\documentclass[12pt]{amsart}

\usepackage{amssymb}
\usepackage{latexsym}
\usepackage{exscale}
\usepackage{lscape}
\usepackage{amsmath}

%

\usepackage[colorlinks=true, pdfstartview=FitV, linkcolor=blue, citecolor=blue, urlcolor=blue]{hyperref}

\textwidth=1.2\textwidth \calclayout

%
%

\newtheorem{theorem}{Theorem}[section]
\newtheorem{lemma}[theorem]{Lemma}
\newtheorem{cor}[theorem]{Corollary}
\newtheorem{prop}[theorem]{Proposition}

\theoremstyle{definition}
\newtheorem{definition}[theorem]{Definition}

\theoremstyle{remark}
\newtheorem{remark}[theorem]{Remark}

\allowdisplaybreaks

\numberwithin{equation}{section}



\newcommand{\R}{\mathbb R}

\newcommand{\Z}{\mathbb Z}


\DeclareMathOperator{\supp}{supp}

\DeclareMathOperator{\diam}{diam}
\DeclareMathOperator*{\essinf}{ess\,inf}
\DeclareMathOperator*{\esssup}{ess\,sup}


\newcommand{\pp}{{p(\cdot)}}

\newcommand{\Lp}{L^{p(\cdot)}}

\newcommand{\qq}{{q(\cdot)}}

\newcommand{\D}{\mathcal{D}}




\def\Xint#1{\mathchoice
   {\XXint\displaystyle\textstyle{#1}}%
   {\XXint\textstyle\scriptstyle{#1}}%
   {\XXint\scriptstyle\scriptscriptstyle{#1}}%
   {\XXint\scriptscriptstyle\scriptscriptstyle{#1}}%
   \!\int}
\def\XXint#1#2#3{{\setbox0=\hbox{$#1{#2#3}{\int}$}
     \vcenter{\hbox{$#2#3$}}\kern-.5\wd0}}

\def\avgint{\Xint-}

\begin{document}

\date{November 5, 2015}

\author{D. Cruz-Uribe, OFS}
\address{Dept. of Mathematics \\ University of Alabama \\
Tuscaloosa, AL 35487}
\email{dcruzuribe@ua.edu}

\author{P. Shukla}
\email{shukla.parantap@gmail.com}

\title[Boundedness of fractional maximal operators]
{The boundedness of fractional maximal operators on
variable Lebesgue spaces over spaces of homogeneous type}

\thanks{The first author is supported by NSF Grant 1362425 and
  research funds from the Dean of the College of Arts \& Sciences,
  University of Alabama.  Earlier parts of the work were supported by
  the Stewart-Dorwart research fund at Trinity College.  The authors
  would like to thank Diego Maldonado for several insightful
  conversations and suggestions.}

\begin{abstract}\label{abstract}
Given a space of homogeneous type $(X,d,\mu)$, we give sufficient
conditions on a variable exponent $\pp$ so that  the
fractional maximal operator $M_\eta$, $0\leq \eta <1$,   maps $\Lp(X)$
to $L^\qq(X)$, where $1/\pp-1/\qq=\eta$.  In the endpoint case we
also prove the corresponding weak type inequality.  As an application we prove
norm inequalities for the fractional integral operator $I_\eta$.  Our
proof for the fractional maximal operator uses the theory of dyadic
cubes on spaces of homogeneous type, and even in the Euclidean setting
it is simpler than existing proofs.  For the fractional integral
operator we extend a pointwise inequality of Welland to spaces of
homogeneous type.  Our work  generalizes results in
\cite{MR2414490,MR2493649} from the Euclidean case and extends recent
work by Adamowicz, {\em et al.}~\cite{MR3322604} on the
Hardy-Littlewood maximal operator on spaces of homogeneous type.
\end{abstract}

\maketitle

\section{Introduction}\label{introduction}

In this paper we study the boundedness of the fractional maximal
operator on variable Lebesgue spaces defined over spaces $(X,d,\mu)$  of
homogeneous type. Variable Lebesgue spaces are Banach
function spaces which generalize the classical Lebesgue spaces;
intuitively, they consist of all measurable functions that satisfy
\[ \int_X |f(x)|^{p(x)}\,d\mu < \infty.  \]
These spaces have been intensively studied for the past twenty
years: see the books~\cite{MR3026953,DHHR} for detailed histories and
references. Such spaces were first studied on $\R^n$ equipped with the
standard Euclidean distance and Lebesgue measure.  However, more
recently there has been interest in working with variable exponent
spaces defined over spaces of homogeneous type.  See, for
instance,~\cite{MR2516251,MR2248828,
  MR2738962,MR2296640,MR3100969,MR2387413}. 

A central problem is to determine sufficient conditions on the exponent
$\pp$ such that the Hardy-Littlewood and fractional maximal operators are
bounded on $\Lp(X)$.  For  a detailed history of this problem in the
Euclidean setting, see~\cite{MR3026953}. This problem 
was first considered on metric measure spaces in~\cite{HHP}.
However, in this and subsequent papers, results were proved with
the (somewhat) unnatural restrictions that either $X$ is bounded or
that the exponent $\pp$ is constant outside a large
ball. Adam\-o\-wicz, Harjulehto and H\"ast\"o~\cite{MR3322604} were
able to remove this restriction and established the boundedness of the 
maximal operator on $\Lp(X)$ on an unbounded quasi-metric space $X$. While 
they did not assume that the underlying measure is doubling, they did assume
that $1/\pp$ satisfies a Diening type condition.
  

In our paper we consider the same problem for the fractional maximal
operator:  given $0\leq \eta <1$, define the operator $M_\eta$ by
\[ M_\eta f(x) = \sup_{B\ni x} \mu(B)^\eta \avgint_B |f|\,d\mu.  \]
When $\eta=0$ this reduces to the Hardy-Littlewood maximal operator
and we write $M$ instead of $M_0$. On classical Lebesgue spaces
over $\R^n$, norm inequalities for $M_\eta$ are well
known.  For variable Lebesgue spaces in the Euclidean case, norm
inequalities for $M_\eta$ were first proved in~\cite{MR2414490} and then
subsequently in \cite{MR2493649}.
Our main results  are  generalizations of the results in \cite{MR2414490,MR2493649} to the setting of spaces
of homogeneous type.   We state them here, though we defer the statement
of precise
definitions for all of our results until Section~\ref{prelim}.  

\begin{theorem}\label{main theorem 1}
Let $(X,d,\mu)$ be a space of homogeneous type.  Given $0\leq \eta<1$, 
let $\pp : X \rightarrow [1,\infty]$ be such that $1/\pp\in LH$ and 
$1<p_-\leq p_+\leq1/\eta$. For each $x\in X$, define $\qq$ 
pointwise by $1/p(x)-1/q(x)=\eta$. 
Then there exists a constant $C=C(\pp,\eta,X)$ such that for all $f
\in \Lp(X)$, 
\begin{equation}\label{main equation 1}
\|M_{\eta}f\|_{\qq}\leq C\|f\|_{\pp}.
\end{equation}
Moreover, if $\mu(X)<+\infty$, then we can replace the hypothesis
that $1/\pp\in LH$ with $1/\pp\in LH_0$.  
\end{theorem}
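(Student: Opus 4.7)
The plan is to reduce $M_\eta$ to a dyadic fractional maximal operator via the adjacent dyadic systems of Hytönen--Kairema, and then to establish the norm inequality through a modular estimate that exploits the hypothesis $1/\pp\in LH$.

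The first step is the dyadic reduction. Since $(X,d,\mu)$ is a space of homogeneous type, there exist finitely many dyadic systems $\D^1,\ldots,\D^K$ such that every ball $B\subset X$ is contained in a cube $Q\in\D^t$ with $\mu(Q)\lesssim\mu(B)$ for some $t$. Consequently
\[
M_\eta f(x)\leq C\sum_{t=1}^K M_\eta^{\D^t} f(x),
\]
so it suffices to prove the inequality for a single dyadic fractional maximal operator $M_\eta^\D$.

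The second step is the dyadic estimate. Normalizing so that $\|f\|_\pp\leq 1$, the goal is to bound $\int_X M_\eta^\D f(x)^{q(x)}\,d\mu$ by a constant. The key input is the norm comparison $\|\chi_Q\|_\pp\approx \mu(Q)^{1/p(x_Q)}$ valid for any $x_Q\in Q$ under the $LH$ hypothesis on $1/\pp$; combined with the generalized Hölder inequality for $\Lp$, this yields
\[
\mu(Q)^\eta \avgint_Q |f|\,d\mu \;\lesssim\; \mu(Q)^{-1/q(x_Q)}\,\|f\chi_Q\|_\pp .
\]
Raising to the power $q(x)$ for $x\in Q$, I would transfer $q(x_Q)$ to $q(x)$ using the $LH$ condition on $1/\pp$: $LH_0$ controls the factor $\mu(Q)^{q(x)(1/q(x)-1/q(x_Q))}$ when $\mu(Q)$ is small, and $LH_\infty$ controls it when $\mu(Q)$ is large. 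This should produce a pointwise estimate of the form
\[
M_\eta^\D f(x)^{q(x)} \;\lesssim\; \bigl(M(|f|^{p(\cdot)/p_-})(x)\bigr)^{q_-/p_-} + G(x),
\]
where $M$ is the ordinary Hardy--Littlewood maximal operator and $G\in L^1(X)$ is a fixed correction independent of $f$. Since $p_+\leq 1/\eta$ forces $q_-/p_-=1/(1-\eta p_-)>1$ (the degenerate case $p_-=p_+=1/\eta$ being handled by plain Hölder), the classical $L^{q_-/p_-}$ bound for $M$ on doubling metric measure spaces applies, and integration against $d\mu$ yields $\int_X M_\eta^\D f(x)^{q(x)}\,d\mu\leq C$, which converts back to \eqref{main equation 1} by homogeneity.

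The main obstacle is producing a single correction $G\in L^1(X)$ that uniformly absorbs the cumulative error from transferring $q(x_Q)$ to $q(x)$ across all scales: at large scales one must use $LH_\infty$ to defeat the unbounded growth of $\mu(Q)$, which is why the condition is placed on $1/\pp$ rather than $\pp$ (this is exactly the exponent that appears on $\mu(Q)$ in the Hölder step). The ``moreover'' clause is then essentially cheap: when $\mu(X)<\infty$ the cubes $Q$ have a uniform upper bound on measure, the $LH_\infty$ ingredient becomes vacuous, and the entire argument goes through with only $LH_0$.
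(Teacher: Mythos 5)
Your dyadic reduction via Proposition~\ref{prelim proposition 1} and the initial H\"older estimate
$\mu(Q)^\eta \textaver{Q}|f|\,d\mu \lesssim \mu(Q)^{-1/q(x_Q)}\|f\chi_Q\|_\pp$ are correct and match the paper's setup, but the core of your argument has two genuine problems.

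First, the proposed exponent $q_-/p_-$ on the right of your pointwise estimate is wrong; the correct exponent is $p_-$. This is already visible for a constant exponent $p(\cdot)\equiv p_0$ (so $q(\cdot)\equiv q_0$), where $|f|^{p(\cdot)/p_-}=|f|$ and the normalization $\|f\|_{p_0}\le 1$ is in force. Then with $x\in Q$ and $M|f|(x)\ge \textaver{Q}|f|\,d\mu$, the claim $(\mu(Q)^\eta\textaver{Q}|f|)^{q_0}\lesssim M|f|(x)^{q_0/p_0}$ reduces, after using $\textaver{Q}|f|\le \mu(Q)^{-1/p_0}$, to controlling $\mu(Q)^{q_0/p_0^2-1}$, which is unbounded either as $\mu(Q)\to 0$ or as $\mu(Q)\to\infty$ unless $q_0=p_0^2$. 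With exponent $p_0$ instead, the analogous computation gives $\mu(Q)^{\eta q_0 - q_0/p_0 + 1}=\mu(Q)^0=1$ identically, using precisely $1/p_0-1/q_0=\eta$. The same scaling check shows why $p_->1$ is the hypothesis that is actually used: the argument needs $L^{p_-}$ boundedness of $M$, not $L^{q_-/p_-}$ boundedness (the latter would spuriously allow $p_-=1$ whenever $\eta>0$, contradicting the need for a separate weak-type result).

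Second, and more structurally, a single pointwise estimate of the form
$M_\eta^\D f(x)^{q(x)}\lesssim M(|f|^{p(\cdot)/p_-})(x)^{p_-}+G(x)$ with a fixed correction $G\in L^1$ independent of $f$ is not available for general $f$ with $\|f\|_\pp\le 1$: the exponent-transfer you invoke via $LH$ works cleanly only when the averaged quantity is bounded by $1$. The paper instead splits $f=f_1+f_2$ with $f_1=f\chi_{\{f>1\}}$ and $f_2=f\chi_{\{f\le 1\}}$. For $f_2$ (which satisfies $M_\eta^\D f_2\le 1$ pointwise) an $LH_\infty$-driven modular comparison via Lemma~\ref{prelim lemma 6} plus the constant-exponent bound $M_\eta^\D:L^{p_\infty}\to L^{q_\infty}$ closes the estimate; this is the part of the argument your sketch resembles. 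For $f_1$, however, the paper runs a Calder\'on--Zygmund decomposition of the superlevel sets $\{M_\eta^\D f_1>C^k\}$ (Lemma~\ref{prelim lemma 3}), uses the crucial facts that $f_1\ge 1$ on its support and $\rho_\pp(f_1)\le 1$ to force the relevant integrals below $1$ so that variable powers $q(x)p_-/q_{jk}$ can be replaced by the constant $p_-$, and only then applies $L^{p_-}$ boundedness of $M^\D$. This CZ step has no analogue in your outline and cannot be subsumed into a single error term $G$. You also do not address the $L^\infty(\Omega_\infty^\qq)$ component of the modular, which requires a separate argument (the paper's inequality~\eqref{main subsection 1 equation 2}).
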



When $\eta=0$, Theorem~\ref{main theorem 1} is a particular case of
the results in~\cite{MR3322604}.  Unlike in this paper we prefer to
work on spaces of homogeneous type.  Further, we do not {\em
  a priori} assume that  
$1/\pp$ satisfies a Diening type condition: rather,  we derive this property as a consequence of 
the log-H\"older continuity of $1/\pp$. Our method of proof is very different from
theirs and generalizes an argument given in \cite{MR2493649} (see also
\cite{MR3026953}) that is 
based on the Calder\'on-Zygmund decomposition in Euclidean spaces.
In order to do so we exploit the theory of dyadic cubes on
spaces of homogeneous type, first introduced by
Christ~\cite{MR1096400} and refined by Hyt\"onen and
Kairema~\cite{MR2901199}.   We want to emphasize that our results are
not simply a translation of this earlier work to the setting of spaces
of homogeneous type:  even in the Euclidean case our proof is a
significant refinement of the one in~\cite{MR2493649}, and in
adapting it to spaces of homogeneous type we needed to overcome
various technical obstacles.

We also prove a weak type inequality in the endpoint case $p_- = 1$.  In
the Euclidean case this was first proved in~\cite{MR2414490}; in the
setting of spaces of homogeneous type it is new, even when $\eta=0$.  

\begin{theorem}\label{weak theorem 1}
  Let $(X,d,\mu)$ be a space of homogeneous type. Given
  $0 \leq \eta < 1$, let $\pp : X \rightarrow \left[1,\infty\right]$
  be such that $1/\pp \in LH$ and $1 = p_- \leq p_+ \leq 1/\eta$. Then
  there exists a positive constant $C=C(\pp,\eta,X)$ such that for all
  $f\in \Lp(X)$,
\begin{equation}\label{weak equation 1}
\underset{t > 0}{\sup}\,t\|\chi_{\{M_{\eta} f > t\}}\|_{\qq} \leq 
C\|f\|_{\pp},
\end{equation}
where $1/\pp - 1/\qq = \eta$. Furthermore, if $\mu(X) < +\infty$, we can replace the hypothesis
that $1/\pp \in LH$ with $1/\pp \in LH_0$.
\end{theorem}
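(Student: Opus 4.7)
The plan is to adapt the dyadic Calder\'on-Zygmund argument used to prove Theorem~\ref{main theorem 1} to the endpoint case $p_-=1$. By the scaling identities $\|cf\|_\pp = c\|f\|_\pp$ and $M_\eta(cf) = c M_\eta f$, I normalize $\|f\|_\pp = 1$, which reduces \eqref{weak equation 1} to the uniform bound
\[
s\,\|\chi_{E_s}\|_\qq \leq C \qquad \text{for all } s > 0,
\]
where $E_s := \{x \in X : M_\eta f(x) > s\}$. Using the Hyt\"onen--Kairema construction of finitely many dyadic grids $\mathcal{D}^1,\dots,\mathcal{D}^N$ on $X$ and the pointwise bound $M_\eta f \leq C \max_k M^{\mathcal{D}^k}_\eta f$, it suffices to prove the inequality with $E_s$ replaced by $\{M^{\mathcal{D}}_\eta f > s\}$ for a single grid $\mathcal{D}$. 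By maximality, this level set is the disjoint union $\bigcup_j Q_j^s$ of dyadic cubes satisfying the Calder\'on-Zygmund condition $s < \mu(Q_j^s)^\eta \avgint_{Q_j^s} |f|\,d\mu$.

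The target inequality $s\|\chi_{E_s}\|_\qq \leq C$ will then follow from the modular estimate
\[
\sum_j \int_{Q_j^s} \left(\frac{s}{C}\right)^{q(x)} d\mu \leq 1,
\]
for a suitable $C = C(\pp,\eta,X)$. I would use the Calder\'on-Zygmund condition to replace the constant $s$ pointwise by $\mu(Q_j^s)^\eta \avgint_{Q_j^s} |f|\,d\mu$, combine this with the algebraic identity $q(x)/p(x) = 1 + \eta q(x)$ and Jensen's inequality, and exploit the log-H\"older continuity of $1/\pp$ to replace $q(x)$ by an averaged value on each $Q_j^s$ up to a controlled error. This reduces the integral on $Q_j^s$ to a modular contribution of $|f|^{p(\cdot)}$ together with a summable geometric tail, following the template already established for Theorem~\ref{main theorem 1}.

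The main obstacle is closing the modular estimate at the endpoint $p_-=1$, where the convexity step available in the strong-type proof breaks down. The substitute I would use is the pointwise bound
\[
|f(y)| \leq |f(y)|^{p(y)} + \chi_{\{|f|<1\}}(y),
\]
which holds because on $\{|f|\geq 1\}$ one has $|f|^{p(y)} \geq |f|^{p_-} = |f|$, while on $\{|f|<1\}$ the second term dominates. Inserting this into the Calder\'on-Zygmund averages splits the contribution on each $Q_j^s$ into a piece controlled by $\rho_\pp(f) \leq 1$ and a residual piece absorbed by the log-H\"older tail; disjointness of $\{Q_j^s\}$ then yields the modular bound uniformly in $s$. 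In the finite-measure case, the same scheme works with $LH$ weakened to $LH_0$, since the tail sums reduce to integration of a bounded function over a set of finite measure.
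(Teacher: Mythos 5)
Your overall scaffolding — normalize $\|f\|_\pp=1$, reduce to a single dyadic grid via the Hyt\"onen--Kairema adjacent systems, apply the Calder\'on--Zygmund decomposition to the level set, then aim for a modular bound uniform in the level — is the same as the paper's. The paper likewise splits at height $1$, but it does so at the level of functions, writing $f=f_1+f_2$ with $f_1=f\chi_{\{f>1\}}$, $f_2=f\chi_{\{f\le 1\}}$, and uses subadditivity of the quasinorm $\sup_t t\|\chi_{\{\cdot>t\}}\|_\qq$ to handle the two pieces separately. For $f_1$ (the piece where $f\ge 1$), your idea that $|f|\le|f|^{p(\cdot)}$ is exactly what the paper uses, and the $p_-(Q_j)=1$ endpoint is handled because for a fixed level $s$ there is no need to invoke boundedness of $M^{\D}$ on $L^{p_-}$: disjointness of the $Q_j$ and $\rho_\pp(f_1)\le1$ close the estimate directly. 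So far your plan is in the right spirit.

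The genuine gap is your treatment of the small part of $f$. Bounding $|f|$ by $|f|^{p(\cdot)}+\chi_{\{|f|<1\}}$ replaces $f_2$ by the indicator function, which discards the size of $f_2$ entirely, and the resulting contribution to the Calder\'on--Zygmund average is
\[
\mu(Q_j^s)^{\eta}\avgint_{Q_j^s}\chi_{\{|f|<1\}}\,d\mu \;\le\; \mu(Q_j^s)^{\eta},
\]
which is \emph{unbounded} as the cubes $Q_j^s$ grow (and they do grow as $s\to 0$). Nothing in the log-H\"older machinery absorbs a factor of $\mu(Q_j^s)^\eta$; Lemma~\ref{prelim lemma 5} controls a decaying tail $(e+d(x,x_0))^{-N}$, not an unbounded power of the cube measure. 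Moreover, even if the residual were bounded, your single-modular scheme $\sum_j\int_{Q_j^s}(s/C)^{q(x)}\,d\mu\le1$ cannot cleanly absorb a sum inside the exponent: splitting $(a+b)^{q(x)}$ costs a factor $2^{q(x)}$, which is unbounded when $q_+$ is large.

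What the paper actually does for the $f_2$ piece is structurally different and cannot be reached from your pointwise bound. It keeps the full function $f_2$ (not its indicator), proves the pointwise estimate $M_\eta^\D f_2\le 1$ by H\"older with exponent $1/\eta$ together with the hypothesis $p_+\le 1/\eta$ and $\rho_\pp(f_2)\le1$, and then only needs $t<1$. The modular bound for this part is then obtained by passing to the constant exponents $p_\infty$, $q_\infty$ via Lemma~\ref{prelim lemma 6}, invoking the constant-exponent weak boundedness $M_\eta^\D:L^{p_\infty}(X)\to L^{q_\infty,\infty}(X)$ from Remark~\ref{prelim remark 2}, and controlling the error terms through Lemma~\ref{prelim lemma 5} and Remark~\ref{prelim remark 1}. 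None of this — the pointwise bound $M_\eta^\D f_2\le1$ (and the role of $p_+\le1/\eta$), the passage to $L^{p_\infty}\to L^{q_\infty,\infty}$, or the $\Omega_\infty^\qq$ case analysis — appears in your plan, and without them the estimate for the small part does not close.
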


As an application of Theorems~\ref{main theorem 1} and~\ref{weak theorem 1} 
we prove strong and weak type norm inequalities for the
fractional integral operator (also referred to as the Riesz potential),
\[ I_\eta f(x) = \int_X
\frac{f(y)}{\mu\big(B(x,d(x,y))\big)^{1-\eta}}\,d\mu(y).  \]
These operators have been extensively studied on spaces of homogeneous
type for constant exponents:
see~\cite{MR3058926} and the references it contains.  In the Euclidean
case, strong and weak type inequalities on variable Lebesgue spaces were proved
in~\cite{MR2414490} (but see the references there for earlier, partial
results).  On spaces of homogeneous type they were considered
in~\cite{MR2248828,MR2738962} when $\mu(X)<+\infty$.  For our results
we need to impose an additional condition on the space $(X,d,\mu)$; we
will discuss this hypothesis in more detail in Section~\ref{prelim}.

\begin{theorem}\label{frac int theorem 1}
Let $(X,d,\mu)$ be a reverse doubling space. Given $0 < \eta < 1$,
let $\pp :X\rightarrow [1,+\infty)$ be such that 
$\pp \in LH$ and $1 < p_- \leq p_+ <1/\eta$. Define $\qq$ by  $1/\pp -
1/\qq = \eta$.   Then there exists $C=C(\pp,\eta,X)$ such that for all $f\in \Lp(X)$, 
\[ \|I_\eta f\|_\qq \leq C\|f\|_\pp. \]
Moreover,  if $\mu(X) < +\infty$, we can replace the hypothesis
that $1/\pp \in LH$ with $1/\pp \in LH_0$.
\end{theorem}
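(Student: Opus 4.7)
The plan is to reduce Theorem~\ref{frac int theorem 1} to Theorem~\ref{main theorem 1} via a pointwise domination of $I_\eta f$ by a geometric mean of two fractional maximal functions at nearby parameters. This is the content of Welland's inequality in the Euclidean setting, and extending it to spaces of homogeneous type is the main technical step.

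First I would establish a Welland-type pointwise estimate: for any $\epsilon > 0$ small enough that $0 \leq \eta - \epsilon$ and $\eta + \epsilon < 1$, there is a constant $C$ such that for every $f \in \Lp(X)$ and a.e.\ $x \in X$,
\begin{equation*}
|I_\eta f(x)| \leq C\, M_{\eta-\epsilon} f(x)^{1/2}\, M_{\eta+\epsilon} f(x)^{1/2}.
\end{equation*}
The idea is to fix $r > 0$ and split the integral defining $I_\eta f(x)$ into the near part over $B(x,r)$ and the far part over $X \setminus B(x,r)$. On the near part, decompose into dyadic annuli $\{y : 2^{-k-1}r < d(x,y) \leq 2^{-k}r\}$; on each annulus bound the kernel by a constant multiple of $\mu(B(x,2^{-k}r))^{-1+\eta}$, extract a factor $\mu(B(x,2^{-k}r))^\epsilon$ and recognize the remaining average as dominated by $M_{\eta+\epsilon} f(x)$. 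Summing requires the series $\sum_k \mu(B(x,2^{-k}r))^\epsilon$ to converge, which is where the doubling property of $\mu$ suffices. On the far part one does the analogous annular decomposition outward, extracting $\mu(B(x,2^{k}r))^{-\epsilon}$; here the series converges precisely by the \emph{reverse} doubling hypothesis, which gives a uniform lower bound on the growth of $\mu(B(x,2^k r))$ as $k \to \infty$. The two pieces yield bounds of the form $C\mu(B(x,r))^\epsilon M_{\eta+\epsilon}f(x) + C\mu(B(x,r))^{-\epsilon} M_{\eta-\epsilon}f(x)$, and optimizing over $r$ produces the geometric mean.

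Next I would choose $\epsilon > 0$ small enough that $1 < p_- \leq p_+ < 1/(\eta+\epsilon)$ and set $q_1(\cdot),q_2(\cdot)$ by
\begin{equation*}
\frac{1}{p(x)} - \frac{1}{q_1(x)} = \eta - \epsilon, \qquad \frac{1}{p(x)} - \frac{1}{q_2(x)} = \eta + \epsilon,
\end{equation*}
so that $\tfrac{1}{2 q_1(x)} + \tfrac{1}{2 q_2(x)} = \tfrac{1}{q(x)}$. Since $1/\pp$ is log-H\"older continuous (globally, or only locally when $\mu(X)<\infty$), so are $1/q_1(\cdot)$ and $1/q_2(\cdot)$; Theorem~\ref{main theorem 1} then applies to both $M_{\eta-\epsilon}$ and $M_{\eta+\epsilon}$. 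Applying the pointwise inequality together with the generalized H\"older inequality on variable Lebesgue spaces (in the form $\|gh\|_\qq \leq C\|g\|_{2q_1(\cdot)}\|h\|_{2q_2(\cdot)}$, applied to $g = M_{\eta-\epsilon}f^{1/2}$ and $h = M_{\eta+\epsilon}f^{1/2}$) gives
\begin{equation*}
\|I_\eta f\|_\qq \leq C \|M_{\eta-\epsilon}f\|_{q_1(\cdot)}^{1/2}\|M_{\eta+\epsilon}f\|_{q_2(\cdot)}^{1/2} \leq C\|f\|_\pp.
\end{equation*}
The same argument, now invoking the $LH_0$ version of Theorem~\ref{main theorem 1}, handles the case $\mu(X)<+\infty$.

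The principal obstacle is the pointwise inequality itself. In $\R^n$ the kernel and the balls admit exact scaling $\mu(B(x,r)) \sim r^n$, so the annular sums collapse to geometric series trivially; on a general space of homogeneous type one only has doubling and (under our assumption) reverse doubling to produce two-sided estimates on $\mu(B(x,2^k r))$. Carefully accounting for the lower doubling exponent to make the far sum converge, and ensuring the estimates are uniform in $x$ so as to survive the optimization over $r$, will be the delicate part of the argument.
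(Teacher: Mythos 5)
Your overall strategy coincides with the paper's: prove a Welland-type pointwise inequality $|I_\eta f(x)| \leq C\,(M_{\eta-\varepsilon}f(x)\,M_{\eta+\varepsilon}f(x))^{1/2}$ on a reverse doubling space, then combine it with Theorem~\ref{main theorem 1} and the generalized H\"older inequality. However, there is a genuine error in your account of where reverse doubling is used, and it would derail the proof as written.

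You claim that for the near part the convergence of $\sum_k \mu(B(x,2^{-k}r))^{\varepsilon}$ is ``where the doubling property of $\mu$ suffices,'' reserving reverse doubling for the far part only. This is backwards and, more importantly, false: doubling only gives the lower mass bound $\mu(B(x,2^{-k}r)) \gtrsim C_\mu^{-k}\mu(B(x,r))$, i.e.\ a lower estimate, which does nothing to make the series converge. Indeed, on a doubling space with an atom at $x$ one has $\mu(B(x,2^{-k}r)) \geq \mu(\{x\}) > 0$ for all $k$ and the series diverges. What is needed is the \emph{upper} bound $\mu(B(x,2^{-k}r)) \leq \gamma^{k}\mu(B(x,r))$ supplied by reverse doubling, which produces the geometric factor $\gamma^{k\varepsilon}$; the paper uses reverse doubling symmetrically for \emph{both} the near sum ($I_1$) and the far sum ($I_2$), and even remarks that this hypothesis is only used in the Welland estimate. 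The same hypothesis (via Lemma~\ref{lemma:no-atoms}, the no-atoms consequence of reverse doubling) is also what justifies the ``optimize over $r$'' step: since $\mu(B(x,r))$ need not take a continuum of values on a general quasi-metric space, one must argue via the supremum of a set of admissible radii and the doubling constant, not by literally solving $\mu(B(x,r)) = (\cdot)$. Separately, you have the roles of $M_{\eta\pm\varepsilon}$ swapped: extracting $\mu(B)^{\varepsilon}$ from $\mu(B)^{\eta}\avgint_B|f|$ leaves $\mu(B)^{\eta-\varepsilon}\avgint_B|f| \leq M_{\eta-\varepsilon}f(x)$, so the near part pairs with $M_{\eta-\varepsilon}$ and the far part with $M_{\eta+\varepsilon}$ — harmless for the final symmetric geometric mean, but it signals the same confusion as the convergence claim. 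Your reduction from the pointwise estimate to the theorem via the exponents $q_1(\cdot),q_2(\cdot)$ and generalized H\"older is fine and is, if anything, more explicit than what the paper itself records.
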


\begin{theorem}\label{frac int theorem 2}
Let $(X,d,\mu)$ be a reverse doubling space. Given $0 < \eta < 1$,
let $\pp :X\rightarrow [1,+\infty)$ be such that 
$\pp \in LH$ and $1 = p_- \leq p_+ <1/\eta$. Define $\qq$ by  $1/\pp -
1/\qq = \eta$.   Then there exists $C=C(\pp,\eta,X)$ such that for all $f\in \Lp(X)$, 

\[
\underset{t > 0}{\sup}\,t\,\|\chi_{\{|I_{\eta} f| > t\}}\|_{\qq} \leq 
C\|f\|_{\pp}. 
\]
Moreover,  if $\mu(X) < +\infty$, we can replace the hypothesis
that $1/\pp \in LH$ with $1/\pp \in LH_0$.
\end{theorem}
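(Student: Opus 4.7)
The plan is to mirror the approach used for Theorem~\ref{frac int theorem 1}, substituting Theorem~\ref{weak theorem 1} for Theorem~\ref{main theorem 1} and replacing the ordinary H\"older inequality on $\Lp$ by its weak-type analogue. Without loss of generality assume $f\ge0$, and fix $\epsilon>0$ with $\epsilon<\min\bigl(\eta,\ 1/p_+-\eta\bigr)$, which is possible since $0<\eta$ and $p_+<1/\eta$. Define $q_1(\cdot),q_2(\cdot)$ pointwise by
\[
\frac{1}{p(x)}-\frac{1}{q_1(x)}=\eta-\epsilon,\qquad
\frac{1}{p(x)}-\frac{1}{q_2(x)}=\eta+\epsilon,
\]
so that, adding and dividing by $2$,
\[
\frac{1}{q(x)} \;=\; \frac{1}{2\,q_1(x)}+\frac{1}{2\,q_2(x)}.
\]
By the choice of $\epsilon$, both $\eta-\epsilon$ and $\eta+\epsilon$ lie in $(0,1)$ and $p_+\le 1/(\eta+\epsilon)$, so that Theorem~\ref{weak theorem 1} applies to $M_{\eta-\epsilon}$ and $M_{\eta+\epsilon}$ at the endpoint $p_-=1$ (the $LH$, respectively $LH_0$, regularity of $1/\pp$ transfers to $1/q_1,1/q_2$ since these differ from $1/\pp$ only by a constant). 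The extension of Welland's pointwise inequality to reverse doubling spaces of homogeneous type, established in the paper on the way to Theorem~\ref{frac int theorem 1}, gives
\[
I_\eta f(x) \;\le\; C\,\bigl(M_{\eta-\epsilon}f(x)\bigr)^{1/2}\bigl(M_{\eta+\epsilon}f(x)\bigr)^{1/2}.
\]

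For any log-H\"older $r:X\to[1,\infty]$ define the weak quasi-norm $\|g\|_{r(\cdot),\infty}:=\sup_{t>0}t\,\|\chi_{\{|g|>t\}}\|_{r(\cdot)}$. Two elementary facts suffice. First, the scaling
\[
\|h^{1/2}\|_{2r(\cdot),\infty}\;=\;\|h\|_{r(\cdot),\infty}^{1/2},
\]
which is immediate from $\{h^{1/2}>t\}=\{h>t^{2}\}$ together with the characteristic-function identity $\|\chi_E\|_{2r(\cdot)}=\|\chi_E\|_{r(\cdot)}^{1/2}$. Second, a H\"older-type inequality for weak variable Lebesgue spaces,
\[
\|FG\|_{q(\cdot),\infty} \;\le\; C\,\|F\|_{2q_1(\cdot),\infty}\,\|G\|_{2q_2(\cdot),\infty}.
\]
Applying these to $F=(M_{\eta-\epsilon}f)^{1/2}$, $G=(M_{\eta+\epsilon}f)^{1/2}$, combining with Welland's inequality, and invoking Theorem~\ref{weak theorem 1}, one obtains
\[
\|I_\eta f\|_{q(\cdot),\infty}
\;\le\; C\,\|M_{\eta-\epsilon}f\|_{q_1(\cdot),\infty}^{1/2}\,\|M_{\eta+\epsilon}f\|_{q_2(\cdot),\infty}^{1/2}
\;\le\; C\|f\|_{\pp},
\]
which is the desired inequality. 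The ``Moreover'' clause for $\mu(X)<+\infty$ follows in exactly the same way, since the $LH_0$ version of Theorem~\ref{weak theorem 1} is available.

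The principal technical obstacle is the H\"older inequality for weak variable Lebesgue spaces. The classical constant-exponent argument, based on the inclusion $\{|FG|>t\}\subset\{|F|>\alpha\}\cup\{|G|>t/\alpha\}$ and optimization of $\alpha>0$, does transplant to variable exponents, but the balancing cannot be performed simply because $\|\chi_E\|_{r(\cdot)}$ is no longer a pure power of $\mu(E)$: one must work at the level of the modular, exploit the log-H\"older regularity inherited by $1/q_1(\cdot)$ and $1/q_2(\cdot)$ from $1/\pp$, and then return to the Luxemburg quasi-norm. An equivalent route is to skip the abstract inequality and decompose $\{|I_\eta f|>t\}\subset\{M_{\eta-\epsilon}f>\alpha\}\cup\{M_{\eta+\epsilon}f>t^2/(C^2\alpha)\}$ directly, choosing $\alpha=\alpha(t,\|f\|_\pp)$ so that applying Theorem~\ref{weak theorem 1} in both the $q_1(\cdot)$- and $q_2(\cdot)$-norms and comparing to the $q(\cdot)$-norm of the corresponding characteristic functions via log-H\"older regularity yields the correct power of $t$; the modular bookkeeping is the same.
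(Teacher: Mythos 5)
Your overall plan coincides with the paper's: Section~\ref{section:fractional} proves only the Welland-type pointwise estimate (Proposition~\ref{Welland}) and then asserts that ``the actual proofs follow exactly as they do in the Euclidean case,'' deferring to \cite{MR2414490}. You set up $q_1(\cdot),q_2(\cdot)$ correctly, the verification that Theorem~\ref{weak theorem 1} applies to both $M_{\eta-\epsilon}$ and $M_{\eta+\epsilon}$ is right (in particular $p_+\le 1/(\eta+\epsilon)$ because $\epsilon<1/p_+-\eta$), and the scaling identity $\|h^{1/2}\|_{2r(\cdot),\infty}=\|h\|_{r(\cdot),\infty}^{1/2}$ is valid here since $q_1,q_2$ are finite everywhere.

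The one step you have not actually supplied is the weak-type H\"older inequality $\|FG\|_{\qq,\infty}\le C\|F\|_{2q_1(\cdot),\infty}\|G\|_{2q_2(\cdot),\infty}$, and while you rightly flag it as the crux, neither route you sketch clearly closes. In the constant-exponent case the optimization of $\alpha$ in $\{FG>t^2\}\subseteq\{F>\alpha\}\cup\{G>t^2/\alpha\}$ works because $\|\chi_E\|_q=\mu(E)^{1/q}$ is a \emph{power} of the measure, so the two terms can be balanced and the right power of $t$ drops out. In the variable setting $\|\chi_E\|_{\qq}$ is not a power of $\mu(E)$, and the Diening-type estimates of Lemma~\ref{prelim lemma 7} and Corollary~\ref{prelim corollary 1} control the oscillation of $q(\cdot)$ only on a single ball or dyadic cube; the level set $\{M_{\eta\mp\epsilon}f>\alpha\}$ is a union of Calder\'on--Zygmund cubes of widely varying sizes, so ``comparing to the $q(\cdot)$-norm of the corresponding characteristic functions via log-H\"older regularity'' is not a one-line step. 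The comparison between $\|\chi_{E_i}\|_{\qq}$ and $\|\chi_{E_i}\|_{q_i(\cdot)}$ has to be run cube by cube through the Calder\'on--Zygmund decomposition with the modular bookkeeping of Theorem~\ref{weak theorem 1}'s proof, and that computation is precisely what the paper is outsourcing to \cite{MR2414490}. You should either carry out that computation or verify that the Euclidean argument in \cite{MR2414490} produces the inequality in the form you invoke before treating it as available.
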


\medskip

As an immediate consequence of Theorem~\ref{frac int theorem 1} we can
prove norm inequalities for other variants of the fractional integral
operator on an Ahlfors
regular space.  We say that $(\Omega,d,\mu)$ is Ahlfors regular if
there exist  constants $C_1,\,C_2,\,Q>0$ such that for every $x\in
\Omega$ and $r>0$
\begin{equation} \label{eqn:ahlfors}
C_1r^Q \leq \mu(B(x,r)) \leq C_2r^Q.  
\end{equation}
The constant $Q$ is referred to as the dimension of the space.

Given $0<\alpha<Q$, define the operators
\[  I_\alpha^* f(x) = \int_X \frac{f(y)}{d(x,y)^{Q-\alpha}}\,d\mu(y) \]
and
\[  I_\alpha^{**} f(x) = \int_X \frac{f(y)d(x,y)^\alpha}{\mu(B(x,d(x,y)))}\,d\mu(y). \]
These operators have also been extensively studied in spaces of
homogeneous type for  constant exponents:  see~\cite{MR3058926,sawyer-wheeden92}.   They have applications to the study of
Sobolev and Poincar\'e inequalities over metric spaces:
see~\cite{MR1336257,MR2569546}.

If $(\Omega,d,\mu)$ is Ahlfors regular, then it is immediate that these operators are pointwise equivalent to
$I_\eta f$ with $\eta=\alpha/Q$,  and strong and weak type norm
inequalities follow from Theorems~\ref{frac int theorem 1}
and~\ref{frac int theorem 2}.  For brevity we only state the
strong type inequality with the (implicit) assumption that $\mu(X)=+\infty$;
precise statements of the other results are left to the interested
reader.  

\begin{cor} \label{cor:variant-frac-int}
Suppose $(X,d,\mu)$ is an Ahlfors regular space with dimension $Q$. Given
$0 < \alpha<Q$, 
let $\pp :X\rightarrow [1,+\infty)$ be such that 
$\pp \in LH$ and $1 < p_- \leq p_+ <Q/\alpha$. Define $\qq$ by  $1/\pp -
1/\qq = \alpha/Q$.   Then there exists $C=C(\pp,Q,\alpha,X)$ such that for all $f\in \Lp(X)$, 
\[ \|I_\alpha^* f\|_\qq \leq C\|f\|_\pp. \]
The same inequality also holds for $I_\alpha^{**}$.
\end{cor}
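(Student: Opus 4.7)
The plan is to reduce everything to Theorem~\ref{frac int theorem 1} by observing that, under Ahlfors regularity, the kernels of $I_\alpha^*$ and $I_\alpha^{**}$ are pointwise comparable to the kernel of $I_\eta$ with $\eta = \alpha/Q$. First I would check the hypotheses. Setting $\eta = \alpha/Q \in (0,1)$, the assumption $p_+ < Q/\alpha$ becomes $p_+ < 1/\eta$, and the relation $1/\pp - 1/\qq = \alpha/Q$ becomes $1/\pp - 1/\qq = \eta$, so the arithmetic conditions of Theorem~\ref{frac int theorem 1} are in force. One also needs that an Ahlfors regular space is reverse doubling, but this is immediate from~\eqref{eqn:ahlfors}: one has $\mu(B(x,\sigma r))/\mu(B(x,r)) \geq (C_1/C_2)\sigma^Q$, which exceeds any prescribed constant once $\sigma$ is chosen large enough.

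The heart of the argument is a pointwise comparison of the three kernels. Applying~\eqref{eqn:ahlfors} to the ball $B(x,d(x,y))$ yields $\mu(B(x,d(x,y))) \approx d(x,y)^Q$, so
\[ \mu\bigl(B(x,d(x,y))\bigr)^{1-\eta} \approx d(x,y)^{Q(1-\eta)} = d(x,y)^{Q-\alpha}, \]
meaning the kernel of $I_\eta$ is comparable to $d(x,y)^{\alpha - Q}$, which is precisely the kernel of $I_\alpha^*$. For $I_\alpha^{**}$ the kernel $d(x,y)^\alpha / \mu(B(x,d(x,y)))$ is likewise $\approx d(x,y)^{\alpha-Q}$. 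Hence, for every nonnegative measurable $f$,
\[ I_\alpha^* f(x) \approx I_\eta f(x) \approx I_\alpha^{**} f(x), \]
with comparability constants depending only on $C_1$, $C_2$, $Q$, $\alpha$; the case of general $f$ follows by applying the comparison to $|f|$.

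Invoking Theorem~\ref{frac int theorem 1} for $I_\eta$ on $\Lp(X)$ and absorbing the comparison constants into $C$ then delivers the desired norm inequality for both $I_\alpha^*$ and $I_\alpha^{**}$. There is no real obstacle here: the entire content of the corollary is the pointwise equivalence of the three kernels, which is forced by the two-sided bound~\eqref{eqn:ahlfors}, and the rest is bookkeeping with the exponents.
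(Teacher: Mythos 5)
The kernel comparison part of your argument is correct and is exactly what the paper has in mind: Ahlfors regularity gives $\mu(B(x,d(x,y)))\approx d(x,y)^Q$, hence all three kernels are comparable to $d(x,y)^{\alpha-Q}$, and the corollary would follow from norm inequalities for $I_\eta$ with $\eta=\alpha/Q$. The arithmetic on the exponents is also fine.

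The gap is the verification that Theorem~\ref{frac int theorem 1} applies, i.e.\ your claim that Ahlfors regularity implies reverse doubling. Your argument shows that $\mu(B(x,\sigma r))/\mu(B(x,r))\geq (C_1/C_2)\sigma^Q$, and then says this ``exceeds any prescribed constant once $\sigma$ is chosen large enough.'' But Definition~\ref{defn:reverse-doubling} is a statement at the fixed scale $\sigma=2$: it requires $\mu(B(x,r/2))\leq\gamma\mu(B(x,r))$ for some fixed $\gamma<1$ and \emph{all} admissible $r$. Ahlfors regularity only yields $\mu(B(x,r/2))/\mu(B(x,r))\leq C_2 2^{-Q}/C_1$, which is not below $1$ unless $C_2/C_1<2^Q$; there is no freedom to ``choose $\sigma$ large'' because the definition has already fixed it. Indeed the paper explicitly warns (in the remark following Remark~\ref{rem:equiv}) that an Ahlfors regular measure ``need not be reverse doubling.'' So you cannot invoke Theorem~\ref{frac int theorem 1} as a black box.

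What Ahlfors regularity does give is the weaker, iterated form of reverse doubling stated in Remark~\ref{rem:equiv}: $\mu(B(x,2^{-i}r))\leq C\gamma^i\mu(B(x,r))$ with $C=C_2/C_1$ and $\gamma=2^{-Q}$. Inspecting the proof of Proposition~\ref{Welland}, the only place reverse doubling enters is to produce geometric decay $(\gamma^\varepsilon)^i$ across the dyadic annuli, and the multiplicative constant $C$ is harmless there; so the Welland estimate, and hence the proof of Theorem~\ref{frac int theorem 1}, goes through verbatim with this weaker hypothesis. That is the route the paper takes. To repair your argument you should replace the (false) reverse doubling claim with this observation and note that the proof of Theorem~\ref{frac int theorem 1} adapts; the kernel comparison and exponent bookkeeping you gave then finish the proof.
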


\begin{remark} 
We can actually prove inequalities for $I_\alpha^*$ or
$I_\alpha^{**}$ assuming that the space is either upper or lower Ahlfors
regular--i.e., that either the righthand or lefthand inequality
in~\eqref{eqn:ahlfors} holds.  Details are left to the interested
reader.
\end{remark}

\medskip

The remainder of this paper is organized as follows.  In
Section~\ref{prelim} we gather together the necessary definitions and
a number of preliminary results about spaces of homogeneous type,
fractional maximal operators and variable Lebesgue spaces.  In
Section~\ref{main} we prove Theorem~\ref{main theorem 1}, and in
Section~\ref{section:weak-type} we prove Theorem~\ref{weak theorem 1}.
Since there are many similarities between the proofs of Theorem~\ref{main theorem 1} and 
Theorem~\ref{weak theorem 1}, we omit overlapping details.
Finally, in Section~\ref{section:fractional} we prove Theorems~\ref{frac int theorem 1} 
and~\ref{frac int theorem 2}.  
Our proof involves extending a pointwise estimate due to Welland \cite{MR0369641} to 
spaces of homogeneous type that relates the fractional integral $I_\eta$ to the 
fractional maximal operator $M_\eta$.
This estimate  is interesting in its own right and should have applications to other
problems on spaces of homogeneous type.  (A related estimate was proved
in~\cite{garcia-cuerva-martell01} where it was used to derive weighted norm
inequalities.)

Throughout this paper our notation is standard and will be defined as
needed.  Hereafter, $C$, $c$  will denote constants whose values
depend only on ``universal'' parameters and whose value may change
from line to line.  In particular, constants may depend on the
underlying triple $(X,d,\mu)$.    We will use the convention that
$1/\infty=0$ and $1/0=\infty$. 

\section{Preliminary results}\label{prelim}

In this section we gather some definitions and 
preliminary results.

\subsection*{Spaces of homogeneous type}
We begin with a definition.  For more information, 
see~\cite{MR2867756, MR1800917}.

\begin{definition}\label{prelim definition 1}
Given a set $X$ and a function
$d\,:\,X \times X\,\rightarrow\,[0,\infty)$, we say that $(X,d)$ is a
quasi-metric space if $d$ satisfies the 
following conditions:
\begin{enumerate}
\item $d(x,y)=0$ if and only if $x=y$; 
\item $d(x,y)=d(y,x)$ for all $x,y\in X$;
\item $d(x,y)\leq A_0 (d(x,z)+d(y,z))$ for all $x,y,z \in X$ 
 and some constant $A_0\geq 1$.
 \end{enumerate}
Property $(3)$ is called the quasi-triangle inequality and 
$A_0$ the quasi-metric constant. 
\end{definition}

\begin{definition}\label{prelim definition 2}
Given a quasi-metric space $(X,d)$ and a positive 
measure $\mu$ that is defined on the $\sigma$-algebra
generated by quasi-metric balls and open sets, 
we say that $(X,d,\mu)$ is a space of homogeneous type
if there exists a constant 
$C_\mu\geq 1$ such that for any $x\in X$ and any $r>0$,
\[
\mu(B(x,2r))\leq C_\mu\,\, \mu(B(x,r)),
\]
where $B(x,r)$ is the ball centred at $x$ with radius $r$. To avoid trivial measures we will always assume that 
$0\,<\,\mu(B)\,<\,+\infty$ for every ball $B$.
\end{definition}

A measure $\mu$ that satisfies the property in Definition \ref{prelim
  definition 2} is called doubling.  The next lemma gives a
consequence of this property referred to as the lower mass bound.  
The proof is well-known and we omit it. 

\begin{lemma}\label{prelim lemma 1}
Let $(X,d,\mu)$ be a space of homogeneous type. Then there exists a
positive constant $C=C(C_\mu,A_0)$ such that for all  $x\in X$,   $0<r<R$ and $y\in B(x,R)$, 
\begin{equation}\label{prelim equation 1}
\frac{\mu(B(y,r))}{\mu(B(x,R))}\geq\,
C\left(\frac{r}{R}\right)^{\log_2\,C_\mu}.
\end{equation}
\end{lemma}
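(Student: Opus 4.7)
The plan is to control $\mu(B(x,R))$ from above by a controlled number of doublings of $\mu(B(y,r))$ and then invert to get the desired lower bound.

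First I would use the quasi-triangle inequality to contain $B(x,R)$ inside a ball centered at $y$. Since $y \in B(x,R)$, for any $z \in B(x,R)$ we have
\[
d(z,y) \leq A_0\bigl(d(z,x) + d(x,y)\bigr) \leq A_0(R + R) = 2A_0 R,
\]
so $B(x,R) \subseteq B(y, 2A_0 R)$, and by monotonicity $\mu(B(x,R)) \leq \mu(B(y, 2A_0 R))$.

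Next I would iterate the doubling property starting from $B(y,r)$. Choose the smallest integer $k$ with $2^k r \geq 2A_0 R$; equivalently, $k = \lceil \log_2(2A_0 R/r) \rceil$, so $k \leq \log_2(4 A_0 R/r)$. Applying Definition~\ref{prelim definition 2} $k$ times yields
\[
\mu\bigl(B(y, 2A_0 R)\bigr) \leq \mu\bigl(B(y, 2^k r)\bigr) \leq C_\mu^{\,k}\,\mu(B(y,r)).
\]
Since $C_\mu^{\,k} \leq C_\mu^{\log_2(4A_0 R/r)} = (4A_0)^{\log_2 C_\mu}\bigl(R/r\bigr)^{\log_2 C_\mu}$, combining the two inequalities gives
\[
\mu(B(x,R)) \leq (4A_0)^{\log_2 C_\mu}\left(\frac{R}{r}\right)^{\log_2 C_\mu} \mu(B(y,r)),
\]
and rearranging yields \eqref{prelim equation 1} with $C = (4A_0)^{-\log_2 C_\mu}$, a constant depending only on $A_0$ and $C_\mu$.

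There is really no hard step here; the only place where one must be slightly careful is in tracking the factor $A_0$ coming from the quasi-triangle inequality (since in a genuine metric space this factor would be $2$ rather than $2A_0$) and in ensuring that the ceiling in the choice of $k$ only worsens the multiplicative constant, not the exponent $\log_2 C_\mu$. Once those bookkeeping points are handled, the estimate follows directly from a single containment plus a geometric iteration of the doubling condition.
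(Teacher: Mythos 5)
Your proof is correct and is precisely the standard argument that the paper is invoking when it says the proof is well-known and omits it: contain $B(x,R)$ in $B(y,2A_0R)$ via the quasi-triangle inequality, then iterate the doubling property $k \approx \log_2(2A_0R/r)$ times and absorb the $4A_0$ factor into the constant. The bookkeeping (using $C_\mu \ge 1$ so that rounding $k$ up only costs a constant, and noting $k \ge 1$ since $R>r$ and $A_0 \ge 1$) is handled correctly, so nothing is missing.
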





The next lemma characterizes spaces of homogeneous type with
finite measure. We refer the reader to 
\cite{MR1430157} for a proof.

\begin{lemma}\label{prelim finite measure}
Let $(X,d,\mu)$ be a space of homogeneous type. Then $\mu(X) < +\infty$ if and only if 
$$\diam(X) = \sup_{x,y\in X} d(x,y) < +\infty.$$
\end{lemma}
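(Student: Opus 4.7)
The plan is to prove the two directions separately. The forward implication, $\diam(X) < +\infty \Rightarrow \mu(X) < +\infty$, is immediate: fixing any $x_0 \in X$, every $y \in X$ satisfies $d(x_0,y) \leq \diam(X)$, so $X \subseteq B(x_0, 2\diam(X))$, and this ball has finite measure by Definition~\ref{prelim definition 2}.

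For the converse I would argue by contrapositive. Suppose $\diam(X) = +\infty$ and fix $x_0 \in X$. The strategy is to derive a one-scale reverse-doubling inequality $\mu(B(x_0, \lambda s)) \geq \gamma\, \mu(B(x_0, s))$ for absolute constants $\lambda > 1$, $\gamma > 1$ and for arbitrarily large $s$. Given a target scale $R$, since $\diam(X) = +\infty$ one can pick $y \in X$ with $r := d(x_0, y) \geq 2A_0 R$, and then set $s := r/(2A_0) \geq R$. Two short applications of the quasi-triangle inequality show that the balls $B(y, s)$ and $B(x_0, s)$ are disjoint and both sit inside $B(x_0, (A_0+1)r) = B(x_0, \lambda s)$, where $\lambda := 2A_0(A_0+1)$. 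Applying Lemma~\ref{prelim lemma 1} with outer ball $B(x_0, \lambda s)$, inner center $y$ and inner radius $s$ produces a constant $c = c(C_\mu, A_0) > 0$, independent of $s$ (since the radius ratio is $1/\lambda$), satisfying $\mu(B(y, s)) \geq c\, \mu(B(x_0, \lambda s))$. Combining this lower bound with the disjoint inclusion yields
\[
\mu(B(x_0, \lambda s)) \geq \mu(B(x_0, s)) + c\, \mu(B(x_0, \lambda s)),
\]
whence $\mu(B(x_0, \lambda s)) \geq (1-c)^{-1} \mu(B(x_0, s))$. This same rearrangement simultaneously forces $c < 1$ (otherwise $\mu(B(x_0, s)) \leq 0$, contradicting positivity of $\mu$ on balls), so $\gamma := (1-c)^{-1} > 1$.

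To finish, suppose for contradiction that $M := \sup_R \mu(B(x_0, R)) < +\infty$. Given any $\epsilon > 0$, choose $R$ with $\mu(B(x_0, R)) > M - \epsilon$ and apply the above estimate at any $s \geq R$ (which is available since $\diam(X) = +\infty$); this gives
\[
M \geq \mu(B(x_0, \lambda s)) \geq \gamma\, \mu(B(x_0, s)) > \gamma(M - \epsilon).
\]
Letting $\epsilon \to 0$ yields $M \geq \gamma M$ with $\gamma > 1$, so $M \leq 0$, contradicting $\mu(B) > 0$. Hence $M = +\infty$, and since $B(x_0, R) \subseteq X$ for every $R$, we conclude $\mu(X) = +\infty$. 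The main technical obstacle is the simultaneous choice of the two radii so that (i) the inner balls are disjoint, (ii) they sit in a common outer ball, and (iii) the inner/outer scale ratio is a universal constant, which is precisely what allows the constant supplied by Lemma~\ref{prelim lemma 1} to be independent of $r$ and thus drive the reverse-doubling step; once these three requirements are reconciled, the rest of the argument is routine.
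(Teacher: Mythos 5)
The paper does not prove this lemma; it simply cites \cite{MR1430157}, so there is no in-paper argument to compare against. Your proof is correct and self-contained, which is a welcome addition. The forward direction is straightforward (modulo the degenerate case $\diam(X)=0$, where $X$ is a single point and you should enlarge the radius to $\diam(X)+1$, say). The contrapositive of the converse is the interesting part, and your construction is sound: with $r=d(x_0,y)\geq 2A_0 R$ and $s=r/(2A_0)$, a quasi-triangle argument gives $B(x_0,s)\cap B(y,s)=\emptyset$ and both balls inside $B(x_0,\lambda s)$ with $\lambda=2A_0(A_0+1)$, and the lower mass bound in Lemma~\ref{prelim lemma 1} with the fixed scale ratio $1/\lambda$ gives a uniform constant $c=C\lambda^{-\log_2 C_\mu}$; disjointness forces $c<1$ and hence a uniform growth factor $\gamma=(1-c)^{-1}>1$ for $\mu(B(x_0,\lambda s))$ versus $\mu(B(x_0,s))$ along a sequence of scales $s\to\infty$. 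The $\varepsilon$-argument at the end cleanly rules out a finite supremum. One point you use without comment is that $\diam(X)=+\infty$ implies $\sup_{y\in X}d(x_0,y)=+\infty$ for the fixed base point; this follows from the quasi-triangle inequality ($d(x,y)\leq A_0(d(x,x_0)+d(x_0,y))$), and while quick it deserves a sentence, since the needed $y$ with $d(x_0,y)\geq 2A_0R$ comes from exactly this fact.
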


\medskip

On Euclidean spaces dyadic cubes play a
fundamental role in harmonic analysis. In particular they let us 
define dyadic versions of various operators. 
Christ~\cite{MR1096400} constructed a system
of sets on a space of homogeneous type which satisfy many of the
essential properties of a system of dyadic cubes in Euclidean
space. His construction was further refined by Hyt{\"o}nen
and Kairema~\cite{MR2901199} and an equivalent formulation was 
given in \cite{MR3302574}. We will use the version from \cite{MR3302574}.  


\begin{theorem}\label{prelim theorem 1}
Let $(X,d,\mu)$ be a space of homogeneous type. There exist constants $C>0$, 
$0<\delta,\,\epsilon<1$ which depend on $X$, a family of sets 
$\D\,=\,\bigcup_{k\in \Z}\,\D_{k}$, and 
a collection of points $\left\{x_c(Q)\right\}_{Q\in \D}$ that satisfy the 
following properties: 

\begin{enumerate}
\item For every $k\in \Z$ the cubes in $\D_{k}$ are pairwise disjoint and
                        $X=\underset{Q\in \D_{k}}{\bigcup}Q$. We
                        will refer to the cubes in $\D_k$ as cubes in
                        the $k$-th generation;

\item If $Q_1,Q_2\in\D$, then either $Q_1\bigcap Q_2= \emptyset$,
                         $Q_1\subseteq Q_2$ or $Q_2\subseteq Q_1$;
\item For any $Q_1\in\D_k$ there exists at least one $Q_2\in\D_{k+1}$,
                          which is called a child of $Q_1$, such that $Q_2\subseteq Q_1$ and 
                          there exists exactly one $Q_3\in\D_{k-1}$, which is called a 
                          parent of $Q_1$, such that $Q_1\subseteq Q_3$;
\item If $Q_2$ is a child of $Q_1$, then $\mu(Q_2)\geq\epsilon\mu(Q_1)$;
\item For every $k$ and $Q\in \D_k$,   $B(x_c(Q),\delta^{k})\subseteq Q\subseteq B(x_c(Q),C\delta^{k})$.
\end{enumerate}
\end{theorem}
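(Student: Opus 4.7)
The plan is to follow the dyadic construction of Christ, with the Hyt\"onen--Kairema refinement, in three stages: build separated nets at each scale, fix a parent map between consecutive generations, and build the cubes by recursive Voronoi refinement so that the partition, nesting, and sandwich properties come out simultaneously.

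First, fix $\delta \in (0,1)$ small, to be chosen later in terms of $A_0$. For each $k \in \Z$, use Zorn's lemma to pick a maximal $\delta^k$-separated set $Z_k = \{z_\alpha^k\}_\alpha \subset X$; maximality gives $d(z_\alpha^k, z_\beta^k) \geq \delta^k$ for $\alpha \neq \beta$ and $X = \bigcup_\alpha B(z_\alpha^k, \delta^k)$. The nets can be arranged to be nested ($Z_k \subseteq Z_{k+1}$) by greedy extension. Fix once and for all a well-ordering of each $Z_k$ (to serve as a tie-breaker), and define a parent map $\pi_{k+1}\colon Z_{k+1} \to Z_k$ by letting $\pi_{k+1}(z_\beta^{k+1})$ be any $z_\alpha^k$ with $z_\beta^{k+1} \in B(z_\alpha^k, \delta^k)$, with ties broken by the ordering; under nested nets, $\pi_{k+1}(z_\alpha^k) = z_\alpha^k$ whenever $z_\alpha^k \in Z_k$, so each center survives forever in the tree.

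Second, construct the cubes recursively. Start at a deep level $k_0 \ll 0$ with Voronoi cells
\[
\widetilde{Q}_\alpha^{k_0} = \{x \in X : z_\alpha^{k_0} \text{ is the closest point of } Z_{k_0} \text{ to } x\},
\]
with ties broken by the ordering, so $\{\widetilde{Q}_\alpha^{k_0}\}_\alpha$ partitions $X$. For $k > k_0$ and each $z_\alpha^k$ with parent $z_{\alpha'}^{k-1}$, refine by running a Voronoi decomposition \emph{inside} $\widetilde{Q}_{\alpha'}^{k-1}$ using only the children $\{z_\beta^k : \pi_k(z_\beta^k) = z_{\alpha'}^{k-1}\}$. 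Properties (1) and (2) are then automatic by construction. To pass to a family independent of $k_0$, one uses the nested structure of the nets: since $Z_k \subseteq Z_{k+1}$, each $z_\alpha^k$ eventually appears at every deeper level, and a diagonal/compactness argument on the tree of cubes shows the tower stabilizes, giving the desired $\D = \bigcup_k \D_k$.

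Third, verify (3)--(5). For the inner inclusion in (5), if $d(x, z_\alpha^k) < c_0 \delta^k$ with $c_0 = 1/(2A_0^2)$, the quasi-triangle inequality and the $\delta^k$-separation of $Z_k$ force $d(x, z_\beta^k) > c_0\delta^k \geq d(x,z_\alpha^k)$ for all $\beta \neq \alpha$, so $x \in \widetilde{Q}_\alpha^k$; for the outer inclusion, maximality of $Z_k$ gives some $z_\beta^k$ with $d(x, z_\beta^k) < \delta^k$, whence the Voronoi-closest $z_\alpha^k$ satisfies $d(x, z_\alpha^k) < \delta^k$, and iterating the quasi-triangle inequality up the tree yields the containment in $B(z_\alpha^k, C\delta^k)$. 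Child existence in (3) follows because $z_\alpha^k \in Z_k \subseteq Z_{k+1}$, so the level-$(k+1)$ cube centered at $z_\alpha^k$ is itself a child of $Q_\alpha^k$; parent uniqueness is built into $\pi_{k+1}$. For (4), the child contains a ball $B(z_\beta^{k+1}, c_0\delta^{k+1})$ while the parent is contained in $B(z_\alpha^k, C\delta^k)$, and applying the lower mass bound of Lemma~\ref{prelim lemma 1} produces $\epsilon = c\,\delta^{\log_2 C_\mu}$. The main technical obstacle is calibrating $\delta$ and the tie-breaking so that the recursive Voronoi refinement genuinely respects the parent map and yields the sandwich property with uniform constants---this is where the smallness of $\delta$ relative to $A_0$ is essential, and it is also the step most sensitive to the quasi-metric (as opposed to metric) setting.
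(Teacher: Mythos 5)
The paper does not prove this theorem; it is quoted verbatim from the literature (Christ, refined by Hyt\"onen--Kairema, in the formulation of Hyt\"onen--Tapiola \cite{MR3302574}), so there is no ``paper's proof'' to compare against. Judged on its own, your sketch captures the correct skeleton of the Hyt\"onen--Kairema construction---nested maximal $\delta^k$-separated nets, a parent map with tie-breaking, the sandwich and measure estimates via the lower mass bound---but two of its load-bearing steps are not sound as stated.

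First, the recursive Voronoi refinement does not by itself deliver the inner sandwich in (5). You argue that if $d(x,z_\alpha^k)<c_0\delta^k$ then $x$ lies in the level-$k$ Voronoi cell of $z_\alpha^k$, but in your construction the actual cube is that Voronoi cell \emph{intersected with the parent cube}, and there is no reason a priori that $B(z_\alpha^k,c_0\delta^k)$ is contained in the parent. Indeed, chasing the quasi-triangle inequality up one generation only gives $d(x,z_{\alpha'}^{k-1})\lesssim A_0\,\delta^{k-1}$, which far exceeds $c_0\delta^{k-1}$, so membership of $x$ in the parent's cube is not automatic. The genuine construction resolves this by setting up the parent order so that the full chain of ancestors of any point near $z_\alpha^k$ passes through $z_\alpha^k,z_{\alpha'}^{k-1},\ldots$; this is a delicate recursive estimate that must be proved, not observed, and it is exactly where the smallness of $\delta$ relative to $A_0$ enters in a quantitative way. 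Second, the passage to a family $\D$ independent of the starting level $k_0$ via a ``diagonal/compactness argument on the tree of cubes'' is a gap: there is no compactness available on a general space of homogeneous type, and the family obtained by refining downward from level $k_0$ genuinely depends on $k_0$. The standard proofs avoid this entirely by fixing the partial order on $\bigcup_k Z_k$ once and for all and defining each cube directly as (the closure/half-open version of) the set of descendants of its center, with no limiting procedure over $k_0$. As written, your step (2) would need to be replaced by this direct definition, after which the verification of (1)--(5) proceeds along the lines you indicate.
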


The collection $\D$ is referred to as a dyadic grid on $X$ and the
sets $Q\in \D$ as dyadic cubes. The last property in Theorem
\ref{prelim theorem 1} permits a comparison between a dyadic cube and
quasi-metric balls; however, we will also need a way to compare a
quasi-metric ball with dyadic cubes. For this reason it is important
to have a finite family of dyadic grids such that an arbitrary
quasi-metric ball is contained in a dyadic cube from one of these
grids. Such a finite family of dyadic grids is referred to as an
adjacent system of dyadic grids.

\begin{theorem}\label{prelim theorem 2}
  Let $(X,d,\mu)$ be a space of homogeneous type. There exists a
  positive integer $K=K(X)$, a finite constant $C=C(X)$, and a finite collection of
  dyadic grids, $\D^{t}$, $1\leq t \leq K$, such that given any
  ball $B=B(x,r)\subseteq X$ there exists $t$ and a dyadic cube
  $Q\in \D^{t}$ such that $B\subseteq Q$ and $\diam{Q}\leq Cr$. 
\end{theorem}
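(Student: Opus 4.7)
The plan is to invoke the $K$-fold family of adjacent dyadic systems of Hyt\"onen and Kairema~\cite{MR2901199}, in the formulation of~\cite{MR3302574}. Conceptually the construction parallels that of Theorem~\ref{prelim theorem 1}, but is carried out $K$ times in parallel with different ``reference shifts,'' so that every quasi-metric ball lies deeply inside a cube of at least one of the shifted grids.

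Concretely, I would fix the constants $\delta, C, \epsilon \in (0,1)$ provided by Theorem~\ref{prelim theorem 1}, and at each level $k \in \Z$ choose a maximal $\delta^k$-separated subset of $X$. By the quasi-triangle inequality, the balls of radius $\delta^k/(2A_0)$ centered at these points are disjoint while the balls of radius $\delta^k$ centered at the same points cover $X$. Each such maximal net, together with a choice of a ``parent function'' at every level assigning level $k+1$ centers to level $k$ centers, produces a dyadic system as in Theorem~\ref{prelim theorem 1}. The Hyt\"onen--Kairema idea is to produce $K=K(X)$ such parent function sequences so that, for every $x \in X$ and every generation $k$, there is at least one $t \in \{1,\ldots,K\}$ whose system $\D^t$ contains a cube of generation $k$ whose center is within distance much smaller than $\delta^k$ from $x$. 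Using the doubling property, $K$ can be bounded in terms of $C_\mu$ and $A_0$.

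Granted these grids, the covering-and-diameter claim is almost automatic. Given $B(x,r) \subseteq X$, choose $k \in \Z$ so that $\delta^{k+1} \leq r < \delta^k$, and then jump to level $k - L$ where $L$ is a fixed integer (depending only on $\delta$, $A_0$, and $C$) chosen large enough that $\delta^{k-L}$ dominates $3A_0^2 r$. By construction there is some $t$ and some $Q \in \D^t_{k-L}$ with $d(x, x_c(Q))$ small relative to $\delta^{k-L}$. Then for any $y \in B(x,r)$, the quasi-triangle inequality gives $d(y, x_c(Q)) < \delta^{k-L}$, so $y \in B(x_c(Q), \delta^{k-L}) \subseteq Q$ by property (5) of Theorem~\ref{prelim theorem 1}; the same property yields $\diam Q \leq 2 A_0 C \delta^{k-L} \leq C' r$.

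The main obstacle is the inductive construction of the $K$ parent function sequences: they must be chosen globally and coherently, so that each $\D^t$ retains the nesting property (2) of Theorem~\ref{prelim theorem 1} while collectively the grids ``see'' every possible configuration of a point relative to nearby centers. This delicate combinatorial construction is the technical heart of~\cite{MR2901199}, and rather than reproducing it I would cite that reference and confine the proof to the verification outlined in the previous paragraph.
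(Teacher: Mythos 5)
Your proposal is correct and mirrors the paper's own treatment: the paper states Theorem~\ref{prelim theorem 2} purely as a citation to the adjacent-dyadic-system construction of Hyt\"onen and Kairema~\cite{MR2901199} (in the equivalent formulation of~\cite{MR3302574}) and offers no proof. You additionally sketch the net-and-parent-function mechanism and the covering verification, and you correctly identify the coherent inductive choice of the $K$ parent-function sequences as the technical core that must be deferred to the same reference, which is exactly the paper's implicit strategy.
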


\subsection*{Reverse doubling and Ahlfors regular spaces}
For our results on fractional integral operators we need to impose an
additional condition on our underlying space.

\begin{definition} \label{defn:reverse-doubling}
Given a space of homogeneous type $(X,d,\mu)$, we say that it is a
reverse doubling space if there exists a constant $0<\gamma<1$ such
that for every $x\in X$ and $r>0$ such that $B(x,r)\subsetneq X$, 
\[ \mu \big(B(x,r/2)\big) \leq \gamma \mu(B(x,r)). \]
If this condition holds we also say that the measure $\mu$ is reverse doubling.
\end{definition}

On Euclidean space any doubling measure is reverse doubling; the same
is true on any metric (as opposed to quasi-metric) space that is
connected:  see~\cite{MR2867756}.  More generally, it holds on any
space of homogeneous type that satisfies a non-empty annuli condition:
for a precise definition and proof, see~\cite{MR2385658}.  

Reverse doubling spaces do not have atoms: this is the content of the
next lemma.

\begin{lemma} \label{lemma:no-atoms}
If $(X,d,\mu)$ is a reverse doubling space, then for all $x\in X$,
$\mu(\{x\})=0$. 
\end{lemma}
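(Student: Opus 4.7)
The plan is to fix $x \in X$ and to apply the reverse doubling condition iteratively to a shrinking sequence of balls centered at $x$, using that $\{x\} \subseteq B(x,r)$ for every $r > 0$.

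First I would observe that, as long as $X$ contains a point other than $x$, there exists some $R > 0$ with $B(x,R) \subsetneq X$: if $y \in X$ with $y \neq x$, then $d(x,y) > 0$, and for any $R$ with $0 < R < d(x,y)$ we have $y \notin B(x,R)$. (The degenerate case $X = \{x\}$ is excluded, since there $\mu(\{x\}) = \mu(X) > 0$ by the standing non-triviality convention on $\mu$, and the lemma would be vacuous.)

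Next, since $B(x, R/2^n) \subseteq B(x,R) \subsetneq X$ for every $n \geq 0$, the reverse doubling condition applies at each scale, and induction on $n$ yields
\[
\mu\bigl(B(x, R/2^n)\bigr) \leq \gamma^n \, \mu\bigl(B(x,R)\bigr).
\]
Since every ball has finite measure in a space of homogeneous type, $\mu(B(x,R)) < +\infty$; combined with $0 < \gamma < 1$, we get $\mu(B(x, R/2^n)) \to 0$ as $n \to \infty$. Because $\{x\} \subseteq B(x, R/2^n)$ for every $n$, monotonicity of $\mu$ gives
\[
\mu(\{x\}) \leq \inf_{n \geq 0} \mu\bigl(B(x, R/2^n)\bigr) = 0.
\]

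The argument is essentially routine; the only step that requires a moment's thought is the existence of a scale $R$ at which reverse doubling is available, which reduces to the mild observation that $X$ has at least two points. No serious obstacle arises.
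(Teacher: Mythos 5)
Your argument is correct and takes a genuinely simpler route than the paper. The paper splits into the cases $\mu(X)=+\infty$ and $\mu(X)<+\infty$: in the first it uses (implicitly, via Lemma~\ref{prelim finite measure}) that $\diam(X)=+\infty$ so every ball is a proper subset of $X$, and in the second it invokes the quasi-triangle inequality to show that any ball of radius less than $(8A_0)^{-1}\diam(X)$ is a proper subset. You replace this dichotomy with the single observation that as soon as $X$ contains a point $y\neq x$, any ball $B(x,R)$ with $R<d(x,y)$ omits $y$, giving a proper ball at which to start the reverse-doubling iteration; this is cleaner and avoids both the case split and the computation with $A_0$. The paper's heavier argument is not wasted, though: the quantitative threshold it establishes (balls of radius below $(8A_0)^{-1}\diam(X)$ are proper) is cited again in the proof of Proposition~\ref{Welland} in the finite-measure case, and your shorter argument would not supply that later reference. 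One small caveat on wording: in the degenerate case $X=\{x\}$ the lemma is not vacuous but false, since a singleton with a point mass satisfies the reverse-doubling definition vacuously (no ball is a proper subset of $X$) while $\mu(\{x\})=\mu(X)>0$; the paper's own proof also tacitly excludes this case, so it is more accurate to say the hypothesis implicitly requires $X$ to contain at least two points rather than to call the statement vacuous.
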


\begin{proof}
If $\mu(X)= +\infty$ then for any $x$ by the definition of reverse doubling
\[ \mu(\{x\}) = \lim_{i \rightarrow \infty} \mu(B(x,2^{-i}))
\leq \lim_{i \rightarrow \infty} \gamma^i \mu(B(x,1)) = 0. \]
Now assume that $\mu(X) < +\infty$ and let $x\in X$. Choose 
\[
0 < r < \frac{\diam(X)}{8A_0}.
\]
By definition there exist points $y,z \in X$ such that $2^{-1}\diam(X) < d(z,y)$.
If both $y$ and $z$ belong to $B(x,r)$, then an application of the quasi-triangle inequality gives

\[
\frac{1}{2} \diam(X) < d(y,z) \leq
A_0(d(x,y) + d(x,z)) < 2 A_0 r < \frac{1}{4}\diam(X),
\]
which is a contradiction. Therefore,  $B(x,r)\subsetneq X$ and we may replace the balls $B(x,2^{-i})$ with the balls 
$B(x,2^{-i}r)$ and repeat the previous argument in order 
to get $\mu(\{x\}) = 0$.

\end{proof}

\begin{remark} \label{rem:equiv}
  Macias and Segovia showed that on any space of homogeneous type
  $(X,d,\mu)$ there exists an equivalent quasi-metric $\rho$ such that
  the quasi-metric balls with respect to $\rho$ are open. Therefore we
  could have assumed from the outset that our $\sigma$-algebra is the
  Borel algebra and that $\mu$ is a positive Borel measure which is
  doubling. The definition of the reverse doubling condition would
  need to be changed slightly: there exist constants $C>0$ and $0<\gamma<1$ such that
  for any ball $B(x,r)\subsetneq X$ and any $i\geq 1$
\[
\mu(B(x,2^{-i}r))\leq
C\gamma^{i}
\mu(B(x,r)).
\]
For further details on this perspective, see~\cite{MR3183648}.
The proofs we give Section~\ref{section:fractional} go through with
essentially no change using this definition of reverse doubling
and we leave the details to the interested reader. 
\end{remark}

\medskip

\begin{remark}
If the space $(X,d,\mu)$ is Ahlfors regular, then it is immediate that
the measure $\mu$ is doubling.  It need not be reverse doubling,
but it does satisfy the weaker condition in Remark~\ref{rem:equiv}.
Given this, the proof of Corollary~\ref{cor:variant-frac-int} is a
straightforward modification of the proof of Theorem~\ref{frac int
  theorem 1} and so is omitted.
\end{remark}

\subsection*{Fractional maximal operators}
 We begin by restating the definition given in the Introduction.
Given a set $E$, $\mu(E)>0$, we will use the notation
\[ \avgint_E f\,d\mu = \frac{1}{\mu(E)}\int_E f\,d\mu. \]

\begin{definition}\label{prelim definition 3}
  Given a space of homogeneous type $(X,d,\mu)$ and $0\leq \eta < 1$,
 define the fractional maximal operator of
  order $\eta$ acting on  $f\in L^1_{loc}(X)$ by 
\[ M_{\eta}f(x)=
\sup_{B\ni x}\mu(B)^{\eta}\avgint_{B}|f|\,d\mu, \]
where the supremum is taken over all balls which contain the point $x$.  
When $\eta=0$
we write $M$ instead of $M_0$.  
\end{definition}

\begin{definition}\label{prelim definition 4}
Given a space of homogeneous type $(X,d,\mu)$, a
dyadic grid $\D$ on $X$ and $0\leq \eta < 1$, the 
dyadic fractional maximal operator of order $\eta$ with 
respect to $\D$ is defined by
\[
M^{\D}_{\eta}f(x)=
\underset{Q\in \D}{\sup_{x\in Q}}\mu(Q)^{\eta}
\avgint_{Q}|f|\,d\mu,
\]
When $\eta=0$ we write $M^\D$ instead of $M_0^\D$.  
\end{definition}

\medskip

Theorem \ref{prelim theorem 2} yields a pointwise comparison between
the fractional maximal operator and the dyadic fractional maximal
operators associated with an adjacent system of dyadic cubes.  The
following result is proved in~\cite{MR2901199,MR3058926}.

\begin{prop}\label{prelim proposition 1}
Given a space of homogeneous type  $(X,d,\mu)$,  let
$\{\D^{t}\}$ be the adjacent dyadic system from Theorem \textup{\ref{prelim  theorem 2}}.  Fix $0\leq \eta <1$.  
Then there exists a constant $C=C(\eta)\geq 1$ such that for
all $f\in L^1_{loc}(X)$, 
\[
M^{\D^{t}}_{\eta}f(x)\leq M_{\eta}f(x)
\hspace{14pt} \mbox{and} \hspace{14pt}
M_{\eta}f(x)\leq C\underset{t=1}{\overset{K}{\sum}}
M^{\D^{t}}_{\eta}f(x).
\]
\end{prop}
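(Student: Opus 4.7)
My plan is to prove the two inequalities independently. Both follow from the comparability between dyadic cubes and quasi-metric balls furnished by Theorem~\ref{prelim theorem 1}(5) and Theorem~\ref{prelim theorem 2}, combined with the doubling property in the form of the lower mass bound (Lemma~\ref{prelim lemma 1}) or, more simply, direct application of the doubling condition to comparable balls.

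For the first inequality $M_\eta^{\D^t}f(x)\leq M_\eta f(x)$ (up to a constant that can be absorbed into the definition of $C$), fix $Q\in \D^t_k$ with $x\in Q$. By Theorem~\ref{prelim theorem 1}(5), there is a point $x_c(Q)$ such that
\[
B(x_c(Q),\delta^k)\subseteq Q\subseteq B(x_c(Q),C\delta^k)=:B.
\]
Then $x\in B$, so $B$ is an admissible ball in the definition of $M_\eta f(x)$. By iterated doubling, $\mu(B)\leq C'\mu(B(x_c(Q),\delta^k))\leq C'\mu(Q)$, and $\mu(Q)\leq \mu(B)$ since $Q\subseteq B$. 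Writing $\mu(Q)^{\eta}\avgint_Q|f|\,d\mu=\mu(Q)^{\eta-1}\int_Q|f|\,d\mu$, extending the integral to $B$, and replacing $\mu(Q)^{\eta-1}$ by $\mu(B)^{\eta-1}$ at the cost of a factor $C'^{\,1-\eta}$ (this is where $0\le\eta<1$ enters), I obtain $\mu(Q)^\eta\avgint_Q|f|\,d\mu\leq C''\mu(B)^\eta\avgint_B|f|\,d\mu\leq C''M_\eta f(x)$. Taking the supremum over $Q\ni x$ gives the first inequality.

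For the second inequality, take any ball $B=B(x,r)$ with $x\in B$. By Theorem~\ref{prelim theorem 2} there exist $t\in\{1,\dots,K\}$ and $Q\in \D^t$ with $B\subseteq Q$ and $\diam Q\leq Cr$. Since $x\in B\subseteq Q$, the cube $Q$ is admissible for $M_\eta^{\D^t}f(x)$. The key step is to show $\mu(Q)\leq C'''\mu(B)$: by the quasi-triangle inequality, any $y\in Q$ satisfies $d(x,y)\leq \diam Q\leq Cr$, so $Q\subseteq B(x,Cr)$, and iterated doubling yields $\mu(Q)\leq \mu(B(x,Cr))\leq C'''\mu(B)$. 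Combined with $\mu(B)\leq \mu(Q)$ and the same manipulation as above (again using $0\le\eta<1$), I get
\[
\mu(B)^\eta\avgint_B|f|\,d\mu\leq \mu(B)^{\eta-1}\int_Q|f|\,d\mu \leq C''''\mu(Q)^\eta\avgint_Q|f|\,d\mu \leq C''''M_\eta^{\D^t}f(x).
\]
Taking the supremum over all balls $B\ni x$ and estimating the chosen $M_\eta^{\D^t}f(x)$ by the sum over $t=1,\dots,K$ completes the proof.

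There is no substantive obstacle here; the only subtlety is keeping the exponents straight when passing between $\mu(Q)^{\eta-1}$ and $\mu(B)^{\eta-1}$, which is precisely where the hypothesis $0\le\eta<1$ is used (so that comparability $\mu(Q)\asymp \mu(B)$ transfers to comparability of $\mu(Q)^{\eta-1}\asymp \mu(B)^{\eta-1}$ with constants depending on $\eta$). Everything else is a bookkeeping exercise with the doubling constant and the constants from Theorems~\ref{prelim theorem 1} and~\ref{prelim theorem 2}.
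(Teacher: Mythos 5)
Your proof is correct in substance and follows what is essentially the standard route (comparable-ball containment from Theorem~\ref{prelim theorem 1}(5), the covering by a dyadic cube of comparable diameter from Theorem~\ref{prelim theorem 2}, and the doubling property to move the measure factor between the cube and the comparable ball). Note that the paper itself does not prove this proposition but cites \cite{MR2901199,MR3058926}, so there is no in-paper argument to compare against; your construction is the natural one those references use.

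One point deserves to be flagged explicitly, and you correctly sensed it yourself with your parenthetical remark. The proposition as written asserts $M^{\D^t}_\eta f(x)\leq M_\eta f(x)$ with \emph{no} constant, yet the argument you give — and indeed any argument working from the definitions in Definitions~\ref{prelim definition 3} and~\ref{prelim definition 4}, where both the ball average and the cube average carry their own measure to the power $\eta-1<0$ — only yields $M^{\D^t}_\eta f(x)\leq C' M_\eta f(x)$, with $C'$ coming from doubling applied between $B(x_c(Q),\delta^k)$ and $B(x_c(Q),C\delta^k)$. A constant-free inequality would require, for each cube $Q\ni x$, a ball $B\ni x$ with $\mu(Q)^{\eta-1}\int_Q|f|\leq \mu(B)^{\eta-1}\int_B|f|$, and since one must take $B\supseteq Q$ (to control $\int_Q$ by $\int_B$), the factor $\mu(B)^{\eta-1}\leq\mu(Q)^{\eta-1}$ points the wrong way. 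The likely resolution is that the cited references define the dyadic fractional maximal operator using the enclosing ball $B(x_c(Q),C\delta^k)$ rather than $Q$ itself for the $\eta$-weight, in which case the first inequality is literally trivial; with this paper's definition only the two-sided comparability with constants holds, which is all that is used anywhere downstream (Propositions and Theorems in Sections~\ref{main}--\ref{section:fractional} only need $M_\eta\lesssim\sum_t M^{\D^t}_\eta$ and the boundedness of each $M^{\D^t}_\eta$). So your proof is adequate for the paper's purposes; you were right not to chase the sharp form.

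Two small technical remarks worth making precise if you write this up: in the second inequality, when you pass from $d(x,y)\leq\diam Q\leq Cr$ to $Q\subseteq B(x,Cr)$, you need an open ball of radius strictly larger than $\diam Q$, so take $B(x,2Cr)$ say; and the ``iterated doubling'' constants should be written as $C_\mu^{\lceil \log_2 C\rceil}$ (and similarly in the first half) so the dependence of the final $C$ on $\eta$, $C_\mu$, $A_0$ and the geometric constants in Theorems~\ref{prelim theorem 1} and~\ref{prelim theorem 2} is explicit.
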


We will need a variant of the classical Calderon-Zygmund
decomposition adapted to spaces of homogeneous type. The 
proof is essentially the same as in the Euclidean case and we refer the
reader to \cite{MR3302574} for further details.

\begin{lemma}\label{prelim lemma 3}
  Given a space of homogeneous type $(X,d,\mu)$  such that $\mu(X)=+\infty$, let $\D$ be a dyadic
  grid on $X$.  Fix $0\leq \eta < 1$. Let
  $f\in L^{1}_{loc}(X)$ be a function such that
  $\mu(Q)^\eta\avgint_{Q}|f|\,d\mu\rightarrow0$ as $\mu(Q)\rightarrow\infty$
  where $Q\in \D$. Then for each $\lambda>0$, there exists a set of
  pairwise disjoint dyadic cubes $\{Q_{j}\}$ and a constant $C=C(X,\D)>1$ such
  that
\[
\{x\in X:M^{\D}_{\eta}f(x)>\lambda\}=
\underset{j}{\bigcup}Q_{j},
\]
and 
\[
\lambda<
\mu(Q_{j})^{\eta}\avgint_{Q_{j}}|f|\,d\mu \leq C\lambda.
\]
If $\mu(X)<+\infty$, then the same conclusion holds for all
$\lambda>\mu(X)^\eta \avgint_X |f|\,d\mu$.
\end{lemma}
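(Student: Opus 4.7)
The plan is to adapt the classical Calderón--Zygmund stopping-cube argument, using the dyadic structure from Theorem \ref{prelim theorem 1}. For every $x$ with $M^\D_\eta f(x)>\lambda$, by definition of the supremum there is at least one dyadic cube $Q\ni x$ with $\mu(Q)^\eta\avgint_Q|f|\,d\mu>\lambda$. I claim that among the cubes in $\D$ containing $x$ with this property there is a \emph{largest} one. Indeed, in a nested chain $Q^{(1)}\subsetneq Q^{(2)}\subsetneq\cdots$ through $x$ the measures $\mu(Q^{(k)})$ tend to $+\infty$: property (5) shows that a cube in $\D_{-k}$ contains a ball of radius $\delta^{-k}$, and since $\mu(X)=+\infty$, Lemma \ref{prelim finite measure} together with the doubling property forces such balls to have unbounded measure as $k\to\infty$. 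Hence the hypothesis $\mu(Q)^\eta\avgint_Q|f|\,d\mu\to 0$ as $\mu(Q)\to\infty$ forces the averages along the chain to drop below $\lambda$. Let $\{Q_j\}$ be the (at most countable) collection of all maximal dyadic cubes satisfying $\mu(Q_j)^\eta\avgint_{Q_j}|f|\,d\mu>\lambda$.

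Pairwise disjointness of $\{Q_j\}$ is immediate from property (2), since two maximal cubes cannot be strictly nested. For the set identity, the inclusion $\bigcup_j Q_j\subseteq\{M^\D_\eta f>\lambda\}$ is built into the definition of $M^\D_\eta$, while if $M^\D_\eta f(x)>\lambda$ then any witnessing cube $Q\ni x$ is contained in some maximal cube $Q_j$ by the preceding claim, whence $x\in Q_j$. For the upper bound, let $\widehat Q_j$ be the dyadic parent of $Q_j$ (property (3)); by maximality, $\mu(\widehat Q_j)^\eta\avgint_{\widehat Q_j}|f|\,d\mu\leq \lambda$. Applying property (4) in the form $\mu(\widehat Q_j)\leq \epsilon^{-1}\mu(Q_j)$ gives
\[
\mu(Q_j)^\eta\avgint_{Q_j}|f|\,d\mu
\;\leq\; \mu(Q_j)^{\eta-1}\!\int_{\widehat Q_j}|f|\,d\mu
\;=\; \Bigl(\tfrac{\mu(\widehat Q_j)}{\mu(Q_j)}\Bigr)^{1-\eta}\mu(\widehat Q_j)^\eta\avgint_{\widehat Q_j}|f|\,d\mu
\;\leq\; \epsilon^{\eta-1}\lambda,
\]
so we may take $C=\epsilon^{\eta-1}\leq \epsilon^{-1}$, which depends only on $X$ and $\D$.

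The finite-measure case is handled identically once one observes that in a dyadic system on $X$ with $\mu(X)<+\infty$ the ancestors of any cube eventually exhaust $X$, so the only obstruction to the stopping procedure is that a maximal cube might equal $X$ itself (which has no parent). The hypothesis $\lambda>\mu(X)^\eta\avgint_X|f|\,d\mu$ rules this out, so every selected $Q_j$ has a parent $\widehat Q_j\in\D$ and the estimate above applies verbatim. The main obstacle in the whole argument is really the existence of maximal stopping cubes: in $\R^n$ the volume of dyadic cubes at generation $-k$ grows as $2^{nk}$ by elementary geometry, but in the abstract setting we must extract this growth from properties (4) and (5) of Theorem \ref{prelim theorem 1} together with the nontriviality of $\mu$ on balls. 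Once this is in place, the rest is a routine transcription of the Euclidean proof.
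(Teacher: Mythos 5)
Your proof follows the standard Calder\'on--Zygmund stopping-time argument, which is exactly what the paper has in mind: the paper does not give the proof itself but refers to \cite{MR3302574} and calls it ``essentially the same as in the Euclidean case.'' Your selection of the maximal stopping cubes, disjointness via the nesting property (2), the set identity, and the upper bound via the parent cube and property (4) of Theorem~\ref{prelim theorem 1} (yielding $C=\epsilon^{\eta-1}\leq\epsilon^{-1}$) are all correct, as is the observation that $\lambda>\mu(X)^\eta\avgint_X|f|\,d\mu$ prevents $X$ itself from being a stopping cube in the finite-measure case.

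The one place you should be more careful is the very step you flag as the crux: why $\mu(Q^{(k)})\to\infty$ along a strictly increasing chain of dyadic cubes through $x$ when $\mu(X)=+\infty$. As written, your argument reads as though ``radius $\to\infty$ plus doubling plus $\mu(X)=+\infty$'' were enough to force the measures of the inscribed balls to infinity. That implication is false in general: on $\R$ with $d\mu = |x|^\alpha\,dx$ for $-1<\alpha<-1/2$ one has a space of homogeneous type with $\mu(\R)=+\infty$, yet $\mu(B(k^2,k))\sim 2k^{2\alpha+1}\to 0$. What actually saves the argument is that every cube $Q^{(k)}$ in the chain contains the fixed point $x$, so by property (5) its center satisfies $d(x,x_c(Q^{(k)}))<C\delta^{-k}$, i.e.\ the centers drift only a bounded multiple of the inscribed radius. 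One should then invoke the lower mass bound, Lemma~\ref{prelim lemma 1}, with $r=\delta^{-k}$ and $R=2A_0C\delta^{-k}$ to get
\[
\mu(Q^{(k)})\;\geq\;\mu\big(B(x_c(Q^{(k)}),\delta^{-k})\big)\;\geq\;c\,\mu\big(B(x,2A_0C\delta^{-k})\big)\;\longrightarrow\;\mu(X)=+\infty,
\]
with $c=c(A_0,C,C_\mu)>0$ independent of $k$. This is the genuinely non-Euclidean ingredient and deserves to be spelled out rather than left to ``the doubling property.'' With that repair the proof is complete.
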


Finally, we need two results which were proved in \cite{MR2493649} in
the Euclidean case; the proofs in spaces of homogeneous type are
identical and so we omit them.  The first is a pointwise approximation
theorem.   

\begin{lemma}\label{prelim lemma 9}
  Given a space of homogeneous type $(X,d,\mu)$, let $\D$ be a dyadic
  grid on $X$.  Fix $0\leq \eta < 1$.  Let $f_N$ be a sequence of
  non-negative functions that increases pointwise a.e. to a function
  $f$. Then the functions $M^{\D}_{\eta}f_N$ increase to
  $M^{\D}_{\eta}f$ pointwise. The same is true if we replace
  $M^\D_\eta$ by $M_\eta$.
\end{lemma}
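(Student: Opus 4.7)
The plan is to reduce everything to the monotone convergence theorem applied on individual dyadic cubes (or balls), and then take suprema.

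First I would establish the easy direction, namely $\lim_N M^{\D}_\eta f_N(x) \leq M^{\D}_\eta f(x)$. Since $0 \leq f_N \leq f_{N+1} \leq f$ almost everywhere, for every dyadic cube $Q$ we have $\avgint_Q f_N \, d\mu \leq \avgint_Q f_{N+1}\, d\mu \leq \avgint_Q f\, d\mu$ (the null set on which monotonicity fails does not affect the integrals). Multiplying by $\mu(Q)^\eta$ and taking the supremum over $Q \in \D$ with $x \in Q$ shows that $M^\D_\eta f_N(x)$ is non-decreasing in $N$ and bounded above by $M^\D_\eta f(x)$.

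For the reverse direction, fix $x \in X$ and an arbitrary cube $Q \in \D$ with $x \in Q$. By the monotone convergence theorem applied to the increasing sequence $\{f_N \chi_Q\}$,
\[
\int_Q f_N\, d\mu \;\longrightarrow\; \int_Q f\, d\mu \qquad \text{as } N \to \infty,
\]
and since $\mu(Q)$ is a fixed finite positive number, we get $\mu(Q)^\eta \avgint_Q f_N \, d\mu \to \mu(Q)^\eta \avgint_Q f \, d\mu$. Because $M^\D_\eta f_N(x) \geq \mu(Q)^\eta \avgint_Q f_N \, d\mu$ for every $N$, passing to the limit yields
\[
\lim_{N\to\infty} M^\D_\eta f_N(x) \;\geq\; \mu(Q)^\eta \avgint_Q f \, d\mu.
\]
Now I would take the supremum over all $Q \in \D$ with $x \in Q$ on the right-hand side to obtain $\lim_N M^\D_\eta f_N(x) \geq M^\D_\eta f(x)$, which combined with the easy direction gives the desired pointwise convergence and monotonicity.

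For the final assertion about $M_\eta$, I would simply repeat the argument verbatim with dyadic cubes $Q$ replaced by quasi-metric balls $B$; the only property used is that $\mu$ is finite and positive on each ball and that monotone convergence applies on each such set, both of which are built into the definition of a space of homogeneous type. There is no real obstacle here: the whole content of the lemma is monotone convergence, combined with the elementary fact that the supremum of pointwise limits of an increasing family of functions equals the limit of the suprema. The only mild care needed is to handle the almost-everywhere (rather than everywhere) monotonicity of $\{f_N\}$, which is harmless since it does not affect integrals over any measurable set.
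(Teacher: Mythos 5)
Your proof is correct and is essentially the standard argument: the paper omits its own proof, citing the Euclidean version in \cite{MR2493649}, and that argument is exactly the two-step combination of (i) monotonicity of the averages giving $\lim_N M^\D_\eta f_N \leq M^\D_\eta f$, and (ii) monotone convergence on each fixed cube (or ball) followed by a supremum to get the reverse inequality. The one hypothesis you implicitly use --- that each dyadic cube $Q$ has $0<\mu(Q)<\infty$ --- is guaranteed by property (5) of Theorem \ref{prelim theorem 1} together with the standing assumption that balls have finite positive measure, so there is no gap.
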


The second will let us compare the fractional maximal operator to the
Hardy-Littlewood maximal operator.  

\begin{lemma}\label{prelim lemma 10}
  Fix $0\leq \eta < 1$ and suppose $r$ and $s$ satisfy $1<r<1/\eta$
  and $1/r-1/s=\eta$. Then for every set $E$ of finite measure and for
  every non-negative function $f$,
\begin{equation}\label{prelim equation 12}
\mu(E)^{\eta}\avgint_{E}f\,d\mu\leq
\left(\int_Ef^{r}\,d\mu\right)^{\frac{1}{r}-\frac{1}{s}}
\left(\avgint_{E}f\,d\mu\right)^{\frac{r}{s}}.
\end{equation}
\end{lemma}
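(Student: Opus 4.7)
\emph{Proof plan.} The plan is to show that, once the parameters are normalized using the relation $1/r-1/s=\eta$, the claimed estimate is just Jensen's inequality (equivalently, H\"older's inequality) on the probability space $(E,\,d\mu/\mu(E))$ applied to the convex function $t\mapsto t^r$ for $r>1$.

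First I would handle the trivial case $\eta=0$: then $r=s$, the exponent $1/r-1/s$ equals zero so the first factor on the right is $1$, and $r/s=1$, so both sides of \eqref{prelim equation 12} reduce to $\avgint_E f\,d\mu$. For $\eta>0$, I would use $1/r-1/s=\eta$ to rewrite $r/s=1-r\eta$, noting that the hypothesis $r<1/\eta$ guarantees $r\eta\in(0,1)$ so these exponents are meaningful. Writing
\[
A=\avgint_E f\,d\mu,\qquad B=\avgint_E f^r\,d\mu,
\]
and using $\int_E f^r\,d\mu=\mu(E)B$, the right-hand side of \eqref{prelim equation 12} becomes
\[
\bigl(\mu(E)B\bigr)^{\eta}A^{1-r\eta}=\mu(E)^{\eta}B^{\eta}A^{1-r\eta},
\]
so after cancelling the common factor $\mu(E)^{\eta}$ the inequality is equivalent to $A\leq B^{\eta}A^{1-r\eta}$, i.e., $A^{r\eta}\leq B^{\eta}$, i.e.,
\[
\left(\avgint_E f\,d\mu\right)^{r}\leq \avgint_E f^r\,d\mu.
\]
This is exactly Jensen's inequality for $t\mapsto t^r$ on the probability measure $d\mu/\mu(E)$ on $E$, or equivalently H\"older's inequality with exponents $r$ and $r'=r/(r-1)$ applied to the decomposition $f=f\cdot 1$.

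The degenerate cases are immediate: if $\avgint_E f\,d\mu=0$ then $f=0$ a.e.\ on $E$ and both sides of \eqref{prelim equation 12} vanish, while if $\int_E f^r\,d\mu=+\infty$ the right-hand side is infinite and there is nothing to prove. Thus I do not anticipate a real obstacle; the entire content of the lemma is Jensen's inequality, and the only thing that requires any care is keeping track of the three exponents $\eta$, $r\eta$, and $1-r\eta$ that appear after the substitution $r/s=1-r\eta$.
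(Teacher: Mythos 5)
The paper cites \cite{MR2493649} and omits the proof, so there is no in-text argument to compare against, but your proof is correct and is essentially the standard one: after using $1/r-1/s=\eta$ to rewrite the exponents as $\eta$ and $1-r\eta$, the inequality cancels down to $\bigl(\textaver{E}\,f\,d\mu\bigr)^{r}\leq \textaver{E}\,f^{r}\,d\mu$, which is Jensen (equivalently H\"older with exponents $r,r'$). Your handling of the edge cases ($\eta=0$, $\textaver{E}\,f\,d\mu=0$, $\int_E f^r\,d\mu=\infty$) is also fine.
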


\begin{remark}\label{prelim remark 2}
Fix a dyadic grid $\D$ and $0<\eta<1$.  Given $f\in L^{r}(X)$,  where $r$ and $s$ are as in
Lemma \ref{prelim lemma 10}, let $x\in X$ and $Q\in \D$ be such that $x\in Q$. If we let $E=Q$ in inequality \eqref{prelim equation 12} and take 
the supremum over all such dyadic cubes, we get 
\begin{equation}\label{prelim equation 13}
M^{\D}_{\eta} f(x)^{s}\leq \|f\|_{r}^{s-r} M^{\D} f(x)^{r}.
\end{equation}
If we further assume that $f\in L^{\infty}(X)$,  then for every $x\in X$
\begin{equation}\label{prelim equation 14}
M^{\D}_{\eta} f(x)^{s}\leq
\|f\|_{r}^{s-r} \|f\|_{\infty}^{r}<+\infty.
\end{equation}

We can actually say more.  By Lemma~\ref{prelim lemma 9} and
Marcinkiewicz interpolation, a standard argument shows that 
$M^{\D} : L^1(X)\rightarrow L^{1,\infty}(X)$ and $M^{\D}$ is
bounded operator on  $L^{r}(X)$ when $r>1$.  In this latter case we
immediately have that
\[
\|M^{\D}_{\eta} f\|_{s}^{s}\leq
\|f\|_{r}^{s-r} \|M^{\D} f\|_{r}^{r}\leq
C \|f\|_{r}^{s}.
\]
In other words, $M^{\D}_{\eta} : L^{r}(X) \rightarrow L^{s}(X)$ is a
bounded operator. 

When $r = 1$ we have that $M^{\D}_{\eta} : L^{r}(X) \rightarrow
L^{s,\infty}(X)$.  (If $r>1$ this follows at once from Chebyshev's
inequality and the strong type inequality.)  If $\eta=0$, this was
noted above.  If $\eta>0$, then by Lemma \ref{prelim lemma 3} there
exist disjoint dyadic cubes $\{Q_j\}$ such that 
\[ 
\mu(\{M^{\D}_{\eta} f > t\})^{1-\eta} = \bigg(\underset{j}{\sum} \mu(Q_j)\bigg)^{1-\eta} \leq
\underset{j}{\sum} \mu(Q_j)^{1-\eta},
\]
where
\[
\mu(Q_j)^{1-\eta} < \frac{1}{t} \int_{Q_j} |f|\,d\mu.
\]
Since $r = 1$ and $s = 1/(1-\eta)$ it follows that
\[
\sup_{t>0}t\|\chi_{\{M^{\D}_{\eta} f > t\}}\|_{s} \leq \|f\|_{r}.
\]
\end{remark}

\medskip

\subsection*{Variable Lebesgue spaces}
We now give the definition and some basic properties of variable
Lebesgue spaces.  For complete details see \cite{MR3026953,DHHR}.
Given a space of homogeneous type $(X,d,\mu)$, let
$\pp:X\rightarrow[1,\infty]$ be a measurable function and 
define the set $\Omega_{\infty}^{\pp}=\{x\in X:p(x)=\infty\}$. The variable 
Lebesgue space $\Lp(X)$ is the set of measurable functions such that for some $\lambda>0$,
\begin{equation}\label{prelim equation 7}
\rho_{\pp}(f/\lambda)=
\int_{X\setminus \Omega_{\infty,\pp}}
\left(\frac{|f(x)|}{\lambda}\right)^{p(x)}\,d\mu(x)+
\lambda^{-1}\|f\|_{L^{\infty}(\Omega_{\infty,\pp})}<\infty.
\end{equation}
$\Lp(X)$ is a Banach function space when equipped with the Luxemburg
norm
\begin{equation}\label{prelim equation 8}
\|f\|_{\pp}=\inf\{\lambda>0:\rho_{\pp}(f/\lambda)\leq1\}.
\end{equation}

For the fractional maximal operator to be bounded, we need to impose
some restrictions on the exponent function $\pp$.  To state them, we
will need a simple measure of the oscillation of $\pp$.  Given a set
$E\subset X$, we define
\[ p_+(E) = \esssup_{x\in E} p(x), \qquad p_-(E) = \essinf_{x\in E}
p(x).  \]
For brevity we write $p_+=p_+(X)$ and $p_-=p_-(X)$.  We will also need
to control the continuity of $\pp$ locally and at infinity.   Our
hypothesis is the same log-H\"older continuity condition that has played an
important role in the Euclidean case:  see~\cite{MR3026953} for
details and further references.  

\begin{definition}\label{prelim definition 5}
Given a function $r(\cdot):X\rightarrow[0,\infty)$, we say that $r(\cdot)$ is 
locally log-H\"{o}lder continuous and write $r(\cdot)\in LH_0$ if there exists a constant 
$C_{0}$ such that
\[
|r(x)-r(y)|\leq
\frac{-C_{0}}{\log d(x,y)},
\]
where $x,\,y\in X$ and $d(x,y)<1/2$. The constant $C_0$ is called the
$LH_0$ constant of $r(\cdot)$.
\end{definition}

\begin{definition}\label{prelim definition 6}
Given a function $r(\cdot):X\rightarrow[0,\infty)$, we say that $r(\cdot)$ is 
log-H\"{o}lder continuous at infinity with respect to a base point $x_0\in X,$ and write $r(\cdot)\in LH_\infty$, if there exist constants 
$C_\infty$, $r_\infty$ such that
\[
|r(x)-r_\infty|\leq
\frac{C_\infty}{\log(e+d(x,x_0))},
\]
for every $x\in X$.  The constant $C_\infty$ is called the $LH_\infty$
constant of $r(\cdot)$.
\end{definition}

When  $r(\cdot)\in LH_0\bigcap LH_\infty$ we say it is globally log-H\"{o}lder 
continuous and we define $LH= LH_0\bigcap LH_\infty$.

\begin{remark}
Since we wish to allow for the possibility of unbounded exponents $\pp$, we will apply 
the $LH_0$, $LH_\infty$ and $LH$ conditions to the function $1/\pp$ instead of applying them
to $\pp$.
\end{remark}

The definition of the $LH_\infty$ condition assumes the existence of a base point 
$x_0$ which is taken to be the origin in the Euclidean case. On a
general space of homogeneous type, there may not be such a
distinguished point; however, the choice of the base point is
immaterial as the next lemma shows.  We refer the reader
to~\cite{MR3322604} for a proof.  

\begin{lemma}\label{prelim lemma 4}
  Let $r(\cdot)\in LH_\infty$ with respect to the base point
  $x_0\in X$.   Given any $y_0\in X$, we have that $r(\cdot)\in LH_\infty$
 with respect to $y_0$ with a possibly different $LH_\infty$ constant.  
\end{lemma}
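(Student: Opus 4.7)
The plan is to keep the same constant $r_\infty$ and produce a new $LH_\infty$ constant $C_\infty'$ depending on $C_\infty$, $A_0$, and $D:=d(x_0,y_0)$. Concretely, I will show there is a constant $K=K(A_0,D)\geq 1$ such that
\[
\log(e+d(x,y_0)) \leq (1+\log K)\,\log(e+d(x,x_0))
\]
for every $x\in X$, and then the desired inequality for $y_0$ follows by multiplying the $LH_\infty$ condition at $x_0$ by $1+\log K$.

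To obtain the comparison of the logarithms, I would start from the quasi-triangle inequality
\[
d(x,y_0) \leq A_0\bigl(d(x,x_0)+d(x_0,y_0)\bigr) = A_0\,d(x,x_0) + A_0 D,
\]
so that
\[
e+d(x,y_0) \leq A_0\,(e+d(x,x_0)) + A_0 D \leq K\,(e+d(x,x_0)),
\]
with, e.g., $K=A_0(1+D)$ (using $e+d(x,x_0)\geq e\geq 1$). Taking logarithms yields $\log(e+d(x,y_0)) \leq \log K + \log(e+d(x,x_0))$, and since $\log(e+d(x,x_0))\geq 1$ and $\log K \geq 0$, the factor $\log K$ can be absorbed to give the displayed comparison.

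Combining this with the hypothesis $|r(x)-r_\infty|\leq C_\infty/\log(e+d(x,x_0))$ immediately gives
\[
|r(x)-r_\infty| \leq \frac{C_\infty(1+\log K)}{\log(e+d(x,y_0))},
\]
so $r(\cdot)\in LH_\infty$ with respect to $y_0$ with constant $C_\infty' = C_\infty(1+\log K)$. There is no real obstacle here; the only subtlety is ensuring $K\geq 1$ so that $\log K\geq 0$ (which holds since $A_0\geq 1$ and $D\geq 0$), and noting that the argument is independent of whether or not $\mu(X)$ is finite, since only the quasi-metric is used.
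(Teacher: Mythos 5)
Your proof is correct, and it is the standard argument: use the quasi-triangle inequality to bound $e+d(x,y_0)$ by a constant multiple of $e+d(x,x_0)$, take logarithms, and absorb the additive term $\log K$ using $\log(e+d(x,x_0))\geq 1$. The paper itself does not include a proof of this lemma but cites \cite{MR3322604}; your argument matches the natural proof one would give, with all the details (in particular $K\geq 1$ and the absorption step) handled correctly.
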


In the calculations to follow, we will need to estimate certain integrals that appear as error
terms. The following result was proved in 
\cite{MR3322604};  we include the short proof for completeness.

\begin{lemma}\label{prelim lemma 5}
Let $(X,d,\mu)$ be a space of homogeneous type.
If $N>\log_{2} C_{\mu}$, then for any $x_0\in X$,
\[
\underset{X}{\int}\frac{1}{(e+d(x,x_0))^{N}}\,d\mu<+\infty.
\]
\end{lemma}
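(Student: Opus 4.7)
The plan is to exploit the doubling condition by slicing $X$ into dyadic annuli around the base point $x_0$. Set $B_k = B(x_0, 2^k)$ for $k \in \mathbb{Z}$ and write
\[
X = B_0 \cup \bigcup_{k=0}^{\infty} (B_{k+1} \setminus B_k),
\]
where we allow the possibility that $X$ equals $B_0$ or some finite union, in which case only finitely many terms appear and there is nothing more to do.

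On $B_0$ the integrand is bounded by $e^{-N}$, so that piece contributes at most $e^{-N}\mu(B_0) < \infty$ since balls have finite measure by definition. On the annulus $B_{k+1}\setminus B_k$ we have $d(x,x_0) \geq 2^k$, so $(e+d(x,x_0))^{-N} \leq 2^{-kN}$. Iterating the doubling condition gives $\mu(B_{k+1}) \leq C_\mu^{k+1}\mu(B_0)$, and therefore
\[
\int_{B_{k+1}\setminus B_k}\frac{d\mu}{(e+d(x,x_0))^N} \leq 2^{-kN}\,C_\mu^{k+1}\mu(B_0) = C_\mu \mu(B_0)\left(\frac{C_\mu}{2^N}\right)^k.
\]

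Summing in $k$ yields a geometric series with ratio $C_\mu/2^N$. The hypothesis $N > \log_2 C_\mu$ is precisely the statement that $2^N > C_\mu$, so this ratio is strictly less than one and the series converges. Adding the $B_0$ contribution completes the estimate. There is no real obstacle here; the only point worth noting is that one must handle the possibility that $X$ is bounded (so that some of the annuli are empty), but this only shortens the sum and does not affect the bound.
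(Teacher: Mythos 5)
Your argument is correct and follows essentially the same route as the paper: decompose $X$ into dyadic annuli around $x_0$, bound the integrand by the reciprocal of the inner radius raised to the power $N$, use the doubling condition to bound the measure of the $k$-th annulus by $C_\mu^{k+1}\mu(B_0)$, and sum the resulting geometric series, which converges precisely because $N > \log_2 C_\mu$. The only cosmetic difference is the indexing of the annuli and that the paper keeps the slightly sharper factor $(e+2^{n-1})^{-N}$, but this does not change the argument.
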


\begin{proof}
Fix $x_0 \in X$,  define $B_{n}=B(x_0,2^{n})$ for $n\geq 0$
and let $B_{-1}=\emptyset$. 
Then $X=\underset{n\geq 0}{\bigcup}(B_{n}\setminus B_{n-1})$ and so we
have that 
\begin{align*}
\int_X\frac{1}{(e+d(x,x_0))^{N}}\,d\mu
&= \underset{n\geq 0}{\sum}
\int_{B_{n}\setminus B_{n-1}}
\frac{1}{(e+d(x,x_0))^{N}}\,d\mu\\
&\leq
\frac{1}{e^{N}}\mu(B_0)+
\underset{n\geq 1}{\sum}
\frac{1}{(e+2^{n-1})^{N}}\mu(B_{n}\setminus B_{n-1})\\
&\leq
\frac{1}{e^{N}}\mu(B_0)+
\underset{n\geq 1}{\sum}
C_{\mu}^{n}\frac{\mu(B_0)}{2^{(n-1)N}}\\
&\leq
\mu(B_0)\bigg(1+C_{\mu}\underset{n\geq 0}{\sum}
\left(\frac{C_{\mu}}{2^{N}}\right)^{n}\bigg).
\end{align*}
Since $\log_{2} C_{\mu} < N$ the final summation converges.
\end{proof}

\begin{remark}\label{prelim remark 1}
Below we will use Lemma \ref{prelim lemma 5} as follows:
fix $0<\Gamma<1$.  Then we may write
\[
\int_{X}\Gamma^{C_{\infty}^{-1}\log(e+d(x,x_0))}\,d\mu(x)=
\int_{X}
\frac{1}{(e+d(x,x_0))^{C_{\infty}^{-1}\log(1/\Gamma)}}\,d\mu(x).
\]
If $C_{\infty}^{-1}\log(1/\Gamma)>\log_{2} C_\mu$, then by
Lemma~\ref{prelim lemma 5} the integral on the right 
converges. Therefore, by the dominated convergence theorem,
\[
\underset{\Gamma\rightarrow 0}{\lim}
\int_{X}\Gamma^{C_{\infty}^{-1}\log(e+d(x,x_0))}\,d\mu(x)=0.
\]
In particular, we can always fix a constant $0<\Gamma<1$ so that the
integral of $\Gamma^{C_{\infty}^{-1}\log(e+d(\cdot,x_0))}$ over $X$ is
smaller than any given positive number.
\end{remark}

\medskip

We will use the $LH_\infty$ condition to replace variable exponents
with constant ones.  The following lemma was proved in~\cite{MR2493649}
in the Euclidean case. We include the short proof for completeness. 

\begin{lemma}\label{prelim lemma 6}
Let $(X,d,\mu)$ be a space of homogeneous type.
Given $r(\cdot)\in LH_\infty$, suppose $r_{\infty}>0$. Fix $x_0\in X$
and define $R(x)=(e+d(x,x_0))^{-N}$, where $N r_\infty>\log_{2} C_{\mu}$. 
Then given any measurable set $E\subset X$ and any measurable function
$F$ such that $0\leq F(y)\leq 1$ for a.e. $y\in E$, 
\begin{gather}
\int_EF(y)^{r(y)}\,d\mu(y)\leq
e^{N C_\infty}\int_{E}F(y)^{r_\infty}\,d\mu(y)+
e^{N C_\infty}\int_ER(y)^{r_\infty}\,d\mu(y), \label{eqn:LHinfty1}\\
\int_EF(y)^{r_\infty}\,d\mu(y)\leq
e^{N C_\infty}\int_{E}F(y)^{r(y)}\,d\mu(y)+
\int_ER(y)^{r_\infty}\,d\mu(y). \label{eqn:LHinfty2}
\end{gather}
\end{lemma}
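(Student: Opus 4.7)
The plan is to establish both inequalities pointwise on $E$ and then integrate. The key device is to split $E$ according to whether $F(y)\geq R(y)$ or $F(y)<R(y)$, exploiting the identity $-\log R(y)=N\log(e+d(y,x_0))$, which is designed to cancel exactly the weight appearing in the $LH_\infty$ estimate $|r(y)-r_\infty|\leq C_\infty/\log(e+d(y,x_0))$.

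On the set $E_1=\{y\in E:F(y)\geq R(y)\}$, the hypothesis $F\leq 1$ ensures $0\leq -\log F(y)\leq -\log R(y)=N\log(e+d(y,x_0))$, so
\[
|r(y)-r_\infty|\cdot|\log F(y)|\leq\frac{C_\infty}{\log(e+d(y,x_0))}\cdot N\log(e+d(y,x_0))=NC_\infty.
\]
Exponentiating, both $F(y)^{r(y)-r_\infty}$ and $F(y)^{r_\infty-r(y)}$ lie in $[e^{-NC_\infty},e^{NC_\infty}]$, which yields the two pointwise bounds $F(y)^{r(y)}\leq e^{NC_\infty}F(y)^{r_\infty}$ and $F(y)^{r_\infty}\leq e^{NC_\infty}F(y)^{r(y)}$ on $E_1$.

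On the complement $E_2=\{y\in E:F(y)<R(y)\}$, monotonicity of $t\mapsto t^\alpha$ for $\alpha\geq 0$ on $[0,1]$ (noting $R(y)\leq 1$) gives $F(y)^{r(y)}\leq R(y)^{r(y)}$ and $F(y)^{r_\infty}\leq R(y)^{r_\infty}$. The same computation as above, applied with $R$ in place of $F$ and using the exact equality $|\log R(y)|=N\log(e+d(y,x_0))$, yields $R(y)^{r(y)}\leq e^{NC_\infty}R(y)^{r_\infty}$. Combining, on $E_2$ we have $F(y)^{r(y)}\leq e^{NC_\infty}R(y)^{r_\infty}$ and $F(y)^{r_\infty}\leq R(y)^{r_\infty}$.

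Summing the contributions from $E_1$ and $E_2$ produces the pointwise inequalities
\[
F(y)^{r(y)}\leq e^{NC_\infty}F(y)^{r_\infty}+e^{NC_\infty}R(y)^{r_\infty}\quad\text{and}\quad F(y)^{r_\infty}\leq e^{NC_\infty}F(y)^{r(y)}+R(y)^{r_\infty}
\]
for a.e.\ $y\in E$, after which integration over $E$ immediately yields \eqref{eqn:LHinfty1} and \eqref{eqn:LHinfty2}. The main subtlety to monitor is the sign bookkeeping, since $r(y)-r_\infty$ may have either sign and $\log F(y)\leq 0$; the case split is engineered precisely so that the product $|r(y)-r_\infty|\cdot|\log F(y)|$ (or its analogue with $R$) admits a uniform bound by $NC_\infty$ regardless of signs. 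Degenerate loci where $F(y)=0$ require no separate treatment since the corresponding left-hand sides vanish.
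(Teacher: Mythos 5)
Your proof is correct and takes essentially the same approach as the paper: the paper splits $E$ into $\{F\leq R\}$ and $\{F>R\}$ exactly as you do, and on the latter set it uses the estimate $F(y)^{r_\infty-r(y)}\leq R(y)^{-|r(y)-r_\infty|}\leq e^{NC_\infty}$, which is the exponentiated form of your logarithmic bound $|r(y)-r_\infty|\,|\log F(y)|\leq NC_\infty$. The only difference is cosmetic: you derive both \eqref{eqn:LHinfty1} and \eqref{eqn:LHinfty2} simultaneously from one pointwise estimate, whereas the paper writes out \eqref{eqn:LHinfty2} and states that \eqref{eqn:LHinfty1} is similar (and your parenthetical remark that the left sides vanish when $F(y)=0$ is slightly off if $r(y)=0$, but the monotonicity chain you actually use covers that case, so nothing is lost).
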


\begin{proof}
We shall prove inequality \eqref{eqn:LHinfty2}; the proof of \eqref{eqn:LHinfty1}
is similar.  Write
\[
\int_EF(y)^{r_\infty}\,d\mu(y)=
\int_{E_{1}}F(y)^{r_\infty}\,d\mu(y)+
\int_{E_{2}}F(y)^{r_\infty}\,d\mu(y),
\]
where 
\[
E_1=\{y\in E:F(y)\leq R(y)\},\qquad 
E_2=\{y\in E:F(y)>R(y)\}.
\]
On the set $E_1$, $F(y)^{r_\infty}\leq R(y)^{r_\infty}$;  hence,
\[
\int_{E_{1}}F(y)^{r_\infty}\,d\mu(y)\leq
\int_{E_{1}}R(y)^{r_\infty}\,d\mu(y).
\]

To estimate the integral over the set $E_2$, note that 
since $0\leq F(y)\leq 1$ for a.e. $y\in E_2$, 
$F(y)^{r_{\infty}-r(y)}\leq R(y)^{-|r(y)-r_{\infty}|}$.   Thus
\[
\int_{E_{2}}F(y)^{r_\infty}\,d\mu(y)=
\int_{E_{2}}F(y)^{r_\infty} F(y)^{r_{\infty}-r(y)}\,d\mu(y)
\leq
\int_{E_{2}}F(y)^{r(y)} R(y)^{-|r(y)-r_{\infty}|}\,d\mu(y).
\]
Since $r(\cdot)\in LH_\infty$, 
\[
R(y)^{-|r(y)-r_{\infty}|}=e^{N |r(y)-r_{\infty}|\log(e+d(y,x_0))}\leq
e^{N C_{\infty}}.
\]
If we combine all these estimates, we get \eqref{eqn:LHinfty2}.
\end{proof}

\begin{remark}
  If $\mu(E)<\infty$, then the integral of $R(y)$ is always finite.
  If $\mu(E)=\infty$, then by Lemma \ref{prelim lemma 5} the
  assumption that $N r_\infty>\log_2C_\mu$ ensures
\[
\int_{E}R(y)^{r_\infty}\,d\mu(y)\leq
\int_{X}\frac{1}{(e+d(y,x_0))^{N r_\infty}}\,d\mu(y)<+\infty.
\]
\end{remark}

\medskip

We will use the $LH$ condition to get estimates on the measure of
cubes.  The following result is referred to as a Diening type
estimate: see \cite{MR3322604} for a proof and the history of this
important condition. In the Euclidean case this estimate is equivalent
to the $LH_0$ condition; in the more general setting of spaces of
homogeneous type we seem to require a stronger hypothesis.

\begin{lemma}\label{prelim lemma 7}
  Given a space of homogeneous type $(X,d,\mu)$, let
  $\pp:X\rightarrow\left[1,\infty\right]$ be such
  that $1/\pp\in LH$. Then there is a positive constant $C$ such that
  for any ball $B$
\begin{enumerate}
\item  $\mu(B)^{\frac{1}{p_{+}(B)}-\frac{1}{p_{-}(B)}}\leq C$;\\

\item for all $x\in B$,  $\mu(B)^{\frac{1}{p(x)}-\frac{1}{p_{-}(B)}}\leq C$ and 
                       $\mu(B)^{\frac{1}{p_{+}(B)}-\frac{1}{p(x)}}\leq C$.
\end{enumerate}
\end{lemma}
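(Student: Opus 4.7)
The plan is to reduce (2) to (1) by an elementary monotonicity argument, and then to prove (1) by splitting into cases based on whether $\mu(B) \geq 1$ or $\mu(B) < 1$, and, in the latter case, on the radius of $B$.

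For the reduction, fix $x \in B$. Since $p_-(B) \leq p(x) \leq p_+(B)$, the two exponents appearing in (2) --- namely $1/p(x) - 1/p_-(B)$ and $1/p_+(B) - 1/p(x)$ --- both lie in the interval $[1/p_+(B) - 1/p_-(B),\, 0]$. If $\mu(B) \geq 1$, then raising $\mu(B)$ to any non-positive exponent gives a number bounded by $1$; if $\mu(B) < 1$, the map $t \mapsto \mu(B)^t$ is decreasing, so both quantities in (2) are bounded by $\mu(B)^{1/p_+(B) - 1/p_-(B)}$, which is exactly the quantity controlled by (1). Hence (2) follows from (1).

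For (1), the case $\mu(B) \geq 1$ is trivial because the exponent $1/p_+(B) - 1/p_-(B)$ is non-positive. So assume $\mu(B) < 1$ and write $B = B(x_0, r)$; after taking logarithms, it suffices to bound $(1/p_-(B) - 1/p_+(B))\cdot |\log \mu(B)|$ by a constant. If $r \leq 1/(4A_0)$, then any $x, y \in B$ satisfy $d(x,y) < 2A_0 r \leq 1/2$, so $LH_0$ applies and gives $|1/p(x) - 1/p(y)| \leq C_0/|\log d(x,y)| \leq C_0/|\log(2A_0 r)|$ (the last inequality since $d(x,y) < 2A_0 r < 1$). Taking essential supremum and infimum over $x, y \in B$ yields $1/p_-(B) - 1/p_+(B) \leq C_0/|\log(2A_0 r)|$; to finish one needs a matching bound $|\log \mu(B)| \leq C|\log r|$, which will come from the lower mass bound (Lemma \ref{prelim lemma 1}) together with doubling. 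If instead $r > 1/(4A_0)$, one invokes $LH_\infty$ with base point $x_0$ (permissible by Lemma \ref{prelim lemma 4}), so that the estimate $|1/p(x) - 1/p_\infty| \leq C_\infty/\log(e + d(x,x_0))$ bounds $1/p_-(B) - 1/p_+(B)$, which is then matched against $|\log \mu(B)|$ via doubling and the assumption $\mu(B) < 1$.

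The principal obstacle lies in the small-radius regime: to pair the $LH_0$-derived bound $1/p_-(B) - 1/p_+(B) \leq C_0/|\log r|$ with a uniform bound $|\log \mu(B)| \leq C|\log r|$. The lower mass bound provides $\mu(B) \geq c(r/r_0)^{\log_2 C_\mu} \mu(B(x_0, r_0))$ for any fixed reference radius $r_0 > r$, but $\mu(B(x_0, r_0))$ is not \emph{a priori} bounded below uniformly in $x_0$. Handling this subtlety --- perhaps by invoking $LH_\infty$ to compare unit-scale measures at different base points, or by exploiting $\mu(B) < 1$ more delicately in the doubling argument --- is the technical heart of the proof, and is precisely where the spaces-of-homogeneous-type setting diverges from the Euclidean case, in which $\mu(B(x_0, r)) \sim r^n$ trivially supplies the needed estimate.
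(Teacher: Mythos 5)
The paper does not prove this lemma; it simply cites \cite{MR3322604}, so there is no in-paper argument to compare against. Your reduction of (2) to (1) and the disposal of the case $\mu(B)\geq 1$ are both correct. The gap is the one you yourself flag, and you do not close it; moreover, the two patches you propose both fail. In the small-radius regime the target inequality $|\log\mu(B)|\leq C|\log r|$ is simply false on a general space of homogeneous type: fixing a base point $z_0$ and writing $D=d(x_0,z_0)$, the lower mass bound applied inside the large ball $B(z_0,2A_0D)$ only yields $\mu(B(x_0,r))\gtrsim r^{\log_2 C_\mu}D^{-\log_2 C_\mu}\mu(B(z_0,1))$, so $|\log\mu(B)|\lesssim |\log r|+\log(e+D)+1$, with an unavoidable extra term $\log(e+D)$. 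In the large-radius regime, invoking $LH_\infty$ with the \emph{moving} center $x_0$ as base point is doubly broken: Lemma~\ref{prelim lemma 4} lets the $LH_\infty$ constant change with the base point, destroying uniformity, and worse, for $x\in B(x_0,r)$ one has $d(x,x_0)<r$, so the bound $C_\infty/\log(e+d(x,x_0))$ is at best $C_\infty/\log e=C_\infty$ — a trivial constant bound on the oscillation that cannot be paired against the unbounded quantity $|\log\mu(B)|$.

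The missing idea is to keep the base point $z_0$ \emph{fixed} and to combine the two log-H\"older conditions multiplicatively rather than by a case split on $r$. One bounds the oscillation by the minimum of the two available estimates,
\[
\frac{1}{p_-(B)}-\frac{1}{p_+(B)}\;\leq\;\min\Big\{\tfrac{C_0}{-\log(2A_0r)},\;\tfrac{2C_\infty}{\log(e+D/(2A_0))}\Big\},
\]
(the first valid when $2A_0r<1/2$, the second when $D\geq 2A_0 r$; the complementary regimes are the easy ones where $\mu(B)$ is bounded below directly), while the lower mass bound gives $|\log\mu(B)|\lesssim |\log r|+\log(e+D)+1$. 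The product is then controlled by the elementary inequality $\min\{a^{-1},b^{-1}\}(a+b)\leq 2$. This is precisely the quantitative sense in which, as the authors remark, $LH_0$ alone is \emph{not} enough in the non-Euclidean setting: you must play the $LH_\infty$ decay in $D$ against the $\log D$ contribution to $|\log\mu(B)|$, not merely invoke it in a separate large-radius case.
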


We will actually need the analog of the Diening estimate for dyadic cubes.  

\begin{cor}\label{prelim corollary 1}
  Given a space of homogeneous type $(X,d,\mu)$, let
  $\pp:X\rightarrow\left[1,\infty\right]$ be such that $1/\pp\in LH$
  and let $\D$ be a dyadic grid on $X$.  Then inequalities $(1)$ and
  $(2)$ of Lemma~\textup{\ref{prelim lemma 7}} hold with $B$ replaced by any dyadic cube
  $Q\in\D$.
If $\mu(X)<+\infty$, then the same conclusion holds if $1/\pp\in LH_0$.  
\end{cor}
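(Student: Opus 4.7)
The plan is to deduce the corollary from Lemma~\ref{prelim lemma 7} by passing from dyadic cubes to enclosing quasi-metric balls. The key tool is property (5) of Theorem~\ref{prelim theorem 1}: every $Q\in\D_k$ satisfies $B(x_c(Q),\delta^k)\subseteq Q\subseteq B(x_c(Q),C\delta^k)=:B_Q$. First, iterating the doubling condition $C/\delta$-many times, I would conclude $\mu(B_Q)\leq C_1\mu(B(x_c(Q),\delta^k))\leq C_1\mu(Q)$ with $C_1=C_1(C_\mu,C,\delta)$, so $\mu(Q)\approx\mu(B_Q)$. Second, since $Q\subseteq B_Q$, I get $p_+(Q)\leq p_+(B_Q)$ and $p_-(Q)\geq p_-(B_Q)$; in particular, both $1/p_+(Q)-1/p_-(Q)$ and $1/p_+(B_Q)-1/p_-(B_Q)$ lie in $[-1,0]$, and the first is no smaller than the second.

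To prove part (1), I split on the size of $\mu(Q)$. If $\mu(Q)\geq 1$, the estimate is trivial because the exponent is non-positive. If $\mu(Q)<1$, the map $a\mapsto\mu(Q)^a$ is decreasing, so the exponent comparison above yields
\[
\mu(Q)^{1/p_+(Q)-1/p_-(Q)}\leq\mu(Q)^{1/p_+(B_Q)-1/p_-(B_Q)}.
\]
I then write $\mu(Q)^{a}=\mu(B_Q)^{a}\bigl(\mu(Q)/\mu(B_Q)\bigr)^{a}$ with $a=1/p_+(B_Q)-1/p_-(B_Q)\in[-1,0]$; since $\mu(Q)/\mu(B_Q)\in[C_1^{-1},1]$, the ratio factor contributes at most $C_1^{|a|}\leq C_1$, and Lemma~\ref{prelim lemma 7}(1) applied to $B_Q$ closes the estimate. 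Part (2) is handled the same way, replacing the oscillation bound with the one-sided bounds $p_-(B_Q)\leq p(x)\leq p_+(B_Q)$ for $x\in Q\subseteq B_Q$.

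For the finite-measure statement, Lemma~\ref{prelim finite measure} gives $\diam X<+\infty$, so the radii $C\delta^k$ of the $B_Q$ are uniformly bounded, and $\mu(Q)\leq\mu(X)<+\infty$. The only cubes for which the claim is not immediate are those with small measure, which by the lower mass bound have small radius. At such scales only the local behavior of $\pp$ matters, and Lemma~\ref{prelim lemma 7} is known to hold under the single assumption $1/\pp\in LH_0$ (the proof of Lemma~\ref{prelim lemma 7} only invokes $LH_\infty$ to handle balls of large radius, which cannot occur when $\diam X<+\infty$). Substituting that version into the argument above yields the claim.

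The main technical obstacle is the bookkeeping in the second paragraph: the swap from $\mu(Q)$ to $\mu(B_Q)$ has to be carried out with an exponent that one does not control in sign and whose absolute value is bounded only in $[0,1]$. The reason this works is precisely that the range of $1/\pp$ is bounded, so the ratio $\mu(Q)/\mu(B_Q)$, which is itself pinned between two universal constants, contributes a multiplicative factor independent of $Q$.
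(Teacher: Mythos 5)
Your argument for the main part of the corollary is correct and essentially the same as the paper's: sandwich $Q$ between the balls $B_1 = B(x_c(Q),\delta^k) \subseteq Q \subseteq B_Q = B(x_c(Q),C\delta^k)$ from Theorem~\ref{prelim theorem 1}(5), control $\mu(Q)/\mu(B_Q)$ from below by a universal constant (the paper invokes the lower mass bound, Lemma~\ref{prelim lemma 1}, rather than iterating doubling directly, but these are interchangeable), observe the exponent monotonicity coming from $Q\subseteq B_Q$, split on $\mu(Q)\geq 1$ versus $\mu(Q)<1$, and apply Lemma~\ref{prelim lemma 7} to $B_Q$. Your careful tracking of the exponent range $[-1,0]$ is a slightly more detailed version of the same computation.

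Where you diverge is the finite-measure case, and there your justification has a gap. You assert that Lemma~\ref{prelim lemma 7} ``is known to hold under the single assumption $1/\pp\in LH_0$'' when $\diam X<\infty$, and you justify this by appealing to the internal structure of its proof (``only invokes $LH_\infty$ to handle balls of large radius''). But Lemma~\ref{prelim lemma 7} is cited from an external reference and its proof is not reproduced here, so you are taking on faith a property of a proof you are not exhibiting. The paper sidesteps this entirely with a short self-contained computation: when $\diam X<\infty$ one has $X\subset B(x_0,R)$, and setting $p_\infty = p_-$ gives
\[
\left|\frac{1}{p(x)}-\frac{1}{p_\infty}\right|\leq \frac{1}{p_-}\leq\frac{2\log(e+R)}{\log(e+d(x,x_0))},
\]
so that $1/\pp\in LH_\infty$ automatically, hence $1/\pp\in LH$, and Lemma~\ref{prelim lemma 7} applies as stated. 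This keeps Lemma~\ref{prelim lemma 7} as a black box. You should replace your finite-measure paragraph with an argument of this type, or else actually verify the assertion about the proof of Lemma~\ref{prelim lemma 7}.
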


\begin{proof}
  Let $Q$ be a dyadic cube. If $\mu(Q)\geq1$, then the result follows
  trivially. Assume $\mu(Q)<1$. From Theorem \ref{prelim theorem 1} it
  follows that there are balls $B_1$ and $B_2$ with the same centers
  and comparable radii such that $B_1\subseteq Q\subseteq B_2$. By
  the lower mass bound \eqref{prelim equation 1}, there exists
 a constant $K\geq1$ such that
  $1/K\leq\mu(B_1)/\mu(B_2)\leq\mu(Q)/\mu(B_2)$.  Since
  $Q\subseteq B_2$ it follows that
  $1/p_{-}(Q)-1/p_{+}(Q)\leq1/p_{-}(B_2)-1/p_{+}(B_2)$. Since $K\geq1$
  and $\mu(Q)<1$, by Lemma \ref{prelim lemma 7} we have that
\[
\left(\frac{1}{\mu(Q)}\right)^{\frac{1}{p_{-}(Q)}-\frac{1}{p_{+}(Q)}}\leq
K^{\frac{1}{p_-}-\frac{1}{p_+}}
\left(\frac{1}{\mu(B_2)}\right)^{\frac{1}{p_{-}(B_2)}-\frac{1}{p_{+}(B_2)}}\leq
KC.
\]
This proves $(1)$.  The proof of $(2)$ is essentially the same.

\medskip

If $\mu(X)<+\infty$, then by Lemma~\ref{prelim finite measure},
$\diam(X) < +\infty$, Therefore, there exists $x_0\in X$ and $R>0$ such that $X\subset
B(x_0,R)$.  Let $p_\infty=p_-$.  Then for any $x\in X$,
\[ \left|\frac{1}{p(x)}-\frac{1}{p_\infty}\right|
\leq \frac{2}{p_-} \frac{\log(e+d(x,x_0))}{\log(e+d(x,x_0))}
\leq \frac{2\log(e+R)}{\log(e+d(x,x_0))}. \]
Hence $1/\pp$ satisfies the $LH_\infty$ condition with $C_\infty
=2\log(e+R)$.
\end{proof}

Finally, we will need a version of the monotone convergence theorem for
variable Lebesgue spaces.  For a proof in the Euclidean case,
see~\cite{MR3026953}; the same proof holds without change on spaces of homogeneous type.

\begin{lemma}\label{prelim lemma 8}
  Given a space of homogeneous type $(X,d,\mu)$ and a non-negative
  function $f\in \Lp(X)$, suppose that the sequence $\{f_N\}$ of
  non-negative functions increases pointwise to $f$ almost
  everywhere. Then $\|f_N\|_{\pp}$ increases to $\|f\|_{\pp}$.
\end{lemma}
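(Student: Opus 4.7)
The plan is to mimic the classical monotone convergence argument but carried out at the level of the modular $\rho_{\pp}$ defined in \eqref{prelim equation 7}, and then to translate the modular statement back to the Luxemburg norm via its definition \eqref{prelim equation 8}. The main observation is that for fixed $\lambda>0$ the map $g\mapsto \rho_{\pp}(g/\lambda)$ is monotone in $|g|$, so the monotonicity half of the conclusion comes essentially for free: since $0\leq f_N\leq f_{N+1}\leq f$ almost everywhere, one gets $\rho_{\pp}(f_N/\lambda)\leq \rho_{\pp}(f_{N+1}/\lambda)\leq \rho_{\pp}(f/\lambda)$, and taking infima over those $\lambda$ for which the right-hand side is $\leq 1$ gives $\|f_N\|_{\pp}\leq \|f_{N+1}\|_{\pp}\leq \|f\|_{\pp}$. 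In particular $\lambda_\infty:=\lim_N \|f_N\|_{\pp}$ exists and is at most $\|f\|_{\pp}$.

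Next I would argue by contradiction that $\lambda_\infty=\|f\|_{\pp}$. Suppose not, and pick $\lambda'$ with $\lambda_\infty<\lambda'<\|f\|_{\pp}$. By the definition of the Luxemburg norm as an infimum, $\lambda'<\|f\|_{\pp}$ forces $\rho_{\pp}(f/\lambda')>1$. On the other hand, $\|f_N\|_{\pp}\leq\lambda_\infty<\lambda'$ for every $N$, so $\rho_{\pp}(f_N/\lambda')\leq 1$ for every $N$. The contradiction will come from showing $\rho_{\pp}(f_N/\lambda')\to \rho_{\pp}(f/\lambda')$.

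To establish that convergence, split the modular into its two pieces. On $X\setminus \Omega_{\infty}^{\pp}$, the integrands $(f_N(x)/\lambda')^{p(x)}$ increase pointwise a.e.\ to $(f(x)/\lambda')^{p(x)}$, so the classical monotone convergence theorem applies directly. On $\Omega_{\infty}^{\pp}$, I would verify separately that $\|f_N\|_{L^\infty(\Omega_{\infty}^{\pp})}\nearrow \|f\|_{L^\infty(\Omega_{\infty}^{\pp})}$: the inequality $\leq$ is monotonicity, while if $M<\|f\|_{L^\infty(\Omega_{\infty}^{\pp})}$ then $\{f>M\}\cap\Omega_{\infty}^{\pp}$ has positive measure, so by pointwise convergence $f_N>M$ eventually on a subset of positive measure, giving $\|f_N\|_{L^\infty(\Omega_{\infty}^{\pp})}>M$ for large $N$. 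Adding the two contributions yields $\rho_{\pp}(f_N/\lambda')\to \rho_{\pp}(f/\lambda')>1$, contradicting $\rho_{\pp}(f_N/\lambda')\leq 1$.

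The only real subtlety I expect is the careful use of the strict inequality $\rho_{\pp}(f/\lambda')>1$ when $\lambda'<\|f\|_{\pp}$; this is purely a property of the infimum definition of the Luxemburg norm and requires no continuity of $\rho_{\pp}$. Everything else reduces to standard monotone convergence, once the $L^\infty$-component over $\Omega_{\infty}^{\pp}$ is handled. Since the argument nowhere uses Euclidean structure—only measurability, non-negativity and the pointwise monotone limit—it transfers to a general space of homogeneous type without modification.
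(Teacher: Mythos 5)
Your argument is correct, and it is essentially the standard modular-plus-monotone-convergence proof that the paper points to (it cites the Euclidean proof in Cruz-Uribe and Fiorenza's book rather than reproducing it, noting that it transfers unchanged). The only cosmetic remark is that the contradiction can be bypassed: for any $\lambda>\lim_N\|f_N\|_\pp$, your MCT argument gives $\rho_\pp(f/\lambda)=\lim_N\rho_\pp(f_N/\lambda)\leq 1$, hence $\|f\|_\pp\leq\lambda$, and letting $\lambda$ decrease to the limit yields the reverse inequality directly — but this is the same computation, just packaged without a reductio.
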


\section{Proof of the boundedness of $M_\eta$}
\label{main}

In this section we prove Theorem~\ref{main theorem 1}.
We will first assume that $\mu(X)=\infty$.  The proof when
$\mu(X)<\infty$ is similar but shorter, and we will prove it at the
end of the section.

\medskip

 We begin the proof with some reductions.  First, we may assume that
 $p_-<\infty$.  If $p_-=\infty$, then $\pp =\infty$ a.e. and hence $\eta=0$.
In this case Theorem~\ref{main theorem 1} reduces to the elementary
fact that the maximal operator is bounded on $L^\infty(X)$.  

Second, by Proposition \ref{prelim proposition 1} it will suffice to
prove inequality \eqref{main equation 1} with $M_{\eta}$ replaced by
$M^{\D}_{\eta}$ for an arbitrary dyadic grid $\D$ on $X$.

Third, by the definitions of variable Lebesgue norms and dyadic
fractional maximal operators, we may assume without loss of generality
that $f$ is non-negative.  Moreover, we may assume that $f$ is a
bounded function with bounded support. If inequality \eqref{main equation 1}
holds for such functions, then given an arbitrary non-negative function 
$f\in \Lp(X)$ and a base point $x_0$, then it holds for the functions 
$f_n(x)=\min\{f(x),n\} \chi_{B(x_0,n)}(x)$ and by Lemmas~\ref{prelim lemma 9} 
and~\ref{prelim lemma 8} we have 
\[ \|M^\D_\eta f\|_\qq = \lim_{n\rightarrow \infty}
\|M^\D_\eta f_n\|_\qq
\leq C\lim_{n\rightarrow \infty}
\| f_n\|_\pp = \|f\|_\pp. \]

Finally, by the homogeneity of the norm we may assume that
$\|f\|_\pp=1$. 

\medskip

Fix such a function $f$ and write $f=f_1+f_2$, where $f_1=f \chi_{\{f>1\}}$
and $f_2=f \chi_{\{f\leq 1\}}$.  Then 
\[ \|M_\eta^\D f \|_\qq \leq \|M_\eta^\D f_1\|_\qq
+ \|M_\eta^\D f_2\|_\qq, \]
and for $i=1,\,2$, $\|f_i\|_\pp \leq \|f\|_\pp=1$.  Therefore, it will
suffice 
to find positive constants $\lambda_i$ independent of $\D$ such that
$\|M^{\D}_{\eta} f_i\|_{\qq}\leq\lambda_i$ for $i=1,2$.  By the
definition of variable Lebesgue space norms, this is equivalent to
finding $\lambda_i$ such that 
$\rho_{\qq}(\lambda_{i}^{-1} M^{\D}_{\eta} f_i)\leq1$ for $i=1,2$.

We will also use the fact that since 
$\|f\|_{\pp}=1$  for $i=1,\,2$, we have that $\rho_{\pp}(f_i) \leq
\rho_{\pp}(f)\leq\|f\|_{\pp}=1$.   This follows from the definition of
the norm:  see \cite{MR3026953,DHHR} for details.

\subsection*{The estimate for $f_1$}\label{main subsection 1}

This part of the argument closely follows the proof given
in~\cite{MR2493649}.  Since $\rho_{\pp}(f_1)\leq1$, by the definition
of the modular,
$\|f_1\|_{L^{\infty}(\Omega_{\infty}^{\pp})}\leq1$. Hence, $f_1\leq1$
almost everywhere on $\Omega_{\infty}^{\pp}$.  But by definition
$f_1(x)>1$ or $f_1(x)=0$, so $f_1=0$ a.e.~on
$\Omega_{\infty}^{\pp}$. In other words, up to a set of measure zero
$\supp(f_1)\subset X\setminus \Omega_{\infty}^{\pp}$.

We want to show that there exists a constant $\lambda_1>1$
such that $\rho_{\qq}(\lambda_{1}^{-1}M^{\D}_{\eta}
f_1)\leq1$.   For the argument below it is convenient
to write $\lambda_{1}^{-1}=\alpha_{1}
\beta_{1} \gamma_{1}$, where we will assume $0<\alpha_1,\,\beta_1,\,\gamma_1<1$.
In fact, we will show that these constants can be chosen so that
\begin{equation}\label{main subsection 1 equation 1}
\int_{X\setminus \Omega^{\qq}_{\infty}}(\alpha_1 \beta_1 \gamma_1 M^{\D}_{\eta} f_1(x))^{q(x)}\,d\mu(x)
\leq\frac{1}{2} 
\end{equation}
and
\begin{equation}\label{main subsection 1 equation 2}
\alpha_1 \beta_1 \gamma_1 \|M^{\D}_{\eta} f_1\|_{L^{\infty}(\Omega^{\qq}_{\infty})}\leq\frac{1}{2}.
\end{equation}

\medskip

We first prove inequality (\ref{main subsection 1 equation 1}).  If
$\eta=0$, then for every $x\in X$, 
$M^{\D} f_1(x)\leq\|f_1\|_{\infty}<+\infty$. If $0<\eta<1$ and $1<r<1/\eta$, then
$f_1\in L^{r}(X)\bigcap L^{\infty}(X)$ because $f_1$ is bounded and
has bounded support.  Hence, by Remark
\ref{prelim remark 2} and inequality (\ref{prelim equation 14}), 
$M^{\D}_{\eta} f_1(x)<+\infty$.  

Let $C$ be the constant in Lemma \ref{prelim lemma 3}. For each integer $k$ define the set 
\[ \Omega_k=
\{x\in X:M^{\D}_{\eta} f_1(x)>C^{k}\}. \]
Since $M^{\D}_{\eta} f_1$ is finite and positive everywhere, 
\[
X=\underset{k}{\bigcup}(\Omega_{k}\setminus \Omega_{k+1}).
\]
Since $f_1$ satisfies the hypotheses of Lemma \ref{prelim lemma 3}
we can write 
\[ 
\Omega_k=
\{x\in X:M^{\D}_{\eta} f_1(x)>C^{k}\}=
\underset{j}{\bigcup}Q^{k}_{j},
\]
where the dyadic cubes $Q^{k}_{j}$ satisfy
\[ 
\mu(Q^{k}_{j})^{\eta}\avgint_{Q^{k}_{j}}f_1\,d\mu>
C^{k}.
\]
It follows by the properties of dyadic cubes that if we
define $E_j^k=Q_j^k\setminus \Omega_{k+1}$, then the sets $E_j^k$ are
pairwise disjoint.   (See~\cite{MR3302574} where this fact is given using
somewhat different terminology.)

We can now estimate as follows:  

\begin{align}
& \notag\int_{X\setminus \Omega_{\infty}^\qq}
(\alpha_1 \beta_1 \gamma_1 M^{\D}_{\eta} f_1(x))^{q(x)}\,d\mu(x) \\
& \notag \qquad \qquad =
\underset{k}{\sum}\int_{(\Omega_{k}\setminus \Omega_{k+1})\setminus \Omega_{\infty}^{\qq}}
(\alpha_1 \beta_1 \gamma_1 M^{\D}_{\eta} f_1(x))^{q(x)}\,d\mu(x)\\
& \notag \qquad \qquad \leq
\underset{k}{\sum}\int_{(\Omega_{k}\setminus \Omega_{k+1})\setminus \Omega_{\infty}^{\qq}}(\alpha_1 \beta_1 \gamma_1 C^{k+1})^{q(x)}\,d\mu(x).
\intertext{If we choose $0<\alpha_1\leq 1/C$, then}
&  \label{eqn:first} \qquad \qquad \leq
\underset{k,j}{\sum}\int_{E^{k}_{j}\setminus \Omega_{\infty}^{\qq}}
\left(\beta_1 \gamma_1\mu(Q^{k}_{j})^{\eta}\avgint_{Q^{k}_{j}}f_1(y)\,d\mu(y)\right)^{q(x)}\,d\mu(x).
\end{align}

If $k$ and $j$ are such that
$\mu(E^{k}_{j}\setminus \Omega_{\infty}^{\qq})=0$, then this term in
the sum is zero.  Hence, we may disregard those terms and assume that
$\mu(E^{k}_{j}\setminus \Omega_{\infty}^{\qq})>0$.  
Since
$E^{k}_{j}\subseteq Q^{k}_{j}$,  we have that the exponents 
$p_{jk}=p_-(Q_j^k)$ and
$q_{jk}=q_-(Q_j^k)$ are both finite and satisfy 
$1<p_{jk}<(1/\eta)$ and $(1/p_{jk})-(1/q_{jk})=\eta$.

Therefore, by Lemma \ref{prelim lemma 10} we have 
\[
\mu(Q^{k}_{j})^{\eta}\avgint_{Q^{k}_{j}}f_1\,d\mu\leq
\left(\int_{Q^{k}_{j}}f_{1}^{p_{jk}}\,d\mu\right)^{\frac{1}{p_{jk}}-\frac{1}{q_{jk}}}
\left(\avgint_{Q^{k}_{j}}f_1\,d\mu\right)^{\frac{p_{jk}}{q_{jk}}}.
\]
Since $\supp(f_1)\subset X\setminus \Omega_{\infty}^{\pp}$ (up to a
set of measure zero), and since $f_1$ is either $0$ or greater than
$1$, it follows that the first integral on the righthand side is dominated by
$\rho_{\pp}(f_1)$
and so it is less than or equal to $1$.
Therefore, by H\"{o}lder's inequality with respect to the exponent $p_{jk}/p_-$
we get
\[ 
\mu(Q^{k}_{j})^{\eta}\avgint_{Q^{k}_{j}}f_1(y)\,d\mu(y)
\leq
\mu(Q^{k}_{j})^{-\frac{p_-}{q_{jk}}}
\left(\int_{Q^{k}_{j}}f_1(y)^{\frac{p_{jk}}{p_{-}}}\,d\mu(y)\right)^{\frac{p_-}{q_{jk}}}.
\]
If we  substitute this  inequality into 
inequality \eqref{eqn:first} and rearrange terms, we get
\begin{multline}\label{main subsection 1 equation 11}
\int_{X\setminus \Omega_{\infty}^\qq}
(\alpha_1 \beta_1 \gamma_1 M^{\D}_{\eta} f_1(x))^{q(x)}\,d\mu(x)\\
\leq
\underset{k,j}{\sum}\int_{E^{k}_{j}\setminus \Omega_{\infty}^{\qq}}
(\beta_1 \mu(Q^{k}_{j})^{-\frac{p_-}{q_{jk}}})^{q(x)}
\left(\gamma_1 \left(\int_{Q^{k}_{j}}f_1(y)^{\frac{p_{jk}}{p_{-}}}\,d\mu(y)\right)^{\frac{p_-}{q_{jk}}}\right)^{q(x)}\,d\mu(x).
\end{multline}

Since $1/\qq\in LH$, by Corollary~\ref{prelim corollary 1}
there exists a constant $C$ such that for any $x\in Q^{k}_{j}$, 
\[
\beta_1 \mu(Q^{k}_{j})^{-\frac{p_-}{q_{jk}}}\leq
\beta_1 C^{p_{-}} \mu(Q^{k}_{j})^{-\frac{p_-}{q(x)}}.
\]
If we choose $0<\beta_1\leq 1/C^{p_{-}}$, then
\[ 
\beta_1 \mu(Q^{k}_{j})^{-\frac{p_-}{q_{jk}}}\leq
\mu(Q^{k}_{j})^{-\frac{p_-}{q(x)}}
\]
for any $x\in E_j^k \subset Q^{k}_{j}$.  If we substitute this estimate
into \eqref{main subsection 1 equation 11}, we get
\begin{align*}
&\int_{X\setminus \Omega_{\infty}^\qq}
(\alpha_1 \beta_1 \gamma_1 M^{\D}_{\eta} f_1(x))^{q(x)}\,d\mu(x)\\
& \qquad \qquad \leq
\underset{k,j}{\sum}\int_{E^{k}_{j}\setminus \Omega_{\infty}^{\qq}}
\mu(Q^{k}_{j})^{-p_{-}}
\left(\gamma_1 \left(\int_{Q^{k}_{j}}f_1(y)^{\frac{p(y)}{p_{-}}}\,d\mu(y)\right)^{\frac{p_-}{q_{jk}}}\right)^{q(x)}\,d\mu(x).
\intertext{ Since $q_{jk}\geq
p_{jk}\geq p_{-}$ and $\gamma_{1}\leq\gamma_1^{\frac{p_-}{q_{jk}}}$,
  we have that}
&\qquad \qquad \leq
\underset{k,j}{\sum}\int_{E^{k}_{j}\setminus \Omega_{\infty}^{\qq}}
\mu(Q^{k}_{j})^{-p_{-}}
\left(\gamma_1 \int_{Q^{k}_{j}}f_1(y)^{\frac{p(y)}{p_{-}}}\,d\mu(y)\right)^{q(x) \frac{p_-}{q_{jk}}}\,d\mu(x).
\intertext{ Since $f_1\geq1$ or $f_1=0$, $\supp(f_1)\subset X\setminus
  \Omega_{\infty}^{\pp}$ (up to a set of measure zero),
  $\rho_{\pp}(f_1)\leq1$ and $p_{-}>1$.  Hence, the quantity inside the parentheses is less 
than or equal to $1$. Since $q(x)\geq q_{jk}$, we get}
&\qquad \qquad \leq
\underset{k,j}{\sum}\int_{E^{k}_{j}\setminus \Omega_{\infty}^{\qq}}
\mu(Q^{k}_{j})^{-p_{-}}
\left(\gamma_1 \int_{Q^{k}_{j}}f_1(y)^{\frac{p(y)}{p_{-}}}\,d\mu(y)\right)^{p_{-}}\,d\mu(x)\\
&\qquad \qquad =
\underset{k,j}{\sum}\int_{E^{k}_{j}\setminus \Omega_{\infty}^{\qq}}
\left(\gamma_1 \avgint_{Q^{k}_{j}}f_1(y)^{\frac{p(y)}{p_{-}}}\,d\mu(y)\right)^{p_{-}}\,d\mu(x)\\
&\qquad \qquad \leq
\int_{X\setminus \Omega_{\infty}^\qq} \gamma_{1}^{p_{-}}
M^{\D}(f_{1}(\cdot)^{\frac{p(\cdot)}{p_{-}}})(x)^{p_{-}}\,d\mu(x).
\intertext{ Since $p_{-}>1$,  $M^{\D}$ is bounded on $L^{p_{-}}$ and
  there exists a constant $C=C(p_-,X)$ such that }
&\qquad \qquad \leq
\gamma_{1}^{p_{-}}C\int_{X\setminus \Omega_\infty^\pp} 
f_{1}(x)^{p(x)}\,d\mu(x)\\
&\qquad \qquad \leq
\gamma_{1}^{p_{-}}C \rho_\pp(f_1) \\
&\qquad \qquad \leq
\gamma_{1}^{p_{-}}C.
\end{align*}
Thus, inequality~\eqref{main subsection 1 equation 1} holds if we 
choose $0 < \gamma_1 \leq 2 C^{-1/p_-}$.

\medskip

We now prove inequality (\ref{main subsection 1 equation 2}).  We will
show that 
\[
\frac{1}{2} \|M^{\D}_{\eta} f_1\|_{L^{\infty}(\Omega_{\infty}^{\qq})}\leq
C;
\]
this gives us inequality \eqref{main subsection 1 equation 2} 
if we choose $\alpha_1,\,\beta_1,\,\gamma_1$ such that 
$0<\alpha_1\beta_1\gamma_1<1/(4 \max\{1,C\})$.

Without loss of generality we may assume that $\|M^{\D}_{\eta} f_1\|_{L^{\infty}(\Omega_{\infty}^{\qq})}>0$. 
By definition there 
exists $x\in \Omega_{\infty}^{\qq}$ and a dyadic cube $Q$ such that $x\in Q$ and
\[
\frac{1}{2} \|M^{\D}_{\eta} f_1\|_{L^{\infty}(\Omega_{\infty}^{\qq})}<
\mu(Q)^{\eta}\avgint_{Q}f_1\,d\mu.
\]
Since $f_1=0$ almost everywhere on $\Omega_{\infty}^{\pp}$ it follows that 
$\mu(Q\setminus \Omega_{\infty}^{\pp})>0$. This implies that
$1<p_-(Q)<+\infty$. We consider three cases:
\begin{enumerate}
\item $p_-(Q)=1/\eta$ and $0<\eta<1$,
\item $1<p_-(Q)<1/\eta$ and  $0<\eta<1$,  
\item $1<p_-(Q)<+\infty$ and  $\eta=0$.
\end{enumerate}
If $(1)$ holds, an application of H\"older's inequality with respect to the 
exponent $1/\eta$, together with the facts that $f_1$ is either $0$ or greater than $1$,  
$\supp(f_1)\subset X\setminus \Omega_{\infty}^{\pp}$ (up to a set of
measure zero) and $\rho_\pp(f_1)\leq 1$, gives 
\[
\frac{1}{2}\|M^{\D}_{\eta} f_1\|_{L^{\infty}(\Omega_{\infty}^{\qq})}
\leq \mu(Q)^{\frac{1}{p_-(Q)}}\left(\avgint_Q f_1^{p_-(Q)}\,d\mu\right)
^{\frac{1}{p_-(Q)}}
\leq \left(\avgint_Q f_1^{p(x)}\,d\mu\right)
^{\frac{1}{p_-(Q)}}
\leq 1.
\]

If $(2)$ holds, then by Lemma \ref{prelim lemma 10} and arguing as in the first case, we have that 
\begin{multline}
\frac{1}{2}\|M^{\D}_{\eta} f_1\|_{L^{\infty}(\Omega_{\infty}^{\qq})}
< \mu(Q)^{\eta}\avgint_{Q}f_1\,d\mu \\
 \leq\left(\int_Qf_{1}^{p_-(Q)}\,d\mu\right)^{\frac{1}{p_-(Q)}-\frac{1}{q_-(Q)}}
\left(\avgint_{Q}f_1\,d\mu\right)^{\frac{p_-(Q)}{q_-(Q)}} \\
 \leq \left(\avgint_{Q}f_1^{p_-(Q)}\,d\mu\right)^{\frac{1}{q_-(Q)}} 
 \leq \left(\frac{1}{\mu(Q)}\right)^{\frac{1}{q_-(Q)}}.  \label{eqn:infty-bound}
\end{multline}

Finally, when $(3)$ holds we get the same estimate as in Case $(2)$;
to prove it simply 
replace $M^{\D}_{\eta}$ with $M^{\D}$ and $q_-(Q)$ with $p_-(Q)$ and
argue as before.  

\medskip

To complete the proof we must show that the last term in~\eqref{eqn:infty-bound} is uniformly
bounded. If $\mu(Q)\geq1$, this is immediate.  Now assume
$\mu(Q)<1$ and let $m$ be an arbitrary positive integer. Since $1/\qq \in LH$ and $x\in \Omega_{\infty}^{\qq}$ it follows
that the set
\[
\left\{y\in X:\frac{1}{q(y)}<\frac{1}{m}\right\}
\]
is non-empty and open. Thus there exists a $\tau>0$ such that the ball $B(x,\tau)$ is 
contained in it.

Let the constants $C$ and $\delta$ be the same as in Theorem \ref{prelim theorem 1}. 
Choose an integer $k$ such that $2 A_0 C \delta^{k}<\tau$. Since the dyadic cubes in the $k$-th 
generation cover $X$ it follows that there exists a dyadic cube $\widetilde{Q}$ in the $k$-th generation
which contains $x$.  By choosing $k$ large enough we may assume that
$\widetilde{Q}\subset Q$.  Then by the quasi-triangle inequality, 

\[
x\in \widetilde{Q}\subseteq B(x_c(\widetilde{Q}),C \delta^{k})
\subseteq B(x,2 A_0 C \delta^{k})\subseteq
B(x,\tau);
\] 
in other words,
\[
\widetilde{Q}\subseteq
\left\{y\in X:\frac{1}{q(y)}<\frac{1}{m}\right\}.
\]
Hence, 
$q_+(Q)\geq q_+(\widetilde{Q})\geq m$.
Since the positive integer $m$ was arbitrary, it follows that $q_+(Q)=+\infty$.

Now let $n$ be an arbitrary positive integer such that
$q_-(Q)<n$. Since $q_+(Q)=+\infty$, 
there exists a point $x_n\in Q$ such that 
$q_-(Q)<n<q(x_n)<q_+(Q)=+\infty$. Because $1/\qq\in LH$ , by 
Corollary \ref{prelim corollary 1}, 
\[
\mu(Q)^{\frac{1}{q(x_n)}-\frac{1}{q_-(Q)}}\leq C.
\] 
Therefore, since $\mu(Q)<1$,  we have that 
\[
\frac{1}{2} \|M^{\D}_{\eta} f_1\|_{L^{\infty}(\Omega_{\infty}^{\qq})}\leq
C \left(\frac{1}{\mu(Q)}\right)^{\frac{1}{q(x_n)}} 
\leq C \left(\frac{1}{\mu(Q)}\right)^{\frac{1}{n}}.
\]
If we take the limit as $n\rightarrow\infty$, we get the desired
bound.  This completes the proof of estimate~\eqref{main subsection 1 equation 2}.

\subsection*{The estimate for $f_2$}\label{main subsection 2}

Recall that the function $f_2$ has bounded support, $0\leq f_2\leq1$ and $\rho_{\pp}(f_2)\leq1$.
We want to prove that there exists $\lambda_2>1$ such that 
$\rho_{\qq}(\lambda_{2}^{-1}M^{\D}_{\eta} f_2)\leq1$.   Let
$\alpha_2=\lambda_2^{-1}$.  We will
show that there exists $0<\alpha_2<1$ such that 

\begin{equation}\label{main subsection 2 equation 1}
\int_{X\setminus \Omega^{\qq}_{\infty}}(\alpha_2 M^{\D}_{\eta} f_2(x))^{q(x)}\,d\mu(x)
\leq\frac{1}{2}
\end{equation}
and
\begin{equation}\label{main subsection 2 equation 2}
\alpha_2 \|M^{\D}_{\eta} f_2\|_{L^{\infty}(\Omega^{\qq}_{\infty})}\leq\frac{1}{2}.
\end{equation}

To prove both of these inequalities we first give
a pointwise estimate
for $M^{\D}_{\eta} f_2$. Let $x\in X$ and let $Q$ be any dyadic cube
containing $x$. If 
$\mu(Q\bigcap \Omega_{\infty}^{\pp})>0$, then $\eta=0$ because $p_+(Q)=+\infty$ and  
$p_+(Q)\leq 1/\eta$. Hence,
\[ 
\mu(Q)^{\eta} \avgint_{Q}f_2(y)\,d\mu(y)=
\avgint_{Q}f_2(y)\,d\mu(y)\leq1.
\]

If $\mu(Q\bigcap \Omega_{\infty}^{\pp})=0$ and $\eta=0$,  it is
immediate that the same
inequality holds.   Now suppose that
$\mu(Q\bigcap \Omega_{\infty}^{\pp})=0$ and $0<\eta<1$. In this case H\"{o}lder's 
inequality with respect to the exponent $1/\eta$ gives
\begin{multline*}
\left(\mu(Q)^{\eta} \avgint_{Q}f_2(y)\,d\mu(y)\right)^{1/\eta}
\leq \int_{Q}f_{2}(y)^{1/\eta}\,d\mu(y)\\
=\int_{Q\setminus \Omega_{\infty}^{\pp}}f_{2}(y)^{1/\eta}\,d\mu(y)
\leq \int_{Q\setminus \Omega_{\infty}^{\pp}}f_{2}(y)^{p(y)}\,d\mu(y)
\leq 1.
\end{multline*}
The last inequalities hold since $p(y)\leq p_+\leq 1/\eta$, $f_2\leq1$
and $\rho_{\pp}(f_2)\leq1$. Therefore, in every case the fractional
average of $f_2$ is at most $1$ and so $M^{\D}_{\eta} f_2(x)\leq1$.

\medskip

Given this estimate, we can immediately prove (\ref{main subsection 2
  equation 2}):   choose $0<\alpha_2\leq 1/2$; then 
\[ \alpha_2 \|M^{\D}_{\eta}
f_2\|_{L^{\infty}(\Omega_{\infty}^{\qq})}\leq\frac{1}{2}. \]

\medskip

To prove \eqref{main subsection 2  equation 1} we will consider two
cases: $q_\infty<+\infty$ and $q_\infty=+\infty$.
First suppose that $q_\infty<+\infty$. 
Since $1/\qq\in LH$, by Lemma~\ref{prelim lemma 6} we get
\begin{multline}\label{main subsection 2 equation 5}
\int_{X\setminus \Omega_{\infty}^{\qq}}(\alpha_2 M^{\D}_{\eta} f_2(x))^{q(x)}\,d\mu(x)=
\int_{X\setminus \Omega_{\infty}^{\qq}}((\alpha_2 M^{\D}_{\eta} f_2(x))^{q(x)q_\infty})^{\frac{1}{q_\infty}}\,d\mu(x)\\
\leq
e^{N C_\infty}\int_{X\setminus \Omega_{\infty}^{\qq}}(\alpha_2 M^{\D}_{\eta} f_2(x))^{q_\infty}\,d\mu(x)+
\int_{X\setminus \Omega_{\infty}^{\qq}}R(x)^{\frac{1}{q_\infty}}\,d\mu(x),
\end{multline}
where $R(x)=(e+d(x,x_0))^{-N}$.   By Lemma \ref{prelim lemma 5} and the dominated convergence 
theorem we may choose $N$ large enough that 
\begin{equation}\label{main subsection 2 equation 6}
\int_{X}R(x)^{\frac{1}{q_\infty}}\,d\mu(x)\leq\frac{1}{4}\qquad \mbox{and} \qquad  
\int_{X}R(x)^{\frac{1}{p_\infty}}\,d\mu(x)\leq\frac{1}{4}.
\end{equation}

Since $p_\infty\geq p_->1$ it follows from Remark \ref{prelim remark 2} that 
$M^{\D}_{\eta}:L^{p_\infty}(X)\rightarrow L^{q_\infty}(X)$
is a bounded operator.  Thus, there is a constant $C=C(p_\infty,\eta)$ such that
\begin{equation}\label{main subsection 2 equation 7}
\int_X M^{\D}_{\eta} f_2(x)^{q_\infty}\,d\mu(x)\leq
C \left(\int_Xf_{2}(x)^{p_\infty}\,d\mu(x)\right)^{\frac{q_\infty}{ p_\infty}}.
\end{equation}
If we combine (\ref{main subsection 2 equation 5}), 
(\ref{main subsection 2 equation 6}) and (\ref{main subsection 2
  equation 7}),  we get
\begin{equation}\label{main subsection 2 equation 8}
\int_{X\setminus \Omega_{\infty}^{\qq}}(\alpha_2 M^{\D}_{\eta} f_2(x))^{q(x)}\,d\mu(x)\leq
e^{N C_\infty}C \left(\int_X(\alpha_2 f_2(x))^{p_\infty}\,d\mu(x)\right)^{\frac{q_\infty}{ p_\infty}}
+\frac{1}{4}.
\end{equation}

To estimate the integral on the righthand side, we divide it into two
pieces.  Since $0\leq f_2\leq1$, $\rho_{\pp}(f_2)\leq1$ and $1/\pp \in LH$, by
Lemma~\ref{prelim lemma 6} and (\ref{main subsection 2 equation 6}) we
have that
\begin{align}\label{main subsection 2 equation 9}
& \int_{X\setminus \Omega_{\infty}^{\pp}}(\alpha_2
  f_2(x))^{p_\infty}\,d\mu(x) \\
& \qquad =
\alpha_{2}^{p_\infty}\int_{X\setminus
  \Omega_{\infty}^{\pp}}(f_2(x)^{p_\infty
  p(x)})^{\frac{1}{p(x)}}\,d\mu(x) \notag \\
& \qquad \leq
\alpha_{2}^{p_\infty}e^{N C_\infty}\int_{X\setminus \Omega_{\infty}^{\pp}}f_{2}(x)^{p(x)}\,d\mu(x)+
\alpha_{2}^{p_\infty}e^{N C_\infty}\int_{X\setminus
  \Omega_{\infty}^{\pp}}R(x)^{\frac{1}{p_\infty}}\,d\mu(x) \notag \\
& \qquad \leq
\frac{5}{4}\alpha_{2}^{p_\infty}e^{N C_\infty}. \notag
\end{align}

On the other hand, $p_\infty\geq C_{\infty}^{-1} \log(e+d(x,x_0))$ for every
$x\in \Omega_{\infty}^{\pp}$ because $1/\pp\in LH$.
Since $0<\alpha_2<1$ and $0\leq f_2 \leq 1$ we have that
\begin{equation}\label{main subsection 2 equation 10}
\int_{\Omega_{\infty}^{\pp}}(\alpha_2 f_2(x))^{p_\infty}\,d\mu(x)\leq
\int_{X}\alpha_{2}^{C_{\infty}^{-1} \log(e+d(x,x_0))}\,d\mu(x).
\end{equation}

If we combine (\ref{main subsection 2 equation 8}), (\ref{main subsection 2 equation 9}) 
and (\ref{main subsection 2 equation 10}), 
we get
\begin{multline*}
\int_{X\setminus \Omega_{\infty}^{\qq}}(\alpha_2 M^{\D}_{\eta} f_2(x))^{q(x)}\,d\mu(x)\\
\leq
e^{N C_\infty}C
\left(\frac{5}{4} \alpha_{2}^{p_\infty} e^{N C_\infty}+\int_{X}\alpha_{2}^{C_{\infty}^{-1} \log(e+d(x,x_0))}\,d\mu(x)\right)^{q_\infty / p_\infty}+\frac{1}{4}.
\end{multline*}
By Remark \ref{prelim remark 1} the integral on the righthand side 
can be made arbitrarily small by choosing $\alpha_2$ sufficiently
small.  In particular, if we choose  $\alpha_2$ such that
\[ 0<\alpha_2\leq\left(\frac{2}{5 e^{N C_\infty}}\right)^{\frac{1}{p_\infty}}
\left(\frac{1}{4 e^{N C_\infty} C}\right)^{\frac{1}{q_\infty}}
\]
and
\[
\int_{X}\alpha_{2}^{C_{\infty}^{-1} \log(e+d(x,x_0))}\,d\mu(x)\leq
\frac{1}{2} \left(\frac{1}{4 e^{N C_\infty} C}\right)^{\frac{p_\infty}{ q_\infty}},
\]
then we get inequality (\ref{main subsection 2 equation 1}).

\medskip

Finally, suppose  $q_\infty=+\infty$. In this case, by Remark \ref{prelim remark 1} 
we can choose $\alpha_2$ sufficiently small so that 
\[ 
\int_{X\setminus \Omega_{\infty}^{\qq}}(\alpha_2 M^{\D}_{\eta} f_2(x))^{q(x)}\,d\mu(x)\leq
\int_{X}\alpha_{2}^{C_{\infty}^{-1} \log(e+d(x,x_0))}\,d\mu(x)\leq\frac{1}{2},
\]
because $0<\alpha_2<1$, $0 \leq M^{\D}_{\eta} f_2 \leq 1$ and $1/\qq\in LH$.
This completes the proof.

\bigskip

\subsection*{Spaces with finite measure}
We  now consider the case when $\mu(X) < +\infty$. By Lemma
\ref{prelim finite measure}, $\diam(X) < +\infty$.  Hence, by
Corollary \ref{prelim corollary 1}, 
$1/\pp \in LH_0$ implies that $1/\pp \in LH$.  Therefore, by Lemma
\ref{prelim lemma 7} $1/\pp$
satisfies a Diening type estimate. Thus, the proof of inequality 
(\ref{main subsection 1 equation 2}) for $f_1$  carries over without any changes.
The proof of  (\ref{main subsection 1 equation 1}) must be modified to
let us apply Lemma~\ref{prelim lemma 3}.  We sketch the changes, using
the same notation as before. Let
$k_0$ be the smallest integer such that
\[ C^{k_0} > \mu(X)^\eta\avgint_X |f|\,d\mu. \]
Then for all all $k\geq k_0$ we can apply Lemma~\ref{prelim lemma 3}.
Therefore, we can modify the estimate immediately
before~\eqref{eqn:first}, replacing the righthand term by
\[ \sum_{k\geq k_0}\int_{(\Omega_{k}\setminus \Omega_{k+1})\setminus \Omega_{\infty}^{\qq}}(\alpha_1 \beta_1 \gamma_1 C^{k+1})^{q(x)}\,d\mu(x)
+ \int_{ (X\setminus \Omega_\infty^\qq)\setminus \Omega_{k_0}}
(\alpha_1\beta_1\gamma_1 C^{k_0})^{q(x)}\,d\mu(x). \]
The estimate for the first sum proceeds as in the original argument
except that we choose our constants so that it is less than $1/4$. 
To estimate the second integral, choose $\alpha_1<C^{-k_0}$; then it
is bounded by $ \beta_1\gamma_1\mu(X)$,
and by modifying our choices of $\beta_1$ and $\gamma_1$ we can make
this term smaller than $1/4$.

 The estimate for $f_2$ 
can be greatly simplified. Since $M_\eta^\D f_2(x)\leq 1$, for $0 < \alpha_2 < 1$ we get
\begin{multline*}
\int_{X\setminus \Omega_\infty^\qq} (\alpha_2 M_\eta^\D
f_2(x))^{q(x)}\,d\mu(x) +\alpha_2 \|M_\eta^\D
f_2\|_{L^\infty(\Omega_\infty^\qq)} \\
\leq \alpha_2^{q_-} \mu(X) + \alpha_2 \leq \alpha_2 (\mu(X) + 1).
\end{multline*}
Therefore, by choosing $\alpha_2$ sufficiently small we get 
$\rho_\qq(\alpha_2M_\eta^D f_2)\leq 1$. 

\bigskip

\section{Weak type inequalities}
\label{section:weak-type}

In this section we state and prove Theorem~\ref{weak theorem 1}; we
omit those details which are similar to the proof 
of Theorem~\ref{main theorem 1}. As before, we will first
consider the case when $\mu(X)=\infty$, and we will describe the
changes to the proof which are needed when $\mu(X)<\infty$ at the end of the
section.

We begin the proof with the same reductions as before:  it will
suffice to prove inequality~\eqref{weak equation 1} with $M_\eta$
replaced by $M_\eta^\D$ for a fixed dyadic grid $\D$, and  for a function $f$
that is non-negative, bounded and with bounded support, and such that
$\|f\|_\pp=1$.      We will write $f = f_1 + f_2$,  where $f_1 = f \chi_{\{f>1\}}$ and $f_2 = f \chi_{\{f\leq 1\}}$. 
Then for $i = 1 , 2$, the functions $f_i$ satisfy 
$\rho_{\pp}(f_i) \leq \rho_{\pp}(f) \leq \|f\|_\pp = 1$. 
By the definition of the variable Lebesgue norm and because
\[
\underset{t > 0}{\sup}\,t\|\chi_{\{M^{\D}_{\eta} f > t\}}\|_{\qq}\,\leq\,
2\left(\underset{t > 0}{\sup}\,t\|\chi_{\{M^{\D}_{\eta} f_1 > t\}}\|_{\qq} 
+ \underset{t > 0}{\sup}\,t\|\chi_{\{M^{\D}_{\eta} f_2 > t\}}\|_{\qq}\right),
\]
it will suffice to find $\lambda_i>1$ such that
\begin{equation} \label{eqn:modular-eqn}
\underset{t > 0}{\sup}\,\rho_{\qq}(\lambda_{i}^{-1}t\chi_{\{M^{\D}_{\eta} f_i > t\}})\,\leq\,1.
\end{equation}
We will prove each inequality in turn.

\medskip

\subsection*{The estimate for $f_1$}
Let $\lambda_1=\beta_1\gamma_1$, $0<\beta_1,\,\gamma_1<1$.    To prove
the modular estimate for $f_1$ we will prove that for all $t>0$,
\begin{gather}
 \int_{X\setminus \Omega_{\infty}^{\qq}}(\beta_1 \gamma_1t\chi_{\{M^{\D}_{\eta} f_1 > t\}}(x))^{q(x)}
\,d\mu(x) \leq \frac{1}{2} \label{weak main estimate 3}\\
\intertext{and}
 \beta_1 \gamma_1 t \|\chi_{\{M^{\D}_{\eta} f_1 > t\}}\|_{L^{\infty}(\Omega_{\infty}^{\qq})} \leq \frac{1}{2}.
\label{weak main estimate 4}
\end{gather}

We first prove (\ref{weak main estimate 3}).   Assume that $\mu(\{M^{\D}_{\eta} f_1 > t\}) > 0$; 
otherwise there is nothing to prove. Since $f_1$ is bounded and has
bounded support, 
$f_1 \in L^{1}(X)$ and  so by Lemma \ref{prelim lemma 3},
\[ 
 \{M^{\D}_{\eta} f_1 > t\} = \underset{j}{\bigcup}\,Q_j, \] 
where the dyadic cubes $Q_j$ are disjoint and satisfy
\[  t < \mu(Q_j)^{\eta}\avgint_{Q_j}\,f_1(y)\,d\mu(y) \leq C t. \] 
Hence,
\begin{multline} \label{weak estimate 5}
 \int_{X\setminus \Omega_{\infty}^{\qq}}(\beta_1 \gamma_1t
  \chi_{\{M^{\D}_{\eta} f_1 > t\}}(x))^{q(x)}\,d\mu(x) \\
\leq 
\underset{j}{\sum}\int_{Q_{j}\setminus \Omega_{\infty}^{\qq}}
\bigg(\beta_1 \gamma_1\mu(Q_j)^{\eta}\avgint_{Q_j}f_1(y)\,d\mu(y)\bigg)^{q(x)}\,d\mu(x).  
\end{multline}

To estimate the integral inside the parentheses on the righthand side
of (\ref{weak estimate 5}) we will argue much as we did after
inequality~\eqref{eqn:first}.    We will consider the case $0 < \eta <
1$;  the case $\eta = 0$ is essentially the same, bearing in mind that when 
$\eta = 0$, $\pp = \qq$. 

Assume without loss of generality that 
$\mu(Q_j \setminus \Omega_{\infty}^{\qq}) > 0$ for every $j$; otherwise this term contributes
nothing to the sum. When $p_-(Q_j)>1$, by 
Lemma~\ref{prelim lemma 10} and an argument similar to that
between~\eqref{eqn:first} and~(\ref{main subsection 1 equation 11})
it follows that for every $x\in Q_j\setminus \Omega_{\infty}^{\qq}$,
\begin{multline}\label{weak estimate 6}
 \left(\beta_1 \gamma_1\mu(Q_j)^{\eta}\avgint_{Q_j}\,f_1(y)\,d\mu(y)\right)^{q(x)}\\
\leq\,
\left(\beta_1\mu(Q_j)^{-\frac{1}{q_-(Q_j)}}\right)^{q(x)}\,
\left(\gamma_1\int_{Q_j}\,f_{1}(y)^{p(y)}\,d\mu(y)\right)^{\frac{q(x)}{q_-(Q_j)}}.
\end{multline}
Since $1/\qq \in LH$, by 
Corollary~\ref{prelim corollary 1} we may choose $0 < \beta_1 < \min\{1,(1/C)\}$ so that the 
first term on the righthand side of~(\ref{weak estimate 6})
is dominated by $\mu(Q_j)^{-1}$. Since $\supp(f_1)\subset X\setminus \Omega_{\infty}^{\pp}$ up to 
a set of measure zero, the integral of $f_1(\cdot)^{\pp}$ on $Q_j$ is
dominated by $\rho_{\pp}(f_1)\leq 1$.  
Since $0 < \gamma_1 < 1$, the
second term on the righthand side of~(\ref{weak estimate 6})
is dominated by 
\[
\gamma_1\int_{Q_j}\,f_1(y)^{p(y)}\,d\mu(y),
\]
and hence,
\begin{equation}\label{weak estimate 11}
\left(\beta_1 \gamma_1 \mu(Q_j)^{\eta} \avgint_{Q_j}\,f_1(y)\,d\mu(y)\right)^{q(x)}\,\leq\,
\gamma_1 \avgint_{Q_j}\,f_1(y)^{p(y)}\,d\mu(y).
\end{equation}
A similar and simpler argument shows that \eqref{weak estimate 11}
also holds when $p_-(Q_j)=1$.

Therefore, if we combine all of these estimates with~ (\ref{weak estimate 5}), we 
get
\begin{align*}
 \int_{X\setminus \Omega_{\infty}^{\qq}}\,
(\beta_1 &\gamma_1t\chi_{\{M^{\D}_{\eta} f_1 > t\}}(x))^{q(x)}\,d\mu(x)\\
&\notag\leq\,
\underset{j}{\sum}\,\int_{Q_j\setminus \Omega_{\infty}^{\qq}}\,
\left(\gamma_1\avgint_{Q_j}\,f_1(y)^{p(y)}\,d\mu(y)\right)\,d\mu(x)\\
&\notag\leq\,
\underset{j}{\sum}\gamma_1\frac{\mu(Q_j\setminus \Omega_{\infty}^{\qq})}{\mu(Q_j)}
\int_{Q_j}\,f_1(y)^{p(y)}\,d\mu(y)\\
&\notag\leq\,
\gamma_1\int_{X\setminus \Omega_{\infty}^{\qq}}\,f_1(y)^{p(y)}\,d\mu(y)\\
&\notag\leq\,\gamma_1.
\end{align*}
Hence, if we fix $0 < \gamma_1 \leq 1/2$, then inequality
(\ref{weak main estimate 3}) holds with $\beta_1$ and $\gamma_1$
which are independent of $t$.

\medskip

We will now prove inequality (\ref{weak main estimate 4}). 
If 
$t > \|M^{\D}_{\eta} f_1\|_{L^{\infty}(\Omega_{\infty}^{\qq})},$
 then we have that
$\|\chi_{\{M^{\D}_{\eta} f_1 >
  t\}}\|_{L^{\infty}(\Omega_{\infty}^{\qq})} = 0$ and so there is nothing 
to prove.  On the other hand, if we fix $t \leq \|M^{\D}_{\eta}
f_1\|_{L^{\infty}(\Omega_{\infty}^{\qq})}$,  then 
\[
t\,\beta_1\,\gamma_1\,\|\chi_{\{M^{\D}_{\eta} f_1 > t\}}\|_{L^{\infty}(\Omega_{\infty}^{\qq})} \leq 
\beta_1\,\gamma_1\,\|M^{\D}_{\eta} f_1\|_{L^{\infty}(\Omega_{\infty}^{\qq})}.
\]

The argument we used to prove inequality (\ref{main subsection 1 equation 2}) did not depend 
on the fact that $q_- > 1$ and continues to hold with minor modifications
when $q_- = 1$. Thus, we can find constants $\beta_1$ and $\gamma_1$ such that inequality (\ref{weak main estimate 4})
holds for all $t$. 

\bigskip

\subsection*{The estimate for $f_2$}
To prove the modular estimate~\eqref{eqn:modular-eqn}, let
$\lambda_2^{-1}=\alpha_2$, $0<\alpha_2<1$.  We will show that we can
find $\alpha_2$ sufficiently small so that for all $t$, 
\begin{gather}
\int_{X\setminus \Omega_{\infty}^{\qq}}\,
(\alpha_2t\chi_{\{M^{\D}_{\eta} f_2 > t\}}(x))^{q(x)}\,d\mu(x) \leq 
\frac{1}{2}
 \label{weak main estimate 5}\\
\intertext{and}
\alpha_2t\|\chi_{\{M^{\D}_{\eta} f_2 > t\}}\|_{L^{\infty}(\Omega_{\infty}^{\qq})} \leq 
\frac{1}{2}.\label{weak main estimate 6}
\end{gather}

We begin with two observations.  First, 
in Section \ref{main subsection 2} we showed that $M^{\D}_{\eta}
f_2(\cdot) \leq 1$, and the proof did not rely on the fact that $p_- > 1$. Second,
since $f_2$ is bounded and has bounded support, $f_2 \in L^{p}(X)$ for
all $p>1$, and so by the weak $(p,q)$ inequality for $M_\eta^\D$ (see Remark~\ref{prelim remark 2}),
$\mu(\{M^{\D}_{\eta} f_2 > t\}) < +\infty$ for all $t > 0$.

We first prove inequality (\ref{weak main estimate 5}). There are two cases:
$q_\infty < +\infty$ and $q_\infty = +\infty$. First assume that 
$q_\infty < +\infty$.   Since $M^{\D}_{\eta} f_2(\cdot) \leq 1$, we
may assume $t < 1$ since otherwise there is nothing to prove.  Then by
Lemma \ref{prelim lemma 6} we have that
\begin{align}
&\int_{X\setminus \Omega_{\infty}^{\qq}}\,
(\alpha_2t\chi_{\{M^{\D}_{\eta} f_2 > t\}}(x))^{q(x)}\,d\mu(x) \label{weak estimate 9}\\
&\notag \qquad \leq
e^{N C_{\infty}}\int_{X\setminus \Omega_{\infty}^{\qq}}\,
(\alpha_2t\chi_{\{M^{\D}_{\eta} f_2 > t\}}(x))^{q_\infty}\,d\mu(x)\,+\,
\int_{X}\,R(x)^{\frac{1}{q_\infty}}\,d\mu(x)\\
&\notag\qquad \leq
e^{N C_\infty}\,\alpha_{2}^{q_\infty}\,t^{q_\infty}\,\mu(\{M^{\D}_{\eta} f_2 > t\})\,+\,
\int_{X}\,R(x)^{\frac{1}{q_\infty}}\,d\mu(x),
\end{align}
where $R(x) = (e + d(x , x_0))^{-N}$. By Lemma \ref{prelim lemma 5} and the dominated convergence 
theorem we can choose $N$ large enough so that 
\begin{equation}\label{weak estimate 10}
\int_{X}\,R(x)^{\frac{1}{q_\infty}}\,d\mu(x) \leq \frac{1}{4}\qquad 
\mbox{and}\qquad
\int_{X}\,R(x)^{\frac{1}{p_\infty}}\,d\mu(x) \leq \frac{1}{4}.
\end{equation}
Again by Remark \ref{prelim remark 2} we know that $M^{\D}_{\eta}
:L^{p_\infty}(X)\rightarrow L^{q_\infty,\infty}(X)$, If we combine
this fact with inequalities~(\ref{weak estimate 9}) and~(\ref{weak estimate 10}), and then repeat the
argument used to prove  (\ref{main subsection 2 equation 9}) and
(\ref{main subsection 2 equation 10}) (which holds without change), we
get
\begin{align*}
& \int_{X\setminus \Omega_{\infty}^{\qq}}\,
(\alpha_2t\chi_{\{M^{\D}_{\eta} f_2>t\}}(x))^{q(x)}\,d\mu(x) \\
& \qquad \leq
e^{N C_\infty}\,C(p_\infty, X)\,
\left(\int_{X}\,(\alpha_2\,f_2(y))^{p_\infty}\,d\mu(y)\right)^{\frac{q_\infty}{p_\infty}}\,+\,
\frac{1}{4} \\
& \qquad \leq
e^{N C_\infty}C(p_\infty,X)
\left(\frac{5}{4} \alpha_{2}^{p_\infty} e^{N C_\infty}+\int_{X}\alpha_{2}^{C_{\infty}^{-1} \log(e+d(x,x_0))}\,d\mu(x)\right)^{\frac{q_\infty}{ p_\infty}}+\frac{1}{4}.
\end{align*}

By Remark \ref{prelim remark 1} the integral on the righthand side 
can be made arbitrarily small by choosing $\alpha_2$ sufficiently
small. Therefore, we can choose  $0 < \alpha_2 < 1$ such that
inequality (\ref{weak main estimate 5}) holds for all $t$.

Now suppose $q_\infty = +\infty$. In this case, since $1/\qq \in LH$, $0<t<1$ and  
$0\leq \chi_{\{M^{\D}_{\eta} f_2>t\}}(\cdot)\leq1$, by Remark \ref{prelim remark 1} 
we can choose $0 < \alpha_2 < 1$ sufficiently small so that for all $t>0$,
\[ 
\int_{X\setminus \Omega_{\infty}^{\qq}}(\alpha_2\,t\,\chi_{\{M^{\D}_{\eta} f_2 > t\}}(x))^{q(x)}\,d\mu(x)\leq
\int_{X}\alpha_{2}^{C_{\infty}^{-1} \log(e+d(x,x_0))}\,d\mu(x)\leq\frac{1}{2}.
\]

\medskip

We now prove (\ref{weak main estimate 6}).   If $t \geq 1$ there is
nothing to prove since the lefthand side is 0.  On the other hand, 
if $t < 1$, then by choosing $0 < \alpha_2 \leq 1/2$ we get 
\[ \alpha_2\,t\,\|\chi_{\{M^{\D}_{\eta} f_2 >
  t\}}\|_{L^{\infty}(\Omega_{\infty}^{\qq})} \leq \alpha_2 \leq
1/2, \]
and this establishes inequality (\ref{weak main estimate 6}) for all 
$t>0$.  This completes the estimate for $f_2$ and also the proof of
Theorem~\ref{weak theorem 1}.  

\bigskip

\subsection*{Spaces with finite measure}

Finally we consider the case when $\mu(X) < +\infty$. By Lemma
\ref{prelim finite measure} and Corollary~\ref{prelim corollary 1},
$1/\pp \in LH$, and so by Lemma \ref{prelim lemma 7} $1/\pp$ satisfies
a Diening type condition. Therefore, the estimate for $f_1$ is
unchanged when $t>\mu(X)^\eta \avgint_X |f|\,d\mu$.  When $t$ is
smaller than this bound we choose $\beta_1$ accordingly and bound the
lefthand side of~\eqref{weak main estimate 3} by $\gamma_1\mu(X)$; by
the appropriate choice of $\gamma_1$ this can be made as small as desired.

 The estimate for $f_2$ can be
greatly simplified.  Since $\mu(\{M^{\D}_{\eta} f_2 > t\}) > 0$ only
when $t < 1$,  we have that
\begin{multline*}
\int_{X\setminus \Omega_{\infty}^{\qq}}
(\alpha_2\,t\,\chi_{\{M^{\D}_{\eta} f_2 > t\}}(x))^{q(x)}\,d\mu(x)\,
+\,\alpha_2\,t\,\|\chi_{\{M^{\D}_{\eta} f_2 > t\}}\|_{L^{\infty}(\Omega_{\infty}^{\qq})}\\
\leq\,\alpha_{2}^{q_-}\,\mu(X)\,+\,\alpha_2\,
\leq\,\alpha_2(\mu(X) + 1),
\end{multline*}
if $0<\alpha_2<1$.
Hence, if we choose $\alpha_2$ sufficiently small, we get the desired inequality.

\section{Fractional integral operators}
\label{section:fractional}

In this section we prove Theorems~\ref{frac int theorem 1}
and~\ref{frac int theorem 2}. This requires a pointwise estimate
that relates the fractional integral operator to the fractional maximal
operator. We prove this estimate in Proposition~\ref{Welland}.  Given this
inequality, the actual proofs follow exactly as they do in the
Euclidean case, and we refer the reader to \cite{MR2414490} for
complete details.

\begin{prop}\label{Welland}
Let $(X,d,\mu)$ be a reverse doubling space. Given $0 < \eta < 1$,  fix $\varepsilon$, 
$0 < \varepsilon < \min \{\eta , 1-\eta\}$. Then there exists a constant $C=C(\eta , \varepsilon,X)$ such that
for every $f \in L^{1}_{loc}(X)$ and for every $x \in X$,
\[
|I_{\eta} f(x)| \leq C\,(M_{\eta-\varepsilon} f(x) M_{\eta+\varepsilon} f(x))^{\frac{1}{2}}.
\]
\end{prop}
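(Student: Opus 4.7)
My plan is to prove a two-parameter bound of the form
\[
|I_\eta f(x)| \leq C\,\mu(B(x,r))^{\varepsilon} M_{\eta-\varepsilon}f(x) + C\,\mu(B(x,r))^{-\varepsilon} M_{\eta+\varepsilon}f(x)
\]
for every $r>0$, and then optimize in $r$. To set it up I would fix $x$ and split $I_\eta f(x)$ as the integral over the ball $B(x,r)$ (the \emph{local} part) plus the integral over $X\setminus B(x,r)$ (the \emph{outer} part). The two halves will be controlled by $M_{\eta-\varepsilon}$ and $M_{\eta+\varepsilon}$ respectively, the smaller index absorbing the singularity near $x$ and the larger index summing the decay at infinity.

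For the local part I would decompose into annuli $A_k = B(x,2^{-k}r)\setminus B(x,2^{-k-1}r)$ for $k\geq 0$. On $A_k$ we have $d(x,y) \geq 2^{-k-1}r$, so $\mu(B(x,d(x,y)))^{1-\eta} \geq \mu(B(x,2^{-k-1}r))^{1-\eta}$, which by doubling is comparable to $\mu(B(x,2^{-k}r))^{1-\eta}$. The resulting term is $\mu(B(x,2^{-k}r))^{\varepsilon}$ times a fractional average of $|f|$ over $B(x,2^{-k}r)$ with index $\eta-\varepsilon$, hence at most $\mu(B(x,2^{-k}r))^{\varepsilon} M_{\eta-\varepsilon}f(x)$. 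Reverse doubling gives $\mu(B(x,2^{-k}r)) \leq \gamma^{k}\mu(B(x,r))$ as long as the relevant balls are proper subsets of $X$ (which they are since they are contained in $B(x,r)$), so $\sum_{k\geq 0}\mu(B(x,2^{-k}r))^{\varepsilon} \leq C\mu(B(x,r))^{\varepsilon}$. The outer part is treated symmetrically using the expanding annuli $B(x,2^{k+1}r)\setminus B(x,2^{k}r)$: on each such annulus $\mu(B(x,d(x,y)))^{1-\eta} \geq \mu(B(x,2^{k}r))^{1-\eta}$, doubling absorbs the passage to $B(x,2^{k+1}r)$, and extracting $\mu(B(x,2^{k+1}r))^{-\varepsilon}$ yields an $M_{\eta+\varepsilon}f(x)$. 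Reverse doubling in its dual form, $\mu(B(x,2^{k+1}r)) \geq \gamma^{-k}\mu(B(x,2r))$, makes this second geometric series converge, with the sum terminating naturally as soon as some $B(x,2^{k}r)$ coincides with $X$.

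With the two-parameter inequality in hand, I would choose $r$ so that the two terms balance, i.e.\ $\mu(B(x,r))^{2\varepsilon} \approx M_{\eta+\varepsilon}f(x)/M_{\eta-\varepsilon}f(x)$. This yields the stated bound
\[
|I_\eta f(x)| \leq C\bigl(M_{\eta-\varepsilon}f(x)\,M_{\eta+\varepsilon}f(x)\bigr)^{1/2}.
\]
The main obstacle is that $r\mapsto \mu(B(x,r))$ is only nondecreasing and may have jumps, so we cannot solve the balancing equation exactly. This is where Lemma~\ref{lemma:no-atoms} enters: since reverse doubling spaces have no atoms, $\mu(B(x,r))\to 0$ as $r\to 0$, while $\mu(B(x,r))\to \mu(X)$ as $r\to\infty$; and doubling ensures that consecutive attainable values differ by at most a factor of $C_\mu$. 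Consequently, any target value in $(0,\mu(X))$ can be realized up to a fixed multiplicative constant, which is harmless. The genuinely degenerate cases are easy: if $M_{\eta+\varepsilon}f(x)=0$ then $f=0$ a.e.\ and the claim is trivial, if $M_{\eta-\varepsilon}f(x)=+\infty$ the claim is vacuous, and if $\mu(X)<+\infty$ and the optimal target exceeds $\mu(X)$ one simply takes $r$ so large that $B(x,r)=X$, in which case the outer part vanishes and the local estimate already suffices.
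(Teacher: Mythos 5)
Your proposal is correct and takes essentially the same route as the paper: split $I_\eta f(x)$ at a ball $B(x,r)$, sum over dyadic annuli using doubling to absorb ratio factors and reverse doubling to produce the geometric decay $\gamma^{k\varepsilon}$ in each half, extract $M_{\eta-\varepsilon}f(x)$ and $M_{\eta+\varepsilon}f(x)$ respectively, and then choose $r$ (using Lemma~\ref{lemma:no-atoms} and doubling to realize the balance value up to a factor $C_\mu$) so that $\mu(B(x,r))^{2\varepsilon}\approx M_{\eta+\varepsilon}f(x)/M_{\eta-\varepsilon}f(x)$. The one place your sketch is thinner than the paper is the finite-measure outer sum: the paper shows the balance target never exceeds $\mu(X)$ and deliberately halves it so that $B(x,\delta)\subsetneq X$, then shifts indices when applying reverse doubling because the large balls $B(x,2^i\delta)$ may coincide with $X$; your phrase about the sum ``terminating naturally'' is correct in spirit but needs this extra care to actually produce the geometric factor.
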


\begin{remark} 
It is only in the proof of this result that we use the assumption that $\mu$ is a reverse
doubling measure. We are not certain if Proposition~\ref{Welland} remains true
without this hypothesis.
\end{remark}

\begin{proof}
We first assume that $\mu(X) = +\infty$; we will describe the changes
to the proof when $\mu(X)<+\infty$ afterwards. Given 
$f \in L^{1}_{loc}(X)$ and $x \in X$ define

\begin{gather*}
 I_1 f(x)\,=\,\int_{B(x,\delta)}\,\frac{f(y)}{\mu(B(x,d(x,y)))^{1-\eta}}\,d\mu(y) \\
 I_2 f(x)\,=\,\int_{X\setminus B(x,\delta)}\,\frac{f(y)}{\mu(B(x,d(x,y)))^{1-\eta}}\,d\mu(y),
\end{gather*}
where the precise value of $\delta>0$ will be fixed below.  Clearly
$|I_\eta f(x)|\leq |I_1f(x)|+|I_2f(x)|$ and so we will estimate each term
on the righthand side separately.  

\medskip

We first estimate $|I_1 f(x)|$. For $i \geq 0$ define 
\[ R_i=\{y\in X\,:\,2^{-i-1}\delta \leq d(x,y) <
2^{-i}\delta\}. \]
Since the measure $\mu$ is both doubling and reverse doubling, we have that
\begin{align}
& \notag |I_1 f(x)| \\
& \notag \qquad \leq
\sum_{i\geq 0}
  \int_{R_i}\frac{|f(y)|}{\mu(B(x,d(x,y)))^{1-\eta}}\,d\mu(y) \\
&\notag  \qquad \leq
\underset{i\geq 0}{\sum}\,
\left(\frac{\mu(B(x,2^{-i}\delta))}{\mu(B(x,2^{-i-1}\delta))}\right)^{1-\eta}\,
\frac{\mu(B(x,2^{-i}\delta))^{\varepsilon}}{\mu(B(x,2^{-i}\delta))^{1-\eta+\varepsilon}}\,
\int_{B(x,2^{-i}\delta)}\,|f(y)|\,d\mu(y)\\
&\notag \qquad \leq
\underset{i\geq 0}{\sum} (C C_\mu)^{1-\eta}\,(\gamma^{\varepsilon})^{i}\,\mu(B(x,\delta))^{\varepsilon}\,
M_{\eta-\varepsilon} f(x) \\
& \qquad \leq
\frac{(C C_\mu)^{1-\eta}}{1-\gamma^{\varepsilon}}\,\mu(B(x,\delta))^{\varepsilon}\,
M_{\eta-\varepsilon} f(x),\label{frac int I 1 estimate 2}
\end{align}
where the constant $C$ comes from the lower mass bound which is satisfied by a doubling measure.
The last inequality holds because $0<\gamma<1$ ensures that the geometric
series converges.

\medskip

We now estimate $|I_2 f(x)|$  in a similar fashion. For $i \geq 1$ we
now define
\[ R_i=\{y\in X : 2^{i-1}\delta \leq d(y,x) < 2^{i}\delta\}; \]
then essentially the same argument shows that
\begin{align}
& |I_2 f(x)| \notag \\
& \notag \qquad \leq
\underset{i\geq 1}{\sum}
\left(\frac{\mu(B(x,2^{i}\delta))}{\mu(B(x,2^{i-1}\delta))}\right)^{1-\eta}\,
\frac{\mu(B(x,2^{i}\delta))^{-\varepsilon}}{\mu(B(x,2^{i}\delta))^{1-\eta-\varepsilon}}\,
\int_{B(x,2^{i}\delta)}\,|f(y)|\,d\mu(y) \\
&\notag \qquad \leq
\underset{i\geq 0}{\sum}\,
(C C_\mu)^{1-\eta}\,\mu(B(x,\delta))^{-\varepsilon}\,M_{\eta + \varepsilon} f(x)\,
(\gamma^{\varepsilon})^{i} \\
&\qquad \leq \frac{(C C_\mu)^{1-\eta}}{1-\gamma^{\varepsilon}}\,\mu(B(x,\delta))^{-\varepsilon}\,
M_{\eta + \varepsilon} f(x).\label{frac int I 2 estimate 2}                
\end{align} 

\medskip

To complete the proof we will show that there exists $\delta>0$ such
that
\begin{equation}\label{frac int vol estimate 1}
\frac{1}{C_\mu}\,
\left(\frac{M_{\eta + \varepsilon} f(x)}{M_{\eta-\varepsilon} f(x)}\right)^{\frac{1}{2 \varepsilon}}
<
\mu(B(x,\delta)) \leq 
\left(\frac{M_{\eta + \varepsilon} f(x)}{M_{\eta-\varepsilon} f(x)}\right)^{\frac{1}{2 \varepsilon}}.
\end{equation} 
If we assume this for the moment and 
substitute (\ref{frac int vol estimate 1}) into inequalities (\ref{frac int I 1 estimate 2}) and
(\ref{frac int I 2 estimate 2}), we get

\[
|I_\eta f(x)| \leq C\,(M_{\eta+\varepsilon} f(x) M_{\eta-\varepsilon} f(x))^{\frac{1}{2}},
\]
which completes the proof when $\mu(X) = +\infty$.

\medskip

To prove~\eqref{frac int vol estimate 1} define the set
\[
\left\{r>0 :\mu(B(x,r)) \leq \left(\frac{M_{\eta + \varepsilon}
      f(x)}{M_{\eta-\varepsilon} f(x)}\right)^{\frac{1}{2
      \varepsilon}}\right\}.
\]
If this set were empty,  then because $\mu$ is non-trivial this would imply that
$\mu(B(x,r))$ is greater than the ratio on the righthand side. By choosing 
a sequence of radii such that $r_i>0$, $r_i>r_{i+1}$ and $r_i\rightarrow 0$ as $i\rightarrow \infty$, we
would get that $\mu(\{x\})>0$ and this contradicts Lemma \ref{lemma:no-atoms}. Therefore, the 
given set is non-empty. Furthermore, since we can exclude the trivial cases where either 
$f=0$ a.e. or $|f|=+\infty$ on a set of positive measure, it follows that the given set is 
bounded above.

Hence,
\[
\delta_0 =
\sup\bigg\{r\,>\,0\,:\,\mu(B(x,r)) \leq 
\left(\frac{M_{\eta + \varepsilon} f(x)}{M_{\eta-\varepsilon} f(x)}\right)^{\frac{1}{2 \varepsilon}}\bigg\}
\]
is well-defined.
Fix $\delta$ such that $\delta_0/2 < \delta < \delta_0$; then
\[ \mu(B(x,\delta)) \leq \left(\frac{M_{\eta + \varepsilon}
    f(x)}{M_{\eta-\varepsilon} f(x)}\right)^{\frac{1}{2 \varepsilon}}
< \mu(B(x,2\delta)) \leq C_\mu(B(x,\delta)), \]
and inequality~\eqref{frac int vol estimate 1} follows at once.

\bigskip

Now assume that $\mu(X) < +\infty$;  we again ignore the trivial cases where $f=0$ a.e. or 
$|f|=+\infty$ on a set of positive measure. 
Let $x\in X$ and let $B$ be an arbitrary ball which contains $x$; then
we have that
\[ \mu(B)^{\eta+\varepsilon} \avgint_{B}\,|f|\,d\mu \leq 
\mu(X)^{2\varepsilon} \mu(B)^{\eta-\varepsilon} \avgint_{B}\,|f|\,d\mu
\leq \mu(X)^{2\varepsilon} M_{\eta-\varepsilon} f(x). \]
If we take the supremum over all such balls, we get
\[
\left(\frac{M_{\eta+\varepsilon}f(x)}{M_{\eta-\varepsilon}f(x)}\right)^{\frac{1}{2\varepsilon}}
\leq \mu(X) < +\infty.
\]

Define the set 

\[
\left\{r>0 : \mu(B(x,r)) \leq \frac{1}{2}
\left(\frac{M_{\eta+\varepsilon}f(x)}{M_{\eta-\varepsilon}f(x)}\right)^{\frac{1}{2\varepsilon}}\right\}.
\]

If this set were empty, then $\mu(B(x,r))$ would be greater than the ratio on the righthand side for 
every $r>0$. By choosing radii $r_i$ such that $B(x,r_1)\subsetneq X$, $r_i>0$, 
$r_i>r_{i+1}$ and $r_i\rightarrow 0$ as $i\rightarrow \infty$ we get that $\mu(\{x\})>0$ which
contradicts Lemma~\ref{lemma:no-atoms}. Therefore, the set is non-empty and bounded above and 
its supremum, which we denote by $\delta_0$, is well-defined. Thus, for $0<\delta<\delta_0<2\delta$ we 
have

\[
\frac{1}{2C_\mu} 
\left(\frac{M_{\eta+\varepsilon}f(x)}{M_{\eta-\varepsilon}f(x)}\right)^{\frac{1}{2\varepsilon}}
\leq \mu(B(x,\delta))\leq
\frac{1}{2}
\left(\frac{M_{\eta+\varepsilon}f(x)}{M_{\eta-\varepsilon}f(x)}\right)^{\frac{1}{2\varepsilon}}.
\]

The upper bound on $\mu(B(x,\delta))$ ensures that $B(x,\delta)\subsetneq X$ and we may apply the 
reverse doubling condition to the balls $B(x,2^{-i}\delta)$ for $i\geq 0$. Thus the estimate for 
$I_1 f(x)$ remains unchanged. 

However, we need to be more careful with the estimate for 
$I_2 f(x)$ because this involves positive dilations of the ball $B(x,\delta)$ and the reverse doubling
condition only applies to balls which are strictly contained in $X$.
Let $k$ be the smallest integer such that
$2^{k}\delta>\diam(X)$.  If we define $R_i$ as before, then for $i>k$,
$R_i=\emptyset$.  Hence, estimating as before, we have that
\[ |I_2f(x)| \leq \sum_{i=1}^k 
\left(\frac{\mu(B(x,2^{i}\delta))}{\mu(B(x,2^{i-1}\delta))}\right)^{1-\eta}\,
\frac{\mu(B(x,2^{i}\delta))^{-\varepsilon}}{\mu(B(x,2^{i}\delta))^{1-\eta-\varepsilon}}\,
\int_{B(x,2^{i}\delta)}\,|f(y)|\,d\mu(y) \]

Let $a$ be the smallest integer such that $A_0\leq 2^a$.  Then for
$1\leq i \leq k$,
\[ 2^{i-a-4}\delta \leq (8A_0)^{-1} 2^{k-1}\delta \leq
(8A_0)^{-1}\diam(X). \]
In the proof of Lemma \ref{lemma:no-atoms} we showed that balls with radii strictly less than
$(8A_0)^{-1}\diam(X)$ are strictly contained in $X$,  and so the reverse doubling condition 
holds for the balls $B(x,2^{i-a-4}\delta)$. Hence,
\begin{multline*}
 \mu(B(x,2^i\delta))^{-\varepsilon} 
\leq \mu(B(x,2^{i-a-4}\delta))^{-\varepsilon} \\
\leq (\gamma^\varepsilon)^i \mu(B(x,2^{-a-4}\delta))^{-\varepsilon}
\leq C_\mu^{(a+4)\varepsilon} (\gamma^\varepsilon)^i \mu(B(x,\delta))^{-\varepsilon}. 
\end{multline*}
If we substitute this into the estimate for $I_2 f(x)$, we get
\[ |I_2f(x)|
 \leq \frac{C^{1-\eta}C_\mu^{(a+4)\varepsilon+1-\eta}}{1-\gamma^{\varepsilon}}\,\mu(B(x,\delta))^{-\varepsilon}\,
M_{\eta + \varepsilon} f(x), \]
where $C$ is the constant from the lower mass bound which is satisfied by a doubling measure. We may now
use the lower bound for $\mu(B(x,\delta))$ in order to complete the proof of Proposition
\ref{Welland} when $\mu(X)<+\infty$.

\end{proof}

\bibliographystyle{plain}
\bibliography{dcu_ps_2}

\def\cprime{$'$}
\begin{thebibliography}{10}

\bibitem{MR3322604}
T.~Adamowicz, P.~Harjulehto, and P.~H{\"a}st{\"o}.
\newblock Maximal operator in variable exponent {L}ebesgue spaces on unbounded
  quasimetric measure spaces.
\newblock {\em Math. Scand.}, 116(1):5--22, 2015.

\bibitem{MR2516251}
A.~Almeida and S.~Samko.
\newblock Fractional and hypersingular operators in variable exponent spaces on
  metric measure spaces.
\newblock {\em Mediterr. J. Math.}, 6(2):215--232, 2009.

\bibitem{MR3302574}
T.~C. Anderson, D.~Cruz-Uribe, and K.~Moen.
\newblock Logarithmic bump conditions for {C}alder\'on-{Z}ygmund operators on
  spaces of homogeneous type.
\newblock {\em Publ. Mat.}, 59(1):17--43, 2015.

\bibitem{MR2867756}
A.~Bj{\"o}rn and J.~Bj{\"o}rn.
\newblock {\em Nonlinear potential theory on metric spaces}, volume~17 of {\em
  EMS Tracts in Mathematics}.
\newblock European Mathematical Society (EMS), Z\"urich, 2011.

\bibitem{MR1430157}
M.~Bramanti and M.~C. Cerutti.
\newblock Commutators of singular integrals on homogeneous spaces.
\newblock {\em Boll. Un. Mat. Ital. B.(7)}, 10(4):843--883, 1996.

\bibitem{MR2414490}
C.~Capone, D.~Cruz-Uribe, and A.~Fiorenza.
\newblock The fractional maximal operator and fractional integrals on variable
  {$L^p$} spaces.
\newblock {\em Rev. Mat. Iberoam.}, 23(3):743--770, 2007.

\bibitem{MR1096400}
M.~Christ.
\newblock A {$T(b)$} theorem with remarks on analytic capacity and the {C}auchy
  integral.
\newblock {\em Colloq. Math.}, 60/61(2):601--628, 1990.

\bibitem{MR2493649}
D.~Cruz-Uribe, L.~Diening, and A.~Fiorenza.
\newblock A new proof of the boundedness of maximal operators on variable
  {L}ebesgue spaces.
\newblock {\em Boll. Unione Mat. Ital. (9)}, 2(1):151--173, 2009.

\bibitem{MR3026953}
D.~Cruz-Uribe and A.~Fiorenza.
\newblock {\em Variable {L}ebesgue spaces}.
\newblock Applied and Numerical Harmonic Analysis. Birkh\"auser/Springer,
  Heidelberg, 2013.
\newblock Foundations and harmonic analysis.

\bibitem{MR2385658}
G.~Di~Fazio, C.~E. Guti{\'e}rrez, and E.~Lanconelli.
\newblock Covering theorems, inequalities on metric spaces and applications to
  {PDE}'s.
\newblock {\em Math. Ann.}, 341(2):255--291, 2008.

\bibitem{DHHR}
L.~Diening, P.~Harjulehto, P.~H\"{a}st\"{o}, and M.~Ruzicka.
\newblock {\em Lebesgue and Sobolev spaces with variable exponents}.
\newblock Lecture Notes in Mathematics. Springer, 2011.

\bibitem{MR2248828}
T.~Futamura, Y.~Mizuta, and T.~Shimomura.
\newblock Sobolev embeddings for variable exponent {R}iesz potentials on metric
  spaces.
\newblock {\em Ann. Acad. Sci. Fenn. Math.}, 31(2):495--522, 2006.

\bibitem{garcia-cuerva-martell01}
J.~Garc{\'{\i}}a-Cuerva and J.~M Martell.
\newblock Two-weight norm inequalities for maximal operators and fractional
  integrals on non-homogeneous spaces.
\newblock {\em Indiana Univ. Math. J.}, 50(3):1241--1280, 2001.

\bibitem{MR2738962}
O.~Gorosito, G.~Pradolini, and O.~Salinas.
\newblock Boundedness of fractional operators in weighted variable exponent
  spaces with non doubling measures.
\newblock {\em Czechoslovak Math. J.}, 60(135)(4):1007--1023, 2010.

\bibitem{MR3183648}
L.~Grafakos, L.~Liu, D.~Maldonado, and D.~Yang.
\newblock Multilinear analysis on metric spaces.
\newblock {\em Dissertationes Math. (Rozprawy Mat.)}, 497:121, 2014.

\bibitem{MR1336257}
P.~Haj{\l}asz and P.~Koskela.
\newblock Sobolev meets {P}oincar\'e.
\newblock {\em C. R. Acad. Sci. Paris S\'er. I Math.}, 320(10):1211--1215,
  1995.

\bibitem{HHP}
P.~Harjulehto, P.~H{\"a}st{\"o}, and M.~Pere.
\newblock Variable exponent lebesgue spaces on metric spaces: The
  hardy--littlewood maximal operator.
\newblock {\em Real Anal. Exchange}, 30:87--104, 2004/2005.

\bibitem{MR2296640}
P.~Harjulehto, P.~H{\"a}st{\"o}, and M.~Pere.
\newblock Variable exponent {S}obolev spaces on metric measure spaces.
\newblock {\em Funct. Approx. Comment. Math.}, 36:79--94, 2006.

\bibitem{MR1800917}
J.~Heinonen.
\newblock {\em Lectures on analysis on metric spaces}.
\newblock Universitext. Springer-Verlag, New York, 2001.

\bibitem{MR2901199}
T.~Hyt{\"o}nen and A.~Kairema.
\newblock Systems of dyadic cubes in a doubling metric space.
\newblock {\em Colloq. Math.}, 126(1):1--33, 2012.

\bibitem{MR3058926}
A.~Kairema.
\newblock Two-weight norm inequalities for potential type and maximal operators
  in a metric space.
\newblock {\em Publ. Mat.}, 57(1):3--56, 2013.

\bibitem{MR3100969}
V.~Kokilashvili, A.~Meskhi, and M.~Sarwar.
\newblock Two-weight norm estimates for maximal and {C}alder\'on-{Z}ygmund
  operators in variable exponent {L}ebesgue spaces.
\newblock {\em Georgian Math. J.}, 20(3):547--572, 2013.

\bibitem{MR2387413}
V.~Kokilashvili and S.~Samko.
\newblock The maximal operator in weighted variable spaces on metric measure
  spaces.
\newblock {\em Proc. A. Razmadze Math. Inst.}, 144:137--144, 2007.

\bibitem{MR2569546}
T.~M{\"a}k{\"a}l{\"a}inen.
\newblock Adams inequality on metric measure spaces.
\newblock {\em Rev. Mat. Iberoam.}, 25(2):533--558, 2009.

\bibitem{sawyer-wheeden92}
E.~T. Sawyer and R.~L. Wheeden.
\newblock Weighted inequalities for fractional integrals on {E}uclidean and
  homogeneous spaces.
\newblock {\em Amer. J. Math.}, 114(4):813--874, 1992.

\bibitem{MR0369641}
G.~V. Welland.
\newblock Weighted norm inequalities for fractional integrals.
\newblock {\em Proc. Amer. Math. Soc.}, 51:143--148, 1975.

\end{thebibliography}

\end{document}